\definecolor{orange}{rgb}{1,0.5,0}
\DeclareMathAlphabet{\mathpzc}{OT1}{pzc}{L}{it} 
\def\vphi{\varphi}
\def\a{\alpha}
\newtheorem{definition}{Definition}[section]
\newtheorem{proposition}[definition]{Proposition}
\newtheorem{theorem}{Theorem}
\newtheorem{sublemma}{Sublemma}
\newtheorem{corollary}[definition]{Corollary}
\newtheorem{remark}[definition]{Remark}
\newtheorem{lemma}[definition]{Lemma}
\def\geq{\geqslant}
\def\leq{\leqslant}
\def\R{\mathbb{R}}
\def\T{\mathbb{T}}
\def\Z{\mathbb{Z}}
\def\N{\mathbb{N}}
\def\cB{\mathbb{B}}
\def\th{\theta}
\def\ve{\varepsilon}
\def\cC{\mathcal{C}}
\def\Aut{\operatorname{Aut}}
\newcommand{\bea}{\begin{eqnarray}}
  \newcommand{\eea}{\end{eqnarray}}
  \newcommand{\beab}{\begin{eqnarray*}}
  \newcommand{\eeab}{\end{eqnarray*}}
\renewcommand{\a}{\alpha}
  \newcommand{\be}{\begin{equation}}
  \newcommand{\ee}{\end{equation}}
\newcommand{\cI}{\mathcal I}
\newcommand{\cD}{\mathcal D}
\newcommand{\set}[1]{\left\lbrace #1 \right\rbrace}
\newcommand{\abs}[1]{\left| #1 \right|}
\newcommand{\mc}{\mathcal}
\title{On the non-equivalence of the Bernoulli and $K$ properties in dimension four}
\author{Adam Kanigowski \and Federico Rodriguez Hertz\footnote{F. R. H. was supported by NSF grants DMS 1201326 and DMS 1500947}\and Kurt Vinhage\footnote{K. V. was supported by the National Science Foundation under Award DMS 1604796} }
\date{}
\begin{document}
\baselineskip=14pt \maketitle

\vspace{-.25cm}\hfill {\it \small Dedicated to the memory of Roy Adler.}\vspace{.25cm}

\begin{abstract}
We study skew products where the base is a hyperbolic automorphism of $\T^2$, the fiber is a smooth area preserving flow on $\T^2$ with one fixed point (of high degeneracy) and the skewing function is a smooth non coboundary with non-zero integral. The fiber dynamics can be represented as a special flow over an irrational rotation and a roof function with one power singularity. We show that for a full measure set of rotations the corresponding skew product is $K$ and not Bernoulli. As a consequence we get a natural class of volume-preserving diffeomorphisms of $\T^4$ which are $K$ and not Bernoulli.   
\end{abstract}
\section{Introduction}
Bernoulli shifts provide the simplest model for  {\it chaotic behaviour} of dynamical systems in the measurable category. Ornstein showed in \cite{ornstein1} that the entropy of a Bernoulli system is the only invariant up to measurable conjugacy. Another natural property describing chaotic systems is the {\it Kolmogorov property}, or {\it $K$-property} for short. It follows by \cite{roh-sin} that the $K$-property is equivalent to {\it completely positive} entropy: each non-trivial factor of the system has positive entropy. It is an easy observation that Bernoulli shifts are $K$. Kolmogorov conjectured that the converse is also true. This was shown to be false by Ornstein, who contructed a $K$-automorphism which is not Bernoulli \cite{ornstein2}. Ornstein's contruction, however, was done through a certain specialized combinatorial procedure. In particular, it did not indicate whether one should expect equivalence of the properties in any category. Since the introduction of these properties, much effort has been put into understanding how often to expect equivalence of these properties, and finding more natural examples: both in the sense of finding {\it families} of examples in the measurable category
, and finding examples in the smooth category.

A natural place to search for such transformations is {\it skew products} over Bernoulli shifts. Recall that for $T:(X,\cB,\mu)\to (X,\cB,\mu)$, $S:(Y,\cC,\nu)\to (Y,\cC,\nu)$ and $\vphi:X\to \Z$,  the skew product over the {\em base} $T$ with {\em fiber} $S$ and with  {\em cocycle} $\vphi$ is given by
$$
T^{S}_\vphi(x,y)=(Tx, S^{\vphi(x)}y).
$$

$T^S_\vphi$ is sometimes called the {\it total map}. Ornstein, Adler and Weiss proposed to consider a special case of the skew product construction as follows (see, eg, \cite[p.394]{kalikow}, \cite[p. 682 (2)]{weiss}). Let $T=S$ to be a Bernoulli shift on $\{0,1\}^\Z$ with $\mu=(1/2,1/2)^\Z$ and $\vphi(x)= (-1)^{x_0}$ ($x_0$ denotes the zeroth cooridnate of $x$). Such transformations are called $(T,T^{-1})$ transformations or {\it random walks in random environment}\footnote{Sometimes also called random walks in random scenery}. It is almost immediate that the $(T,T^{-1})$ transformation is $K$. 

Notice that one can take the fiber to be a flow $(S_t)$, the cocycle $\vphi$ to take values in $\R$ and obtain more general skew products of the form 
\begin{equation}\label{sk}
T^{(S_t)}_\vphi(x,y)=(Tx,S_{\vphi(x)}(y)).
\end{equation}

Since every positive entropy system has a Bernoulli factor of the same entropy \cite{sinai64}, the skew-product construction is in some sense universal in the measurable category. However, to include all possible transformations, one in general must take cocycles $\vphi : X \to \Aut(Y)$ for some Lebesgue space $Y$. In the skew-product setup, one may analyze the entropy through looking at the contributions from the base and fiber. Indeed, the Abramov-Rokhlin entropy formula \cite{abramov-rohlin} shows that $h\left(T^{S_t}_\vphi\right) = h(T) + h(S)\left|\int \vphi\right|$. This observation also shows that if the Bernoulli base is also a Bernoulli factor with maximal entropy, the fiber entropy must be 0. This can occur for transformations of type \eqref{sk} in two ways: by having 0 average for the cocycle $\vphi$ (as in the $(T,T^{-1})$ examples), or by having a 0 entropy fiber $S$. One may, of course, take positive entropy fibers with $\int \vphi > 0$, but the Bernoulli base will fail to be a candidate for isomorphism. In \cite{feldman}, J. Feldman introduced the notion of loosely Bernoulli process to build skew-product examples of K but not Bernoulli automorphisms. His key criterion is that if $T$ is loosely Bernoulli then $S$ should be loosely Bernoulli. In this case he used as base dynamics the shift on two symbols with equal mass and $\phi$ is the $0, 1$ function, so the integral is positive. The fiber map $S$ is what is now called a {\it standard} map: a 0 entropy, loosely Bernoulli system.  Later, Kalikow used the loosely Bernoulli property to show that the $(T,T^{-1})$-transformation is $K$ but not Bernoulli in \cite{kalikow}.

If one wishes to exhibit examples in the smooth category along the same construction, one can easily replace the Bernoulli base with a smooth realization: toral automorphisms. However, since the cocycle $\vphi$ must also be smooth, one must take the fiber system as a smooth flow (and not transformation)\footnote{If the fiber is a transformation, $\varphi$ must take values in $\Z$ and hence must be locally constant}. Such examples were first obtained by Katok in \cite{Kat} using a skew product of the form \eqref{sk}. There, $T$ is an Anosov diffeomoprhism, $S_t$ is a smooth ergodic flow, and $\vphi$ is a positive cocycle which is not cohomologous to a contant.
 The principle achievement of \cite{Kat} was showing that the skew product is $K$ with a general cocycle $\vphi$. The Feldman criterion was then used to produce first \textbf{smooth} examples of $K$ systems which are not Bernoulli: if $(S_t)$ is not {\em loosely Bernoulli}, the skew product is not Bernoulli. One may take, for example, $S_t=h_t\times h_t$ where $h_t$ is the horocycle flow on the unit tangent bundle to a surface of constant negative curvature\footnote{The cartesian square of the horocycle flow was shown to be not loosely Bernoulli in \cite{ratner}.}. 
This example has the advantage of having very explicit formulas, but also the disadvantage of being 8-dimensional. While not written formally at the time, it was remarked that one could obtain an example in dimension 5. This example can be constructed in the following way: The {\it approximation-by-conjugation} method developed by Anosov and Katok can produce smooth realizations of certain models in the measurable category \cite{anosov-katok}. This yields a non-loosely Bernoulli transformation $S : \T^2 \to \T^2$, following the combinatorial construction of Feldman \cite{feldman} (see \cite{kat-constructions} or \cite{benhenda} for a modern treatment of this construction). Then taking the constant-time suspension of this transformation yields a non-loosely Bernoulli constant-time suspension in the fiber. Then the skew-product and the direct product of the special flow with base $T$ and roof $\vphi$ and $S$ are sections of the same smooth flow, and hence Kakutani equivalent. As a result, $T^{(S_t)}_\vphi$ is not loosely Bernoulli, and is $K$ by \cite{Kat}.

Later Rudolph using methods similar to the Kalikow $(T,T^{-1})$ example, obtained other smooth examples in dimension $5$ \cite{rudolph}. In the class considered in \cite{rudolph}, one can take 
$(S_t)$ to be the geodesic flow and $\vphi$ some smooth cocycle which is not a coboundary such that $\int \vphi = 0$. 
 One the other hand, by Pesin theory, smooth $K$ automorphisms in dimension $2$ are Bernoulli \cite{pesin}. 

It is therefore natural to ask about the equivalence of $K$ and Bernoulli properties in dimensions $3$ and $4$.
The case of dimension $3$ is more difficult, as skew-product constructions respecting the smooth structure would require a circle fiber. Since we assume measure-preserving, the action on fibers must be isometric. Such skew-product extensions are always Bernoulli \cite{burton-shields}.  On the other hand, in the context of volume preserving partially hyperbolic automorphisms on $3$ dimensional manifolds with central foliation by circles, the accessibility property implies the K-property (see \cite{HHU} for definitions). Among these systems, if central exponent is non-zero, by Y. Pesin \cite{pesin} the system is Bernoulli. If the central exponent is zero but the central foliation is absolutely continuous, then the system is again Bernoulli by an application of the results in \cite{AVW} and the aforementioned result of \cite{burton-shields}. The remaining case is therefore the 3 dimensional partially hyperbolic, volume preserving systems, with accessibility property, non-absolutely continuous compact central foliation and zero central exponent. G. Ponce, A. Tahzibi and R. Var\~{a}o conjectured later that not all such systems are Bernoulli \cite{ponce-tahzibi-varao}.
 
We summarize the main developments for examples of $K$, non-Bernoulli systems here, in chronological order:

\begin{center}
\begin{tabular}{r|c|c|c|c}
                     & LB Fiber & Fiber Entropy & Smooth & $\int \vphi$ \\
\hline
Ornstein \cite{ornstein2}  & N/A & N/A & No & N/A\\
\hline
Feldman \cite{feldman}  & No & 0 & No & $\not= 0$ \\
\hline
Katok \cite{Kat}   & No & 0 & Yes & $\not= 0$ \\
\hline
Burton \cite{burton}& Yes & Any & No & $\not= 0$ \\
\hline
Kalikow \cite{kalikow}   & Yes & $> 0$ & No & $0$ \\
\hline
Rudolph \cite{rudolph} & Yes & $> 0$ & Yes & $0$ \\
\hline
Theorem \ref{thm:main} & Yes & 0 & Yes & $\not= 0$
\end{tabular}
\end{center}

Another interesting property to investigate is whether the examples are loosely Bernoulli. Following \cite{feldman}, most proofs actually prove that the skew product is not loosely Bernoulli. In fact, the only example above which does not show that the skew product is not loosely Bernoulli is \cite{burton}. In those examples and ours, the fiber is loosely Bernoulli. We do not have reasonable evidence either way to conjecture whether the total map is loosely Bernoulli.

\subsection{Statement of Main Results}

We show that there are a class of flows on surfaces $(S_t)$, which when used in transfomations of the form \eqref{sk} over a Bernoulli base, yield $K$, non-Bernoulli examples. In particular, we produce the first such examples in dimension 4. For the fiber, we take a smooth flow $K_t \curvearrowright (\T^2,\lambda)$  with a single singularity with a high order of degeneracy (we will also call this flows Kochergin flows). More precisely, we take a time change of a linear flow with a Hamiltonian which at the point of degeneracy is locally given by $(ax+by)(x^2+y^2)^l$ for some sufficiently large $l$. Such flow can be represented as {\em special flow} over irrational rotation by $\alpha$ and roof function which is $C^2(\T\setminus \{0\})$, $f>0$ and satisfies 
\begin{equation}\label{asu}\lim_{y\to 0^+}\frac{f(y)}{y^{-(1-\eta)}}=M_1\text{   and   }\lim_{y\to 0^-}\frac{f(y)}{y^{-(1-\eta)}}=M_1
\end{equation}

\begin{equation}\label{asu2}\lim_{y\to 0^+}\frac{f{'}(y)}{y^{-(2-\eta)}}=-N_1\text{   and   }\lim_{y\to 0^-}\frac{f{'}(y)}{y^{-(2-\eta)}}=N_1
\end{equation}

\begin{equation}\label{asu3}\lim_{y\to 0^+}\frac{f{''}(y)}{y^{-(3-\eta)}}=R_1\text{   and   }\lim_{y\to 0^-}\frac{f{''}(y)}{y^{-(3-\eta)}}=R_1
\end{equation}
where $\eta$ is a small number (since the order of degeneracy of the saddle is high),  $\eta\in(0,\frac{1}{100})$, $0< M_1,N_1,R_1<+\infty$. For more on the connection between smooth flows on surfaces and special flows over rotations (or more generally, interval exchange transformations), see \cite{kochergin,Fa-Fo-Ka}. We will denote the flow given by $f,\alpha$ by $(K_t^{f,\alpha})$ (or, if $f$ and $\alpha$ are fixed and understood, we will often simply write $(K_t)$). Finally, the cocycle $\vphi:\T^2\to \R$ is a smooth function such that $\vphi_0=\int_{\T^2}\vphi \, d\lambda \not= 0$ and
\begin{equation}\label{eq:K}
\psi(Tx)-\psi(x)=\vphi(x)-\vphi_0,
\end{equation}
has no continuous solutions $\psi$. We say that $\alpha\in \cD$ if and only if there exists a constant $C=C(\alpha)$ such that for any $p,q\in \Z$, $q > 0$,
$$
\left|\alpha-\frac{p}{q}\right|\geq \frac{C}{q^2\log^{11/10}q}.
$$
It is classical that the set $\cD$ has full Lebesgue measure on $\T$ (see, eg, \cite{khinchin}).
With the above notation, our main result is the following:
\begin{theorem}\label{thm:main}Let $\alpha\in \cD$. Then the automorphism $A^{(K_t)}_{\vphi}$ is $K$ and is not Bernoulli.
\end{theorem}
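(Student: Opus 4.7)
The plan is to prove the two assertions separately. For the $K$-property, I would follow the template of Katok \cite{Kat}. The essential ingredients available to us are: the base $A$ is Anosov (hence exponentially mixing and satisfies Livsic cohomology), the Kochergin flow $(K_t)$ is ergodic, and the hypothesis that \eqref{eq:K} has no continuous solution $\psi$ promotes, via Livsic applied to the Anosov base, to the statement that $\varphi - \varphi_0$ is not a measurable coboundary. Using the skew-product identity
\[
(A^{(K_t)}_\varphi)^n(x,y) = (A^n x, K_{S_n\varphi(x)} y),
\]
one then argues that the Pinsker partition of $A^{(K_t)}_\varphi$ is trivial: any zero-entropy factor would give a measurable invariant under all the fiber shifts $K_t$ that appear as $S_n\varphi(x) - S_n\varphi(x')$ for stable-paired $x, x'$; the non-coboundary hypothesis plus ergodicity of $(K_t)$ then forces such a function to be constant. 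This is the skeleton of Katok's argument and it applies directly in our setting, giving complete positive entropy of $A^{(K_t)}_\varphi$.

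The non-Bernoulli assertion is the crux and represents the main novelty, since the fiber $(K_t)$ is itself loosely Bernoulli and Feldman's criterion is unavailable. I would verify that Ornstein's very weak Bernoulli (VWB) property fails for a suitable finite partition $\mathcal{P}$ of $\T^2 \times \T^2$. The main geometric ingredient is the polynomial shearing of the Kochergin flow near its degenerate fixed point, controlled by \eqref{asu}--\eqref{asu3}: two points whose $(K_t)$-orbits pass close to the singularity at slightly different times diverge transverse to the flow direction at a rate determined by the order of degeneracy of the singularity and the Diophantine quality of $\alpha$. On the base side, exponential contraction of stable leaves together with smoothness of $\varphi$ yields that, for $x' \in W^s_A(x)$,
\[
\bigl|S_n(\varphi-\varphi_0)(x) - S_n(\varphi-\varphi_0)(x')\bigr| = O(1),
\]
uniformly in $n$. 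Consequently, the fiber components $K_{S_n\varphi(x)}y$ and $K_{S_n\varphi(x')}y$ of two stable-paired orbits differ by a bounded-time shift of $(K_t)$ throughout the iteration.

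Combining these two inputs, I expect to prove that for a positive-density set of times $n$ and a positive-measure set of stable pairs, this bounded fiber-time shift is amplified by passages near the singularity into a genuine transverse separation, so that the $\bar{f}$-distance between the $\mathcal{P}$-names of $(x,y)$ and $(x',y)$ stays bounded below by a definite constant. This contradicts VWB along stable leaves, hence Bernoullicity of $A^{(K_t)}_\varphi$.

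The hardest step will be the quantitative shearing estimate and its matching to the cocycle statistics: precisely controlling the measure of fiber pairs $(y,y')$ whose $(K_t)$-images shear by more than a prescribed amount under a bounded-time shift, and calibrating this to the statistics of passages of the rotation orbit by $\alpha$ near the singularity. I expect this to require a Ratner-type divergence estimate for the Kochergin flow, tuned to the specific asymptotics \eqref{asu}--\eqref{asu3}, together with a Borel--Cantelli style argument controlling returns of the rotation by $\alpha$ to small neighbourhoods of $0$. The Diophantine condition $\alpha \in \cD$ enters in an essential way at exactly this point, by bounding the density and depth of these close returns and so quantifying how often a bounded fiber-time shift genuinely translates into transverse divergence of orbits.
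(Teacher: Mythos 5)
Your sketch of the $K$-property matches the paper, which simply cites Katok's theorem from \cite{Kat} once it is noted that \eqref{eq:K} has no continuous (equivalently, by Livsic, no measurable) solution. That part is fine.

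The non-Bernoulli part, however, has a genuine gap that goes to the heart of the mechanism. You propose to take $x' \in W^s_A(x)$, keep the \emph{same} fiber point $y$, and observe that $|S_n(\varphi-\varphi_0)(x) - S_n(\varphi-\varphi_0)(x')| = O(1)$ along stable leaves, hoping the passage near the Kochergin singularity amplifies this bounded flow-time discrepancy into transverse separation. But $K_{S_n\varphi(x)}y$ and $K_{S_n\varphi(x')}y$ lie on the \emph{same} flow orbit, shifted by a bounded time; a bounded time shift of a single point never produces transverse divergence, no matter how degenerate the singularity is. The Ratner-type shearing you invoke is a statement about two \emph{distinct nearby} fiber points whose base coordinates under the rotation straddle the singular cusp; it says nothing about a single orbit compared with a bounded reparametrization of itself. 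Moreover, in the paper's reduction (Proposition \ref{prop:skew-vwb-def}, via Lemma \ref{lemma:bslemma1}), the fiber coordinate is \emph{determined} by the past $\sigma$-algebra, so the VWB comparison is forced to be between pairs $(x^-,y)$ and $(\bar{x}^-,\bar{y})$ with generically $\bar{y}\ne y$; restricting to $\bar{y}=y$ is a measure-zero condition that cannot disprove VWB. Finally, you appeal to the $\bar{f}$-metric, which is the Feldman/Kakutani-equivalence metric used to disprove loose Bernoullicity; VWB is a Hamming ($\bar{d}$) statement, and the paper is explicit that it does \emph{not} know whether $A^{(K_t)}_\vphi$ is loosely Bernoulli.

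What the paper actually does is rather different in structure. It constructs a positive-measure set $B \subset \T^2 \times M$ and a family $\{C_y\}$, and shows (Theorem \ref{thm:vwb}) that for any $(x,y)\in B$ and $(x',y')\in B$ with $y'\in C_y$, the Hamming frequency $D_N^Q$ of name agreement is $\le 9/10$ for every $N$. The separation comes from comparing the orbits of two \emph{distinct} fiber points $y,y'$, and it splits into two regimes: if the horizontal rotation coordinates separate, the Diophantine condition on $\alpha$ forces them far apart quickly (Lemma \ref{cons:dis}); if they stay horizontally locked, the Birkhoff sums of $f'$ (Proposition \ref{mpr}) grow like $n^{2-4\eta}$ most of the time and stretch the pair apart vertically. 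Both ingredients act on two distinct initial points in $M$, not on a single orbit with a bounded reparametrization. You would need to rebuild your argument around pairs $(y,y')$ with $y\ne y'$ and supply both the horizontal (rotation recurrence) and vertical (Birkhoff sums of $f'$) separation estimates, together with some device, such as the Burton--Shields identification of pasts with fiber points, to connect this to a workable formulation of VWB.
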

By the results in \cite{Kat}, since \eqref{eq:K} has no continuous solution, $A^{(K_t)}_{\vphi}$ is $K$. Here, since the base dynamics is a toral automorphism there is no need to assume the flow $K_t$ is weak mixing (though it is the case for Kochergin flows, \cite{kochergin}). Therefore to prove Theorem \ref{thm:main} one has to show that $A^{(K_t)}_{\vphi}$ is not Bernoulli. We will in fact show that $A^{(K_t)}_\vphi$ is not VWB (very weak Bernoulli, see Definition \ref{def:VWB-general}) with respect to some convenient partition. 

\subsection{Some Observations on Fiber Complexity}

Let us make a few remarks highlighting the differences between our approach and those of \cite{kalikow}, \cite{rudolph} or \cite{Kat}. First, thanks to Pesin formula \cite{pesin}, smooth surface flows always have $0$ entropy. In fact, the orbit growth is {\em polynomial} and therefore the methods of \cite{kalikow}, \cite{rudolph} showing the non-Bernoulli property will fail for us. On the other hand every smooth surface flow is standard and therefore has a section on which the first return map is an {\em interval exchange transformation} (which is loosely Bernoulli, see \cite{katok-sataev}). 
In summary, our example is the first which uses a smooth standard fiber transformation. Such a fiber transformation was constructed by \cite{burton} in the measurable category. However, like the original example of Ornstein, it was created exclusively for the purpose of having a loosely Bernoulli fiber, and did not arise naturally.

Let us also emphasize another important consequence of Theorem \ref{thm:main}. It was suggested in \cite{weiss}, to  consider skew-products over Bernoulli transfomrations, and study which fiber transformations result in a Bernoulli skew product. With {\it elliptic} fibers (very slow or no orbit growth), one expects the skew product to remain Bernoulli. This is shown to be true  in \cite{adler-shields}, where the fiber is an irrational rotation and in \cite{burton-shieldsMixing}, where the fiber is a mixing rank one system with slow orbit growth. 
With slightly higher complexity, namely weakly mixing systems which admit good cyclic approximations, one often finds non-Bernoulli extensions. This was the original method of Feldman and Katok. Benhenda was able to use these methods to find uncountable class of non-isomorphic extensions \cite{benhenda}.
From \cite{kalikow}, \cite{rudolph} one can deduce that if the fiber is {\it hyperbolic} (i.e., has exponential orbit growth or more explicitly, positive entropy) and $\int \vphi = 0$, then the corresponding skew product does not remain Bernoulli.\footnote{If $\int \vphi \not= 0$, then the base is not the maximal Bernoulli factor.} This is also confirmed in a recent work of Austin \cite{austin}, where another uncountable family of non-isomorphic skew product with Bernoulli base is constructed.  
 
However, nothing was known for fibers with polynomial or intermediate growth, i.e. {\it parabolic} fibers. Kochergin flows are parabolic in this sense, so one may consider Theorem \ref{thm:main} as a first step towards the study of skew products with parabolic fibers. It also suggests that such skew products do not remain Bernoulli. The methods used to prove Theorem \ref{thm:main} use tools with properties that generalize readily. The authors plan to continue the investigation of skew products with parabolic fibers in a subsequent paper, in particular the case of horocycle flows, their time changes and time changes of nilflows.

\subsection{Plan of the paper.}
In Section \ref{sec.def} we give definitions of very weak Bernoulli (VWB), Kochergin special flows and give some Denjoy-Koksma estimates for a Kochergin roof function. In Section \ref{sec.vwb} we first give a characterization of (VWB) for zero entropy extensions of Bernoulli systems (see Proposition \ref{prop:skew-vwb-def}) then we state Theorem \ref{thm:vwb} which contains the main combinatorial properties. Finally in Subsection \ref{main.vwb} we use Proposition \ref{prop:skew-vwb-def} to show that Theorem \ref{thm:vwb} implies Theorem \ref{thm:main}. 

A proof of Theorem \ref{thm:vwb} is then given in Section \ref{vwb.cond} conditionally on Proposition \ref{techn}. The rest of the paper is devoted for proving 1-6 in Proposition \ref{techn}. Section \ref{bcy} is devoted for construction of $B$ and   
$\{C_y\}_{y\in M}$ from Proposition \ref{techn}. The set $B$ is constructed by Egorov's theorem type of reasoning as we want good control on $\vphi$ (given by CLT) and on $f$ (not coming to close to singularity). For $y\in M$, $C_y$ is the set of points whose orbit do not come close to $y$  in a short time. Properties 1. and 2. will follow then automatically by the construction. Property 5. is a striaghtforward consequence of Lemma \ref{cons:dis} (which is a consequence of diophantine assumptions). The construction of $C_y$ gives then Properties 3. and 6. (see Subsection \ref{p36}). 

The most difficult is 4., which is handeled separately in Section \ref{sec.4}. It requires vertical stretch for nearby points. This is guaranteed by $f'_{n}$ being of order $n^{2-\eta}$. This does not happen for all times, since we have cancelations (the roof is symmetric). Proposition \ref{mpr} however shows that $f'_n$ is of correct order for most of the times (with a polynomial gain). Finally in Section \ref{p:mpr} using probabilistic tools, we prove Proposition \ref{mpr}.

\subsection*{Acknowledgments}
The authors would like to thank Mariusz Lema\'nczyk and Jean-Paul Thouvenot for several discussions and suggestions on the subject. The authors would also like to thank Anatole Katok for discussions and several suggestions to improve the paper.

\section{Kochergin special flows and Denjoy-Koksma estimates}\label{sec.def}

\subsection{Special flows}
Let $R_\a:\T\to\T$, $R_\a(y)=y+\a \; \text{\rm mod }1$, where $\a\in \T$ is an irrational number with the sequence of denominators $(q_n)_{n=1}^{+\infty}$ and let $\psi\in L^1(\T,\mathcal{B},\lambda)$ be a strictly positive function. We recall that the special flow $T_t:=T_t^{\a,\psi}$ constructed above $R_\a$ and under $\psi$ is given by 
\begin{eqnarray*}
\T \times \R / \sim  &  \rightarrow &  \T \times \R / \sim  \\
 (y,s) & \rightarrow & (y,s+t), \end{eqnarray*}
where $\sim$ is the
identification 
\begin{equation}
\label{FlowSpace}
(y, s + \psi(y)) \sim (R_\a(y),s) \,.
\end{equation}
Equivalently, this special flow  is defined for  $t+s \geq 0$ (with a similar definition for negative times) by 
$$T_t(y,s) = (y+N(y,s,t)\a , t+s-   \psi_{N(y,s,t)} (y))$$ 
   where $N(y,s,t)$ is the unique integer such that 
\begin{equation}
\label{D-C}
0 \leq  t+s-   \psi_{N(y,s,t)} (y) \leq \psi(y+N(y,s,t)\a),\end{equation}    
and
$$
\psi_n(y)=\left\{\begin{array}{ccc}
\psi(y)+\ldots+\psi(R_\alpha^{n-1}y) &\mbox{if} & n>0\\
0&\mbox{if}& n=0\\
-(\psi(R_\alpha^ny)+\ldots+\psi(R_\alpha^{-1}y))&\mbox{if} &n<0.\end{array}\right.$$

\paragraph{Kochergin flows under consideration}
Flows which we will consider are special flows over an irrational rotation $\a$, and a roof function $f$ satisfying \eqref{asu}, \eqref{asu2}, \eqref{asu3} with $\eta\in (0,1/100)$. To simplify notation we assume that $M_1=N_1=R_1=1$ and that $\int_\T f\,d\lambda=1$. We moreover assume that $\alpha\in \cD$, i.e. $\alpha$ is not to well aproximated by rationals. We will denote the space on which the Kochergin flow $(K_t)_{t\in \R}$ acts by $(M,\mu)$. 
Notice that we have the following metric on $M$:
$$
d((y,s)(y',s')):=d_H(y,y')+|s-s'|,
$$
where $d_H(y,y')=\|y-y'\|$ is the horizontal distance. For simplicity we will often denote $(y,s)$ and $N(y,s,t)$ respectively by $y$ and $N(y,t)$. For a set  $W\subset \T$ we denote $W^f:=\{(y,s)\in M\;:\; y\in W\}$.




\subsubsection{Denjoy-Koksma inequalities}
The following lemma is a consequence of Denjoy-Koksma inequality, (see for example \cite{Fa-Fo-Ka}, Lemma 3.1.).
\begin{lemma}\label{koksi} For every $y\in \T$ and every $ |M|\in [q_s,q_{s+1}]$  we have 
\begin{equation}\label{koks0}
f(y^M_{min})+\frac{q_{s}}{3}\leq f_M(y) \leq f(y^M_{min})+3q_{s+1}
\end{equation} 
\begin{equation}\label{koks1}
f'(y^M_{min})-8q_{s+1}^{2-\eta}<
|f'_M(y)|<f'(y^M_{min})+8q_{s+1}^{2-\eta}
\end{equation}
and
\begin{equation}\label{koks2}
f''(y^M_{min})\leq f''_M(y)<
f''(y^M_{min})+8q_{s+1}^{3-\eta},
\end{equation}
where $y^M_{min}=\min_{0\leq j\leq M}d(y+j\a,0)$.
\end{lemma}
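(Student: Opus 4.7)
The plan is to isolate the singular contribution coming from the orbit point closest to $0$ and control the remaining Birkhoff sum via the classical Denjoy--Koksma inequality applied to a suitable bounded truncation. Concretely, for each of $f, f', f''$ I would write
\[
f_M(y) = f(y^M_{min}) + R_M(y), \qquad R_M(y) = \sum_{\substack{0 \leq j < M \\ y+j\alpha \neq y^M_{min}}} f(y+j\alpha),
\]
and similarly for $f'_M, f''_M$. The key combinatorial input is the three-distance theorem applied to $\{y + j\alpha \bmod 1 : 0 \leq j < M\}$ with $q_s \leq M \leq q_{s+1}$: ordering the non-minimum iterates by their distance to $0$ as $d_1 \leq d_2 \leq \ldots \leq d_{M-1}$, the continued fraction structure forces $d_k \geq c\, k/q_{s+1}$ for a universal constant $c > 0$.

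For the upper bounds, I would substitute $f(t) \leq C|t|^{-(1-\eta)}$, $|f'(t)| \leq C|t|^{-(2-\eta)}$, $f''(t) \leq C|t|^{-(3-\eta)}$ (from \eqref{asu}--\eqref{asu3}) into the distance bound and estimate the tail:
\[
R_M(y) \leq C \sum_{k=1}^{M-1} (q_{s+1}/k)^{1-\eta} \leq C' q_{s+1}^{1-\eta} M^{\eta} \leq 3 q_{s+1},
\]
using $M \leq q_{s+1}$. The same computation with exponents $2-\eta$ and $3-\eta$ gives remainders of order $q_{s+1}^{2-\eta}$ and $q_{s+1}^{3-\eta}$ respectively, which after a triangle inequality on $|f'_M(y)|$ yields the upper halves of \eqref{koks1} and \eqref{koks2}.

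For the lower bound in \eqref{koks0}, I would truncate $f$ at scale $\delta \sim 1/q_s$, producing $\hat f$ with $\hat f \leq f$, bounded variation $V(\hat f) \leq C q_s^{1-\eta}$, and integral $\int \hat f \geq 1 - O(q_s^{-\eta})$. Since $\eta < 1/100$, both the variation and the integral error are $o(q_s)$. Partition $\{0, \ldots, M-1\}$ into consecutive blocks of length $q_s$ and pick a block $I$ with $j_{min} \notin I$ (possible whenever $M \geq q_s$, possibly after shifting the partition by one index). Then the classical Denjoy--Koksma inequality gives
\[
\sum_{j \in I} \hat f(y + j\alpha) \geq q_s \int \hat f - V(\hat f) \geq q_s/2,
\]
and since $f \geq \hat f \geq 0$ pointwise, $R_M(y) \geq q_s/2 \geq q_s/3$. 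The lower bound on $|f'_M(y)|$ in \eqref{koks1} follows by combining the same remainder estimate with a reverse triangle inequality, and the lower bound in \eqref{koks2} is immediate since by \eqref{asu3} $f'' > 0$ from both sides near $0$, so every term of the sum is nonnegative and the bound reduces to keeping the dominant $f''(y^M_{min})$.

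The main obstacle is calibrating the truncation scale in the lower bound argument: $\delta$ must be coarse enough that $V(\hat f) \ll q_s$ (so truncation at $\sim 1/q_s$ rather than the finer $\sim 1/q_{s+1}$), yet fine enough that the excluded region contains only the minimizing iterate. The hypothesis $\eta < 1/100$ is precisely what makes $V(\hat f) \leq q_s^{1-\eta} = o(q_s)$ and allows all the explicit constants $1/3$, $3$, $8$ to close. Note that although $\alpha \in \cD$ is available (and would give extra control via $q_{s+1} \leq C q_s \log^{11/10} q_s$), the lemma as stated only uses the three-distance theorem and not the Diophantine bound.
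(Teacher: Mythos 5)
The paper does not actually prove this lemma; it is cited as a consequence of the Denjoy--Koksma inequality, referring to Lemma~3.1 of [Fa-Fo-Ka], so there is no in-paper argument to compare against. Your overall decomposition (peeling off the closest iterate, controlling the remainder by the three-distance structure and a Denjoy--Koksma estimate on a truncated roof) is exactly the standard route, and your handling of the lower bound in \eqref{koks0} and of \eqref{koks1}, \eqref{koks2} (where the tail series $\sum k^{-(2-\eta)}$, $\sum k^{-(3-\eta)}$ converge) is in the right spirit.

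There is, however, a genuine gap in your upper bound for \eqref{koks0}. Plugging $d_k\gtrsim k/q_{s+1}$ into $f(d_k)\lesssim d_k^{-(1-\eta)}$ and summing, you get
$\sum_{k=1}^{M} k^{-(1-\eta)} \asymp (M^{\eta}-1)/\eta$,
so the remainder you actually obtain is of size $\frac{C}{\eta}q_{s+1}^{1-\eta}M^{\eta}\leq \frac{C}{\eta}q_{s+1}$, not $3q_{s+1}$: the exponent $1-\eta$ is critically close to $1$, and the prefactor $1/\eta\geq 100$ does not disappear. Your closing remark that $\eta<1/100$ is what makes the constants close points the wrong way here --- small $\eta$ makes $\sum k^{-(1-\eta)}$ worse, not better. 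The fix is to treat the upper bound the same way you treated the lower bound: truncate $f$ at scale $\sim 1/q_s$ and run Denjoy--Koksma blockwise on the truncation (this uses $\int f=1$ and yields $\leq M+O(q_{s+1}q_s^{-\eta})$), and reserve the three-distance summation for the at most $O(q_{s+1}/q_s)$ non-minimal iterates inside the truncation window, where it produces an $O(\eta^{-1}q_{s+1}q_s^{-\eta})=o(q_{s+1})$ term. A pure tail-sum against the three-distance bound cannot recover the stated constant. A secondary, smaller issue: for the lower bound in \eqref{koks2} you assert that \eqref{asu3} makes every term of the Birkhoff sum of $f''$ nonnegative, but the hypotheses only control $f''$ near the singularity; away from it $f''$ may change sign. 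The estimate survives because the contribution of the second-nearest iterate is $\gtrsim q_s^{3-\eta}$ and dominates the $O(M)$ contribution of the bounded part, but the one-line justification as written is not correct.
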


\section{Very weak Bernoulli for skew products and a theorem which implies Theorem \ref{thm:main}}\label{sec.vwb}

Our strategy for showing that a transformation $T$ is not Bernoulli is to disprove the {\it very weak Bernoulli property} with respect to a convenient partition $\mathcal R$. The definition has the advantage that if $T$ is Bernoulli, then it has the very weak Bernoulli property for every partition. The results of this section follow \cite{burton-shields} and \cite{shields}. Let $T : (X, \mu) \to (X, \mu)$ be a measurable transformation preserving a probability measure $\mu$, and $\mathcal R$ be a finite partition of $X$. The $\mathcal R$-name of $x$ is the sequence of atoms of $x$ which the orbit of $x$ determines, denoted by $x_i$. Note that $\mu$ induces a measure on $\Sigma_\mathcal R^- = \set{ (\dots,r_{-2},r_{-1},r_0) : r_i \in \mathcal R}$ by identifying cylinder sets with the corresponding elements of $\displaystyle \bigvee_{i=0}^\infty T^i\mathcal R$.

Let us define the very weak Bernoulli property following \cite[p. 134]{shields}.

\begin{definition}
\label{def:VWB-general}
$T$ is {\it very weak Bernoulli} (VWB) with respect to $\mathcal R$ if for every $\ve > 0$, there exists some $N \in \N$ and a measurable set $G \subset \displaystyle \bigvee_{i=0}^\infty T^i\mathcal R$ (meaning it is measurable with respect to this partition) such that $\mu(G) > 1-\ve$ and for every pair of atoms $r,\bar{r} \subset G$ of $\bigvee_{i=0}^\infty T^i\mc R$, there is a $\mu$-preserving map $\Phi_{r,\bar{r}} : r \to \bar r$ and a set $L \subset  r$ such that:

\begin{enumerate}[(i)]
\item $\mu^r(L) < \ve$
\item If $x \not\in L, x\in r$, $\#\set{i \in [1,N] : \Phi(x)_i = x_i} \ge (1-\ve) N$
\end{enumerate}
Here $\mu^r$ stands for the conditional measure of $\mu$ along the atom $r$ of the partition $ \displaystyle \bigvee_{i=0}^\infty T^i\mathcal R$.
\end{definition}

For the benefit of the reader to unwrap the definitions, we point out the structures above in the case of a Bernoulli transformation with the parition into legnth 1 cylinders. Then points lie in the same atom of $\bigvee_{i=0}^\infty T^i\mc R$ if and only if they share the same present and past. Then we may take $G$ to be the full shift. Then given two atoms $r,\bar{r}$ (ie, two pasts), there is an obvious map which simply re-assigns the pasts and presents of points, match not only most codes of the orbit, but all of them. So we interpret the VWB condition as the future having aribtrarily small dependence on the past and present.

\subsection{Zero Entropy Extensions of Bernoulli Systems}
Definition \ref{def:VWB-general} can be difficult to work with in general, but in the case of skew products, can be simplified by making a few assumptions. Suppose that $S$ is a Bernoulli transformation, so that $S$ is the shift on $\Sigma_d$ with probabilities $(p_1,\dots,p_d)$, $\varphi : \Sigma_d \to \R$ is a measurable function, and $(K_t) : M \to M$ is a zero entropy flow preserving a measure $\nu$ on $M$.  We now assume that $X = \Sigma_d \times M$ and $T : X \to X$ takes the following form:

\begin{equation}
\label{eq:bern-skew}
T(x,y) = (S(x),K_{\varphi(x)}(y))
\end{equation}

Let $P=P_d$ be the partition of $\Sigma_d$ into cylinders $[i]_0$, $i=1,\dots, d$ and let $Q$ be a finite partition of $M$ such that $P_d\times Q$ is generating for $T$ (see Lemma \ref{generatingpartition}). Here we denote with $P_d\times Q$ the partition into products of elements in $P_d$ and $Q$. Note also that $\Sigma_d = \Sigma_d^- \times \Sigma_d^+$, and $\mu_d = \mu_d^- \times \mu_d^+$, where $\Sigma_d^- = \set{(\dots,x_{-2},x_{-1},x_0) }$ and $\Sigma_d^+ = \set{(x_1,x_2,\dots)}$. The following proposition is adapted from \cite{burton-shields}, where it was phrased only for the case when $d = 2$ and the $(1/2,1/2)$ measure on $\Sigma_2$ (although our deduction of this equivalence is virtually identical):

\begin{proposition}
\label{prop:skew-vwb-def}
Assume $T$ takes the form \eqref{eq:bern-skew} with $K$ of zero entropy and that $Q$ is a partition of $M$ such that $P_d\times Q$ is generating for  $T$. Then $T$ is very weak Bernoulli with respect to $P_d\times Q$ if and only if for every $\ve > 0$, there exists a set $G \subset \Sigma_d^- \times M$ with $\mu_d^- \times \nu(G) > 1-\ve$ such that if $(x^-,y),(\bar{x}^-,\bar{y}) \in G$, there exists $\Phi_{(x^-,y),(\bar{x}^-,\bar{y})} : \Sigma_d^+ \to \Sigma_d^+$ preserving $\mu_d^+$ and a set $L\subset \Sigma_d^+$ such that:

\begin{enumerate}[(i)]
\item $\mu_d^+(L) < \ve$
\item $\#\left\{\begin{array}{c}i \in [1,N] : T^i(x^-,x^+,y)\;\;\mbox{and}\;\;T^i (\bar{x}^-,\Phi(x^+),\bar{y})\\
\mbox{are in the same}\; P_d\times Q\;\mbox{atom}\end{array}\right\} \ge (1-\ve) N$ if $x^+ \not\in L$.
\end{enumerate}
\end{proposition}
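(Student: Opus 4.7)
The plan is to identify atoms of the past--present algebra $\mathcal P := \bigvee_{i=0}^\infty T^i(P_d \times Q)$ with fibers of the projection $\pi : \Sigma_d^- \times \Sigma_d^+ \times M \to \Sigma_d^- \times M$, $\pi(x^-, x^+, y) = (x^-, y)$, and then translate Definition \ref{def:VWB-general} along this identification. Once this is done, a set $G$ in the general definition, being $\mathcal P$-measurable, corresponds to $\pi^{-1}(\tilde G)$ for some measurable $\tilde G \subset \Sigma_d^- \times M$, which is the set called $G$ in the proposition's simpler formulation. An atom $r = \pi^{-1}(x^-, y)$ of $\mathcal P$ becomes a copy of $\Sigma_d^+$ on which the conditional measure $\mu^r$ equals the Bernoulli measure $\mu_d^+$, thanks to the product structure $\mu = \mu_d^- \otimes \mu_d^+ \otimes \nu$. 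A measure-preserving map $\Phi_{r, \bar r}$ between atoms is then the same datum as a $\mu_d^+$-preserving self-map $\Phi_{(x^-, y), (\bar x^-, \bar y)}$ of $\Sigma_d^+$.

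To prove the atom identification, one inclusion is elementary: sets of the form $\pi^{-1}(A)$ are $\mathcal P$-measurable because the past $P_d$-coordinates of the $T$-orbit read off $x^-$, while the $Q$-coordinate at time $0$ together with the $Q$-atoms of $K_{-\varphi_i(S^{-i}x)}(y)$ for $i \geq 0$ (all at times depending only on $x^-$) give enough information to recover $y$ measurably. The reverse inclusion, asserting that $\mathcal P$ does not carry strictly more information than the pair $(x^-, y)$, is where zero entropy of $K$ is essential. This is precisely the content of the corresponding step in \cite{burton-shields}, where the case $d=2$ with uniform weights is proved; the argument goes through verbatim for general $d$ and weights $(p_1, \dots, p_d)$, since only the product structure of $\Sigma_d$ and the zero entropy of the fiber are used in their reasoning.

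Given the identification of atoms, the translation of the two definitions is mechanical. The global measure condition $\mu(G) > 1 - \ve$ becomes $(\mu_d^- \otimes \nu)(\tilde G) > 1 - \ve$ because $\mu \circ \pi^{-1} = \mu_d^- \otimes \nu$. The exceptional set $L \subset r$ with $\mu^r(L) < \ve$ becomes a set $L \subset \Sigma_d^+$ with $\mu_d^+(L) < \ve$. Condition (ii) of Definition \ref{def:VWB-general} translates verbatim to condition (ii) in the proposition, as the $(P_d \times Q)$-code of $T^i(x^-, x^+, y)$ is exactly the quantity being compared in the simpler statement. In the forward direction (general VWB implies the simpler form) one applies this dictionary directly; in the reverse direction (simpler form implies general VWB) one defines $\Phi_{r,\bar r}$ by first fixing the bijection between $r$ and $\Sigma_d^+$ coming from the identification, and then applying $\Phi_{(x^-, y), (\bar x^-, \bar y)}$.

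The main obstacle is the atom identification, which uses the zero entropy hypothesis on the fiber in a nontrivial way; I expect to reduce this directly to the corresponding lemma in \cite{burton-shields}, pointing out only that neither the specific value $d = 2$ nor the uniform choice of weights enters their argument. The remainder is routine bookkeeping: checking that measures, $\sigma$-algebras, and the code-matching condition all transport correctly along the bijection between atoms of $\mathcal P$ and points of $\Sigma_d^- \times M$.
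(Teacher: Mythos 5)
Your overall plan is the same as the paper's: identify atoms of $\mathcal P := \bigvee_{i=0}^\infty T^i(P_d\times Q)$ with fibers $\{s^-\}\times\Sigma_d^+\times\{y_0\}$, and then translate Definition \ref{def:VWB-general} along the resulting bijection between atoms and copies of $\Sigma_d^+$. The paper packages the identification as Lemma \ref{lemma:bslemma1} (proved via the Abramov--Rokhlin formula and the generating hypothesis), and explicitly notes it is analogous to \cite[Lemma 1]{burton-shields}; you defer to \cite{burton-shields} directly. The remaining bookkeeping is as you describe. So the approach is correct and matches the paper.

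One conceptual slip that should be corrected: you have the two inclusions of the atom identification swapped. The direction you call ``elementary'' --- that sets $\pi^{-1}(A)$ are $\mathcal P$-measurable, i.e.\ $\sigma(x^-,y)\subset\mathcal P\pmod 0$ --- is the nontrivial one: your own justification, ``give enough information to recover $y$ measurably,'' is precisely what must be proved, and it is where zero entropy enters (otherwise the countable family of $Q$-codes along negative times need not separate points of the fiber). Conversely, the direction you say needs zero entropy --- that $\mathcal P$ carries no more information than $(x^-,y)$, i.e.\ $\mathcal P\subset\sigma(x^-,y)$ --- is the trivial one: for $i\geq0$, $T^{-i}(x,y)=(S^{-i}x,K_{\varphi_{-i}(x)}y)$, and both its $P_d$-atom and its $Q$-atom are measurable functions of $(x_{-i},\dots,x_0)$ and $y$ alone, so every generator $T^i(P_d\times Q)$ of $\mathcal P$ is already $\sigma(x^-,y)$-measurable with no dynamical input. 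Since you ultimately lean on \cite{burton-shields} for the whole identification (and you are right that their argument does not use $d=2$ or uniform weights), this mislabelling does not derail the plan, but as written it would make the ``elementary'' half of the argument circular, and it obscures where the zero-entropy hypothesis is actually doing work.
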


The following is analogous to \cite[Lemma 1]{burton-shields} and will be used to prove Proposition \ref{prop:skew-vwb-def}.
\begin{lemma}
\label{lemma:bslemma1}
Assume $T$ takes the form (\ref{eq:bern-skew}), where $K_t$ is of zero entropy and let $Q$ be a partition such that $P_d\times Q$ is generating for $T$. Then there is a set of full measure $\tilde X\subset X$ such that if $r$ is an atom of the past partition $\displaystyle \bigvee_{i=0}^\infty T^i(P_d\times Q)\cap \tilde X$ then there is a sequence $s^-\in \Sigma_d^-$ and $y_0\in M$ such that $$r=\{(x,y)\;:\;x^-=s^-, y = y_0\}=\left(\{s^-\}\times\Sigma_d^+\right)\times\{y_0\}$$ i.e. the second coordinate of the atom $p$ is determined.
\end{lemma}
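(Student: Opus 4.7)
The plan is to show that any past atom lying in a suitable full-measure set $\tilde X$ has the form $\{s^-\} \times \Sigma_d^+ \times \{y_0\}$. This splits into two tasks: (i) showing that the base past $x^-$ is determined by the past $(P_d \times Q)$-name, and (ii) showing that the fiber coordinate $y$ is also determined.

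For (i), the projection $\pi_1 : X \to \Sigma_d$ is a factor map intertwining $T$ with the Bernoulli shift $S$, and $P_d$ is the time-zero cylinder partition of $S$. Projecting the past $(P_d \times Q)$-name of $(x,y)$ through $\pi_1$ recovers the past $P_d$-name of $x$ under $S$, which is precisely the sequence $(x_{-i})_{i \geq 0} = s^-$. Hence the past atom is contained in $\{s^-\} \times \Sigma_d^+ \times M$ and $s^-$ is uniquely determined.

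For (ii), write $T^{-i}(x,y) = (S^{-i}x, K_{\varphi_{-i}(x)}y)$ with the cocycle sum $\varphi_{-i}(x) := -\sum_{j=1}^{i}\varphi(S^{-j}x)$, which depends only on $x^-$. Thus for any two points $(x,y),(x',y')$ sharing a past atom, part (i) forces $x^- = (x')^- = s^-$, and so $t_{-i} := \varphi_{-i}(x) = \varphi_{-i}(x')$. The past-atom condition on the second coordinate becomes: $K_{t_{-i}}y$ and $K_{t_{-i}}y'$ lie in the same $Q$-atom for every $i \geq 0$. Writing $\mathcal{F}_{s^-} := \sigma\bigl(\{K_{t_{-i}}^{-1}A : A \in Q,\, i \geq 0\}\bigr) \subset \mathcal{B}_M$, the task reduces to showing $\mathcal{F}_{s^-} = \mathcal{B}_M$ modulo null sets for $\mu_d^-$-a.e.\ $s^-$.

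The main obstacle is the passage from two-sided to one-sided generation. The hypothesis that $P_d\times Q$ generates for $T$ yields the two-sided version $\sigma\bigl(\{K_{\varphi_i(x)}^{-1}A : A \in Q,\, i \in \Z\}\bigr) = \mathcal{B}_M$ mod null for a.e.\ $x$, which uses both positive and negative indices. The zero-entropy hypothesis on $(K_t)$ is precisely what is needed to drop the positive indices: for any $T > 0$ the discrete transformation $K_T$ has zero entropy, so by Rokhlin--Sinai applied to $K_T$ the one-sided past of any generating partition under $K_T$ already generates the full $\sigma$-algebra modulo null sets. Combining this with Birkhoff's theorem (which gives $t_{-i}/i \to -\int\varphi \ne 0$, so the $t_{-i}$'s are asymptotically regular) and with the ergodicity of $(S,\mu_d)$ (which controls how the $t_{-i}$'s interlace an arithmetic progression), one transfers the Rokhlin--Sinai conclusion to the irregular times $\{t_{-i}\}_{i\ge 0}$ and obtains $\mathcal{F}_{s^-} = \mathcal{B}_M$ mod null for a.e.\ $s^-$. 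Taking $\tilde X$ to be the intersection of the relevant full-measure sets of generic points (Birkhoff-generic for $\varphi$, fiber-generic for the above one-sided generation) then concludes the proof.
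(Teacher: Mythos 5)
Your part (i), the determination of $s^-$ via projection, is correct and matches what is implicit in the paper. The gap is in part (ii). You correctly reduce the problem to showing that the one-sided family $\bigvee_{i\leq 0}K_{t_i}^{-1}Q$ already equals $\mathcal{B}_M$ mod null, but your mechanism for this is not a proof. Applying Rokhlin--Sinai to a fixed-time map $K_T$ gives one-sided generation for $K_T$-iterates of a partition that generates under $K_T$; neither hypothesis is available for the irregular times $t_{-i}=\varphi_{-i}(x)$. The $\sigma$-algebra $\bigvee_{i\leq 0}K_{t_i}^{-1}Q$ depends on the exact set of return times, not just on their asymptotic density: even for a zero-entropy discrete system and a generating $P$, replacing the times $\{i\}_{i\geq 0}$ with $\{2i\}_{i\geq 0}$ can produce a strictly coarser $\sigma$-algebra. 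So the step ``Birkhoff + ergodicity lets one transfer the Rokhlin--Sinai conclusion to the irregular times'' is precisely the crux and is not justified; I do not see how to make the ``interlacing'' rigorous without importing the skew-product structure you are trying to avoid.

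The paper's proof handles this cleanly by working directly with the relative (fiber) entropy. Since $h(K)=0$, the Abramov--Rokhlin formula gives $h(T)=h(S)+h(T\,|\,\mathcal{F}_M)$ with $h(T\,|\,\mathcal{F}_M)=0$, and for a.e.\ $x$ this conditional entropy equals $H\bigl(\{x\}\times Q \mid \bigvee_{i\geq 1}T^i(\{S^{-i}x\}\times Q)\bigr)$. Vanishing of this conditional entropy says the fiber past refines the fiber present for a.e.\ base point, i.e. $\bigvee_{i\geq 0}T^i(\{S^{-i}x\}\times Q)=\bigvee_{i\in\Z}T^i(\{S^{-i}x\}\times Q)$, which is a single point since $P_d\times Q$ generates. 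The relative-entropy formulation absorbs the irregular times automatically (they are built into the skew product), which is exactly what your approach is missing. I would recommend replacing the transfer heuristic with the Abramov--Rokhlin computation.
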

\begin{proof}[Proof of Lemma \ref{lemma:bslemma1} ]
On one hand we know that $h(S)=h(T)$. On the other hand, by Abramov-Rokhlin formula we have that $h(T)=h(S)+h(T|\mathcal F_M)$ where $h(T|\mathcal F_M)$ is the entropy condioned to the $M$-fibers. To be more precise for a.e. $x\in \Sigma_d$ we have that $$h(T|\mathcal F_M)=H\left(\{x\}\times Q\left|\displaystyle \bigvee_{i=1}^\infty T^i(\{S^{-i}(x)\}\times Q)\right)\right.$$
Since $h(T|\mathcal F_M)=0$ we get that $$\{x\}\times Q \succ \displaystyle \bigvee_{i=1}^\infty T^i(\{S^{-i}(x)\}\times Q)$$ for almost every $x$. This implies that $$\displaystyle \bigvee_{i=0}^\infty T^i(\{S^{-i}(x)\}\times Q)=\displaystyle \bigvee_{i=-\infty}^\infty T^i(\{S^{-i}(x)\}\times Q)$$ for almost every $x$. Since $P_d\times Q$ is generating we have that the right hand side is a single point and the proof follows.
\end{proof}

\begin{proof}[Proof of Proposition \ref{prop:skew-vwb-def}]
From Lemma \ref{lemma:bslemma1} we get that restricted to the set of full measure $\tilde X$, the atoms of the partition $\displaystyle \bigvee_{i=0}^\infty T^i(P_d\times Q)$ are identified naturally with $\Sigma_d^+$. With this identification, Proposition \ref{prop:skew-vwb-def} follows immediately from the definition of VWB. 
\end{proof}

The next lemma shows that a finite partition $Q$ such that $P_d\times Q$ is generating for $T$ always exists. 
\begin{lemma}\label{generatingpartition}
Assume $T$ is ergodic, $K$ has finite entropy and $\varphi\in L^\infty$. Then there is a finite partition $Q$ of $M$ and a subset $\tilde X$ of $X$ of full measure such that $(P_d\times Q)\cap \tilde X$ is a generating partition for $T$.
\end{lemma}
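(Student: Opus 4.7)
The plan is to combine Krieger's generator theorem with a structural argument that exploits the skew-product form.

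First I would verify that $h(T)<\infty$. The Abramov--Rokhlin formula recalled in the introduction gives $h(T)=h(S)+h(K)\,|\int\varphi|$, and each factor is finite by hypothesis ($h(S)\le\log d$, $h(K)<\infty$, $|\int\varphi|\le\|\varphi\|_\infty<\infty$). Since $T$ is ergodic of finite entropy, Krieger's theorem supplies a finite partition generating $\mathcal B(X)$ under $T$.

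Next I would construct $Q$ directly by applying Krieger's generator theorem to the $\R$-action $K_t$ (which has finite entropy since $h(K_1)<\infty$) to obtain a finite partition $Q$ of $M$ with $\bigvee_{t\in\R}K_t^{-1}Q=\mathcal B(M)$ mod $\nu$. The claim is then that $P_d\times Q$ generates $T$. Separation in the base coordinate is immediate since $P_d$ is $S$-generating. For the fiber coordinate I need, for $\mu_d$-a.e.\ $x$, that the pulled-back partition $\mathcal Q_x:=\bigvee_{n\in\Z}K_{-\varphi_n(x)}^{-1}Q$ generates $\mathcal B(M)$ mod $\nu$. The key structural identity is $\mathcal Q_{Sx}=K_{-\varphi(x)}^{-1}\mathcal Q_x$, which shows that the set $\{x:\mathcal Q_x=\mathcal B(M) \text{ mod } \nu\}$ is $S$-invariant and hence has measure $0$ or $1$ by ergodicity of $S$. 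To rule out the zero case I would invoke the ergodicity of $T$ together with the flow-generating property of $Q$: the random sample of times $\{\varphi_n(x)\}_{n\in\Z}$ spreads densely enough along $\R$ that iterates of $Q$ through these times still separate fiber points on a $\nu$-full set. Finally, let $\tilde X$ be the full-measure subset where all of these generator and separation identifications hold.

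The main obstacle is the last step: converting ergodicity of $T$ and the flow-generating property of $Q$ into the assertion that the discrete cocycle sample $\{\varphi_n(x)\}$ suffices to recover $\mathcal B(M)$ from a single fixed finite partition. The bounded-cocycle hypothesis $\varphi\in L^\infty$ enters in a genuine way here, by keeping the fiber displacement tame enough that a uniform (i.e.\ $x$-independent) choice of $Q$ works simultaneously along $\mu_d$-a.e.\ base orbit.
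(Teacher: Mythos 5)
There is a genuine gap at the step you yourself flag as ``the main obstacle,'' and it is not a small one. The set $\{x : \mathcal Q_x = \mathcal B(M) \bmod\nu\}$ being $S$-invariant and hence null or conull is correct, but nothing in your argument rules out the null case. A finite partition $Q$ that generates $\mathcal B(M)$ under the \emph{entire} flow $\{K_t\}_{t\in\R}$ need not generate under a discrete (and $x$-dependent, non-arithmetic) sample of times $\{\varphi_n(x)\}_{n\in\Z}$ -- the flow-generating property only tells you the real-time refinement is everything, not that any particular countable subfamily of times already does the job. Ergodicity of $T$ gives equidistribution of the orbit of $(x,y)$ in $X$, not separation of fiber points under $K_{\varphi_n(x)}^{-1}Q$. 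There is also a more foundational issue: Krieger's generator theorem is a statement about ergodic $\Z$-actions; there is no off-the-shelf ``finite generating partition for the $\R$-action $K_t$'' in the sense you need, and the notion would have to be made precise before the argument could even start.

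The paper sidesteps both difficulties by a more concrete construction. It first uses Krieger and an inducing step to realize $K$ as a special flow over a two-shift with a roof $f$ that is nearly constant ($|f-\int f|<\epsilon$ with $\epsilon<\int f/100$), and after a second inducing step arranges $\|\varphi\|_\infty<\inf f$, so that the fiber displacement count satisfies $|N(w,s,\varphi(x))|\le 1$ at every step. The partition $Q$ is then taken to be an explicit product partition (length-3 cylinders in the base crossed with time intervals of length $1/H$), and the generating property is proved directly: two points in the same $\bigvee_{i\in\Z}T^i(P_d\times Q)$-atom are shown by a combinatorial claim to have \emph{equal} jump counts at every step, which forces equal shift coordinates, and a second claim using ergodicity of $T$ forces the time coordinates to coincide. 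In other words, $Q$ is not chosen to be a flow-generator at all; the quantitative control ($\epsilon$ small, roof nearly constant, bounded jumps) is what makes the discrete-time refinement separate points, and this is exactly the ingredient your proposal lacks. Your observation that $\varphi\in L^\infty$ ``keeps the fiber displacement tame'' is the right intuition, but to make it a proof one needs the kind of normalization and explicit bookkeeping the paper performs.
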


Observe that in our situation $K$ is of zero entropy. The proof of this Proposition is contained in the Appendix.

\subsection{Smooth Dynamics and Bernoulli Shifts}
\label{sec:reduce-to-bern}

In this section, we relate the combinatorial structures above with smooth dynamical systems. Let $A : \T^2 \to \T^2$ be an hyperbolic toral automorphism. Every such automorphism has a Markov partition, which induces an almost one-to-one semiconjugacy $h : \Sigma_A \to \T^2$ taking a Markov measure on the subshift of finite type $\Sigma_A$ to Lebesgue measure on $\T^2$. Since it is almost one-to-one, it is also a measurable conjugacy (not just semiconjugacy, as it is in the topological world). Furthermore, this map is H\"older continuous, so the pullback of any H\"older (in particular, smooth) function on $\T^2$ will remain H\"older in $\Sigma_A$.

Since any subshift of finite type is also measurably isomorphic to a Bernoulli shift, we may further get a measurable isomorphism $h' : \Sigma_d \to \Sigma_A$ taking a Bernoulli measure with weights $(p_1,\dots,p_d)$ to the desired Markov measure on $\Sigma_A$.

\begin{remark}
$h'$ is only measurable, so the pullback of a H\"older continuous function $\varphi$ on $\Sigma_A$ or $\T^2$ may (and most likely will) fail to be H\"older. However, any statistical asymptotics made for a H\"older continuous function on $\Sigma_A$ or $F$ can also be made for its pullback by $h'$, since the dynamics and measures are both intertwined.
\end{remark}

\subsection{A combinatorial result which implies Theorem \ref{thm:main}}\label{main.vwb}
For a partition $Q$ of $M$, $(x,y),(x',y') \in \T^2\times M$ and $N\in \N$ let
\begin{equation}\label{DNQ}
D_N^Q\left((x,y),(x',y')\right):=\frac{\left|\{i\in [0,N-1]\;:\; K_{\vphi_i(x)}(y), K_{\vphi_i(x')}(y') \text{ are in one atom of } Q\}\right| }{N}.
\end{equation}
We will show the following:
\begin{theorem}\label{thm:vwb} There exist a set $B\subset \T^2\times M$, $\mu\times \nu(B)>9/10$, a measurable family of sets $(C_y)_{y\in M}$, $C_y\subset M$, $\nu(C_y)\geq 9/10$ and a partition $Q$ of $M$ such that for every $(x,y)\in B$, $(x',y')\in B\cap \T^2\times C_y$ and 
for every $N\in\N$ 
\begin{equation}\label{eq:ham}
D_N^Q\left((x,y),(x',y')\right)<9/10.
\end{equation}
\end{theorem}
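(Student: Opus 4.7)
The strategy is to define a finite partition $Q$ of $M$ and to use the sets $B$ and $\{C_y\}_{y\in M}$ provided by Proposition \ref{techn}, whose six properties give detailed control on the cocycle $\vphi$, the roof $f$, and the separation between orbits. Given these, I would show that for $(x,y)\in B$ and $(x',y')\in B\cap(\T^2\times C_y)$, at least a fixed positive fraction (strictly greater than $1/10$) of the indices $i\in[0,N-1]$ produce iterates $K_{\vphi_i(x)}(y), K_{\vphi_i(x')}(y')$ lying in distinct atoms of $Q$, uniformly in $N$.

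First I would fix a small scale $\delta>0$, calibrated to the vertical stretch scale supplied by Property 4 of Proposition \ref{techn}, and choose $Q$ to be a finite partition of $M=\T\times\R/\sim$ into boxes of diameter less than $\delta$, so that lying in a common atom forces both the horizontal and vertical coordinates to be $\delta$-close. Properties 1 and 2 of Proposition \ref{techn} give $\mu\times\nu(B)>9/10$ and $\nu(C_y)\geq 9/10$ directly from the construction of $B$ (via an Egorov-type argument controlling $\vphi_n$ through the CLT and $R_\alpha^j y$ away from the singularity of $f$) and $C_y$ (points whose forward orbit avoids a small neighborhood of $y$ in short time).

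The heart of the argument is the vertical stretch, Property 4. For $(x,y)\in B$ the cocycle $\vphi_i(x)$ concentrates around $i\vphi_0$ with controlled CLT-type fluctuations, and likewise for $\vphi_i(x')$. Hence $K_{\vphi_i(x)}(y)$ and $K_{\vphi_i(x')}(y')$ sit at times roughly $i\vphi_0$ on the Kochergin orbits of $y$ and $y'$ respectively, but with a small relative time shift driven by $\vphi_i(x)-\vphi_i(x')$ and the initial horizontal offset $y-y'$. Because $f'_n$ is of order $n^{2-\eta}$ for most $n$ (Proposition \ref{mpr}), this shift is amplified into macroscopic vertical separation in the $s$-coordinate for a positive-density set $\cI_N\subset[0,N-1]$, with $|\cI_N|/N$ bounded below by a constant strictly greater than $1/10$; for $i\in\cI_N$ the two iterates lie in different atoms of $Q$. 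Property 5, a Diophantine consequence via Lemma \ref{cons:dis} and the Denjoy--Koksma estimates of Lemma \ref{koksi}, guarantees that the proportion of $j$ for which $R_\alpha^j y$ visits a microscopic neighborhood of the singularity is small, so that the estimate on $f'_n$ is applicable; Property 3 rules out that the initial points $y,y'$ already coincide in $Q$, and Property 6 handles the wrap-around in the quotient identification $(y,s+f(y))\sim(R_\alpha y,s)$, preventing spurious matches from the vertical separation being absorbed by a fiber translation. Combining these, at least $\lceil N/10\rceil+1$ of the indices in $[0,N-1]$ produce non-matches, yielding $D_N^Q<9/10$ for every $N\in\N$.

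The main obstacle is Property 4. Because the Kochergin roof $f$ is essentially symmetric about its singularity, the derivatives $f'(R_\alpha^j y)$ partially cancel in the Birkhoff sum $f'_n$, and a priori $f'_n$ may fail to supply the expected order $n^{2-\eta}$ of stretch at many times. The quantitative control on the ``bad'' set of cancellation times is exactly Proposition \ref{mpr}, whose proof requires genuinely probabilistic ingredients (Section \ref{p:mpr}). All other ingredients---construction of $B$, construction of $\{C_y\}$ and verification of Properties 1, 2, 3, 5, 6---are comparatively routine consequences of Egorov, the Diophantine condition $\alpha\in\cD$, and Lemma \ref{koksi}, so the entire difficulty of Theorem \ref{thm:vwb} is concentrated in establishing this vertical stretch.
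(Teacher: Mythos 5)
Your overall architecture (take $B$, $\{C_y\}$ from Proposition \ref{techn}, use a partition adapted to a small scale, and locate the difficulty in the derivative growth of Proposition \ref{mpr}) matches the paper's, but the actual derivation of the bound $D_N^Q<9/10$ is not carried out, and several of the ingredients you invoke are misidentified. First, the partition you propose cannot exist: since the roof is unbounded, $M$ admits no finite partition all of whose atoms have diameter $<\delta$. The paper's $Q$ has one atom of unbounded diameter over the cusp, and the estimate then needs the orbit of $y$ to avoid that atom along a set $U_N\subset[0,N]$ of density $\geq 9/10$ (properties 1 and 2 of Proposition \ref{techn}); this $U_N$ bookkeeping, which produces the $1/10$ in the final arithmetic, never appears in your argument. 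Relatedly, you misread the roles of the listed properties: the measure bounds on $B$ and $C_y$ are part of the existence clause, not properties 1--2; property 3 says that for $N\leq N_3$ there are no $\xi_3$-close times along $U_N$ at all; property 5 is the horizontal rigidity dichotomy from Lemma \ref{cons:dis} (over a time $\sim R_m/\log^5R_m$ the horizontal distance is either exactly preserved or multiplied by $100$), not a statement about visits to the singularity; and property 6 is the lower bound $d_H(y_i,y'_i)\geq 1/(N\log^6N)$, not a wrap-around statement.

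The genuine gap is the central counting step. You assert that CLT fluctuations of $\vphi_i(x)-\vphi_i(x')$ together with $f'_n\sim n^{2-\eta}$ give a set $\cI_N$ of non-matching times of density $>1/10$ uniformly in $N$, but this is precisely the statement to be proved, and the mechanism you describe is off: the cocycle difference (which may vanish identically, e.g.\ when $x=x'$) is not what the shear amplifies; it is the horizontal offset that gets amplified, and the CLT bound serves only to show the cocycle term $O(n^{1/2}\log n)$ cannot cancel the shear term $\geq n^{3/4}$ (Lemma \ref{sub2}). Moreover, separation at time $i$ depends on the current horizontal distance, which varies along the orbit, and at a fixed closeness scale properties 4--5 only yield a power saving ($\leq 3R_m^{(1-10\eta)(1-\eta_0)}$ matches per window of length $\sim R_m^{1-10\eta}$), not a constant fraction. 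The paper gets the constant by stratifying the matching times into the dyadic sets $A_j^{N,\xi_0}$ of \eqref{anj}, cutting $[0,N]$ into windows of length $2^{j(1-10\eta)}$, freezing the horizontal distance in each window via property 5, bounding matches per window via property 4, using properties 3 and 6 to control small $N$ and the admissible range of scales, and only then summing the geometric series over $j$ (with $\xi_0<\frac{2^{\eta_0}-1}{2}$) to get a matching fraction $\leq 1/10+1/2<9/10$. Without this scale-by-scale decomposition (or an equivalent substitute), your uniform-in-$N$ density claim is unsupported, so the proof as proposed does not go through.
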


Theorem \ref{thm:vwb} is the most important combinatorial result of the paper. Before we prove Theorem \ref{thm:vwb}, let us show how it implies Theorem \ref{thm:main}.
\begin{proof}[Proof of Theorem \ref{thm:main}]
We begin by using the observations of Section \ref{sec:reduce-to-bern} to write the system $A_\vphi^{(K_t)}$ as the skew product $T$ of a zero-entropy dynamical system over a Bernoulli shift on $\Sigma_d$. We may then pull back the structures of Theorem \ref{thm:vwb} to these more convenient combinatorial structures. To simplify the notation we denote the pullbacked structures by the same symbols as the original ones.

Observe that if a partition $\tilde Q \succ Q$ then $D^{\tilde Q}_N\leq D^Q_N$, so we may refine the partition in Theorem \ref{thm:vwb} with the one in Lemma \ref{generatingpartition} and assume without loss of generality that $P_d\times Q$ is generating for $T$. In particular, given $\ve > 0$, we may find the partition $Q$ of $M$ guaranteed by Theorem \ref{thm:vwb} such that $P_d\times Q$ is generating for $T$. We will show that $T$ fails to be very weak Bernoulli with respect to $Q$ and $\ve=1/100$ using the criteria of Proposition \ref{prop:skew-vwb-def}. We assume otherwise, and let $G\subset\Sigma^-_d\times M $ be the set in Proposition \ref{prop:skew-vwb-def}, $\mu^-_d\times\nu(G)\geq 99/100$. We now work in the set of quadruples $(x^-,y,\bar{x}^-,\bar{y})\in\Sigma^-_d\times M\times\Sigma^-_d\times M$, and let $Z$ denote such a quadruple. Then the conclusion of Proposition \ref{prop:skew-vwb-def} is the existence of a family of measure-preserving functions $\Phi_Z : \Sigma_d^+ \to \Sigma_d^+$ for $Z \in G \times G$ such that $D_N^Q((x^-,x^+,y),(\bar{x}^-,\Phi_Z(x^+),\bar{y})) \geq 99/100$ whenever $x^+$ is in a set $L=L_Z$ with $\mu_d^+(L) \ge 99/100$.
 
Let $$\hat G=\{(x^-,y,\bar x^-,\bar y)\in G\times G\;:\; \bar y\in C_y\}.$$ By Fubini's Theorem we have that $(\mu_d^-\times \nu \times \mu_d^-\times \nu)(\hat G)\geq 88/100$.
 
Now let $B \subset \Sigma_d \times M = \Sigma_d^- \times \Sigma_d^+ \times M$ be the set from Theorem \ref{thm:vwb} of measure $>9/10$. Given $(x^-,y)\in\Sigma^-_d\times M$, let $$B_{(x^-,y)} = \{ x^+ \;:\; (x^-,x^+,y) \in B\} \subset \Sigma_d^+$$ be the ``slice'' of $B$ trough $\Sigma_d^+$. Let $B'\subset \Sigma_d^- \times M $ denote the set of pairs $(x^-,y)$ such that $\mu_d^+(B_{(x^-,y)}) > 8/10.$ Then using Fubini's Theorem again $$\frac{9}{10}\leq(\mu\times\nu)(B)\leq(\mu^-_d\times\nu)(B')+(1-(\mu^-_d\times\nu)(B'))\frac{8}{10},$$ and hence $(\mu^-_d\times\nu)(B')\geq \frac{1}{2}$. So, $(\mu^-_d\times\nu\times\mu^-_d\times\nu)(\hat G\cap (B'\times B'))\geq\frac{13}{100}>0.$

 Pick $Z=(x^-,y,\bar x^-,\bar y)\in \hat G\cap (B'\times B')\subset G\times G$. Let $\Phi_Z : \Sigma_d^+ \to \Sigma_d^+$ such that $D_N^Q((x^-,x^+,y),(\bar{x}^-,\Phi_Z(x^+),\bar{y})) \geq 99/100$ whenever $x^+$ is in a set $L$ with $\mu_d^+(L) \ge 99/100$. Since $(x^-,y,\bar x^-,\bar y)\in B'\times B'$, we have that $\mu_d^+(B_{(x^-,y)}) > 8/10$ and $\mu_d^+(B_{(\bar x^-,\bar y)}) > 8/10$, hence $\mu_d^+(B_{(x^-,y)}\cap L) > 79/100$ and since $\Phi_Z$ is measure preserving $$\mu_d^+(\Phi_Z(B_{(x^-,y)}\cap L))\cap B_{(\bar x^-,\bar y)})\geq 59/100>0.$$ Take $x^+\in B_{(x^-,y)}\cap L$ such that $\bar x^+:=\Phi_Z(x^+)\in B_{(\bar x^-,\bar y)}$, i.e. $(x^-,x^+,y)\in B$ and $(\bar x^-,\bar x^+,\bar y)\in B$. Then, on one hand $$D_N^Q((x^-,x^+,y),(\bar{x}^-,\Phi_Z(x^+),\bar{y})) \geq 99/100$$ since $Z\in G\times G$ and $x^+\in L$ but on the other hand $(x^-,x^+,y)\in B$, $(\bar x^-,\bar x^+,\bar y)\in B$ and $\bar y\in C_y$ (since $Z\in \hat G$), so by Theorem \ref{thm:vwb} $$D_N^Q((x^-,x^+,y),(\bar{x}^-,\bar x^+,\bar{y})) \leq 9/10,$$ a contradiction.  \end{proof}

Therefore it remains to prove Theorem \ref{thm:vwb}.

\section{Discussion and proof of Theorem \ref{thm:vwb}}\label{vwb.cond}

\subsection{Some Explanation}

\subsubsection{Additional Notations}

For $(x,y),(x',y')\in \T^2\times M$ and $n\in\Z$, we denote with  $y_n:=K_{\vphi_n(x)}y$ and for $(x',y')$, $y_n':=K_{\vphi_n(x')}y'$ and define $R_n:=d_H(y_n,y'_n)^{-1}$, observe that $R_0=d_H(y,y')^{-1}$. Given $(x,y),(x',y')\in \T^2\times M$, $\xi>0$ and $N,j\in \N$ denote
\begin{multline}\label{anj}
A_j^{N,\xi}(x,y,x',y'):=\{n\in [0,N]\;:\;d(y_n,y'_n)<\xi
\;\text{ and }\;
2^{j}\leq R_n< 2^{j+1}\}.
\end{multline}
Observe that the second condition is the same as $2^{-j-1}<d_H(y_n,y'_n)\leq 2^{-j}$.

\subsubsection{A Summary of Technical Content}

This section contains the main technical tools for establishing the criteria of Theorem \ref{thm:vwb}. The convenient partition $Q$ is a partition into boxes of small diameter with the neighbourhood of the cusp being one atom.
The first tool is Proposition \ref{techn1}. It shows that for choices of $(x,y),(x',y')$  in $B$ their orbits stay away from the cusp for most of the times (properties a.,b.). (Property c.) says that for these good times they cannot stay at a similar level of closeness for long  (with the level of closeness measured by the sets $A_j^{N,\xi_0}$). In fact we have an exponential estimate (given by $\eta_0$). Summing up over all $j\in \N$ and using c. gives the assertion of Theorem \ref{thm:vwb}. So it remain to prove Proposition \ref{techn1}. This is done by the use of Proposition \ref{techn}, which is also the most technical and involved part of the paper.




\begin{figure}[!ht]
\centering
\begin{minipage}{2.5in}
\centering
\includegraphics[width=2.5in]{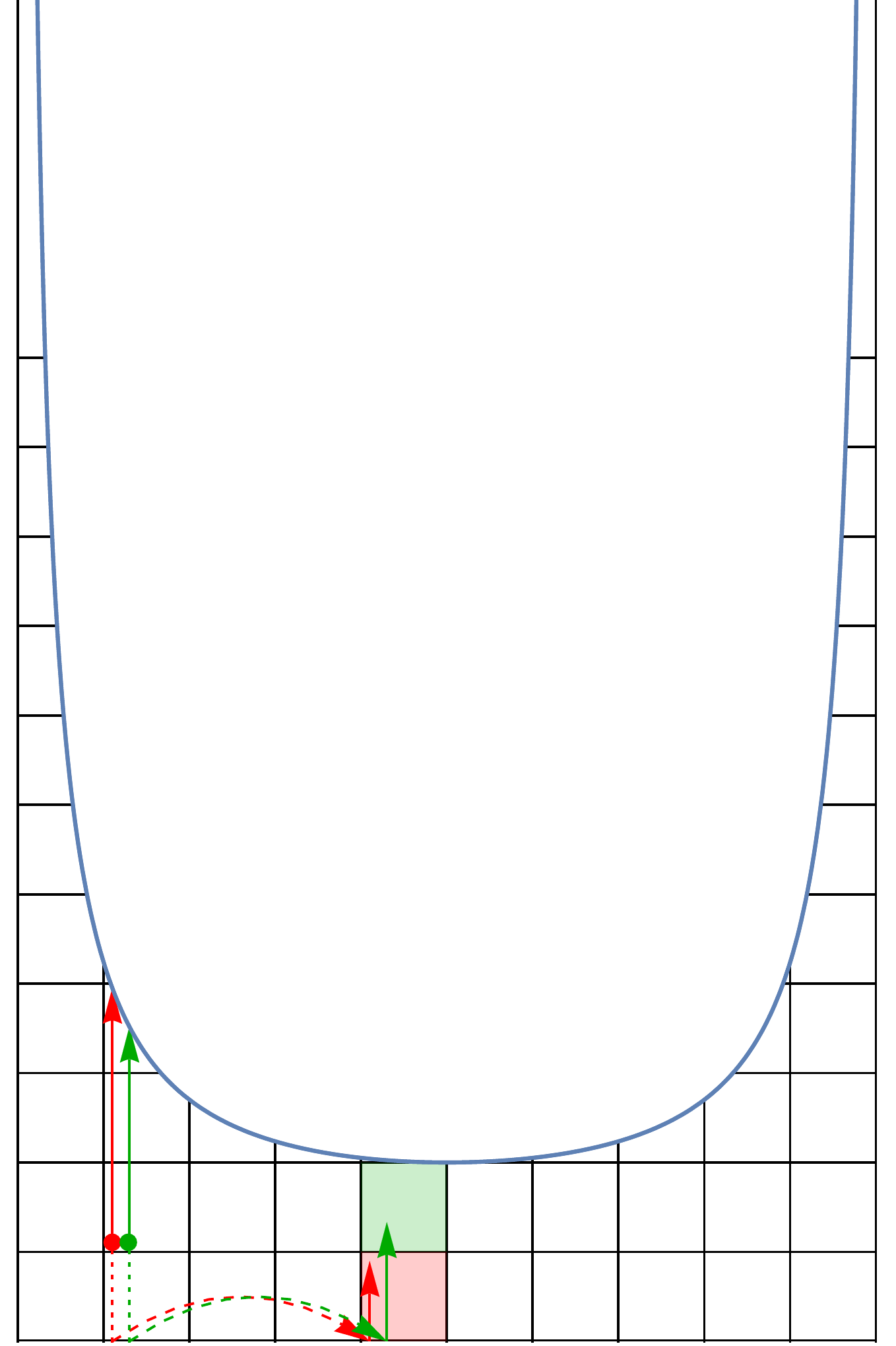}
\captionof{figure}{Vertical Separation, $f$ and $\varphi$ have moderate differences}
\label{fig:vert-sep}
\end{minipage}\hspace{.5in}%
\begin{minipage}{2.5in}
\centering
 \includegraphics[width=2.5in]{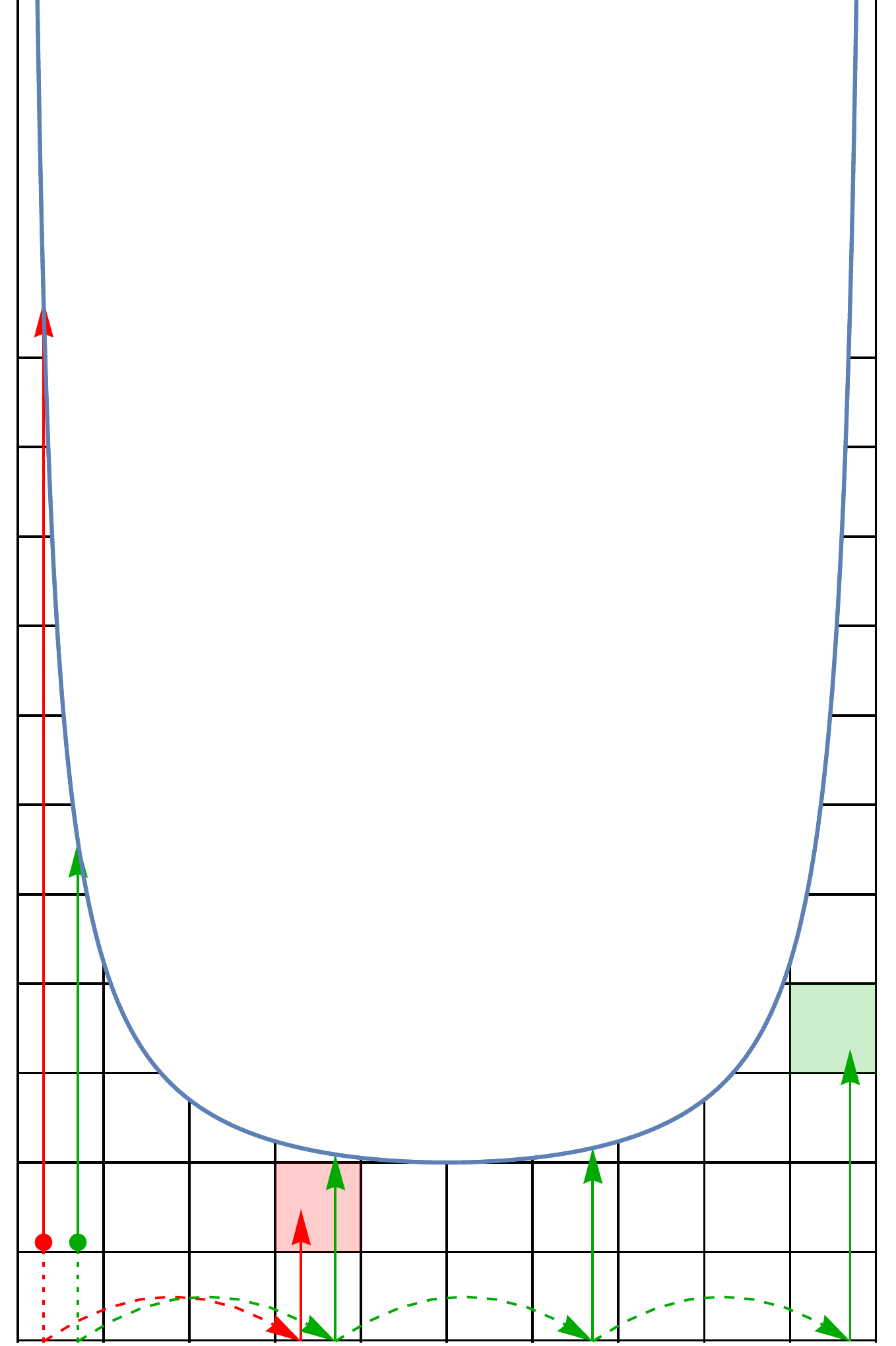}
\captionof{figure}{Horizontal Separation, $f$ and $\varphi$ have significant differences, the roof is hit a different number of times}
\label{fig:horiz-sep}
\end{minipage}
\end{figure}

Proposition \ref{techn} is then a collection of tools with which we can prove Proposition \ref{techn1}. Property 1. guarantees that our claims occur frequently throughout the orbit, property 2. guarantees that we stay out of the cusp (this gives a. and b.). Properties 3., 4., 5. and 6. serve to prove c.: property 3. is a way to deal with low values of $N$, which are covered by choosing our parameters sufficiently small. Property 6. guaranties that the orbits of $y$ and $y'$ don't come to close together so that they will split at time $\ll N$. 

The main content then lies in 4. and 5.  Recall that we wish to show that the amount of time spent in the set $A_j^{N,\xi_0}$ (for fixed $j$) is not long. Conclusion 5. deals with the case when horizontal separation occurs. The danger is the recurrence of circle rotations: if there is enough separation, the horizontal position of the orbits could become close after some time. Property 5. shows that for a time comparable to the inverse of the distance (up to a $\log$ factor) either there is {\it no} horizontal divergence, or the divergence is very large (see the constant $100$) compared to the original closeness (see Figure \ref{fig:horiz-sep}). In view of 5. the only way points can stay long in $A_j^{N,\xi_0}$ is when they move isometrically with respect to the horizontal direction. Property 4. then ensures that at a scale comparable to the inverse of the distance (up to a small power), the number of such occurences is small (in fact we have $1-\eta_0$ gain).  
This follows by the study of {\em Birkhoff sums} of $f'$, as we show later that for most of the times the growth of $f'$ is $n^{2-\eta}$, so points in 4. will split in the vertical direction (see Figure \ref{fig:vert-sep}).




\subsection{A proposition which implies Theorem \ref{thm:vwb}}
Let $\eta_0\ll\eta$.
\begin{proposition}\label{techn1} There exist  a set $B\subset \T^2\times M$, $\lambda(B)>9/10$, a {measurable} family of sets $(C_y)_{y\in M}$, $\mu(C_y)\geq 9/10$ and ${\frac{2^{\eta_0}-1}{2}}>\xi_0>0$ (arbitrary small) such that for every $(x,y)\in B$, $(x',y')\in B\cap \T^2\times C_y$ and  every $N\in\N$ there exists a set $U_N=U_N(x,y,x',y')\subset [0,N]$ such that
\begin{enumerate}[a.]
	 \item $|U_N|\geq \frac{9N}{10}$, 
    \item for every $n\in U_N$,  $y_n=K_{\vphi_n(x)}(y)\notin\{(y,s)\in M\;:\;s<\xi_0^{-1}\}$, 
	 \item for every $j\in \N$ we have
$$\left|U_N\cap A_j^{N,\xi_0}(x,y,x',y')\right|\leq \frac{N}{2^{j\eta_0}}.$$ 
\end{enumerate}
\end{proposition}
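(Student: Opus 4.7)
The plan is to deduce Proposition \ref{techn1} from Proposition \ref{techn}, exploiting its six properties in a structured way. The set $B$ and the family $(C_y)_{y \in M}$, whose measure bounds already match those required here, should be inherited directly from Proposition \ref{techn}; the constant $\xi_0$ is likewise taken from that proposition, chosen small enough to govern both the cusp-avoidance threshold and the required gap exponent. The set $U_N(x,y,x',y')$ is then defined as the subset of $[0,N]$ consisting of those $n$ for which $y_n$ stays at vertical height at least $\xi_0^{-1}$ above the cusp, intersected with the regularity window dictated by properties 1 and 2 of Proposition \ref{techn}. Conclusion (b) is immediate from this definition. Conclusion (a) follows by combining the ``frequency'' property 1 of Proposition \ref{techn} with the ``cusp avoidance'' property 2, which together ensure that the complement of $U_N$ in $[0,N]$ has density less than $1/10$.

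The substance of the argument lies in (c). Fix $j \in \N$ and set $L_j := 2^{j/(2-\eta)}$, which is the natural separation time at horizontal scale $2^{-j}$: at this time the vertical shearing $2^{-j}\cdot f'_n$, with $f'_n$ of order $n^{2-\eta}$, becomes of order one. For $N \le L_j$ the estimate follows from property 3 of Proposition \ref{techn}, which is designed precisely to absorb the small-$N$ regime. For $N > L_j$, partition $[0,N]$ into consecutive windows of length $L_j$ and bound the number of times from $U_N \cap A_j^{N,\xi_0}$ inside each window separately.

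Consider a window starting at some $n_0 \in U_N \cap A_j^{N,\xi_0}$. Property 5, which implements the Diophantine rigidity of $\alpha \in \cD$ via Lemma \ref{cons:dis}, forces a dichotomy on the horizontal side: either the horizontal distance $d_H(y_n,y'_n)$ stays essentially at the initial scale $2^{-j}$ throughout the window, or it jumps beyond $100 \cdot 2^{-j}$ at some point inside the window, which immediately ejects the orbits from $A_j^{N,\xi_0}$. In the first case, property 4 applies and delivers vertical shearing at a proportion at least $1 - \eta_0$ of the times in the window, driving the vertical component of $d(y_n,y'_n)$ above $\xi_0$ and again ejecting the points from $A_j^{N,\xi_0}$. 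Property 6 then guarantees that no ``returns'' to the annulus $A_j$ occur quickly once a split has been triggered. Hence each window contributes at most $O(L_j^{1-\eta_0})$ times to $U_N \cap A_j^{N,\xi_0}$, and summing over the at most $N/L_j$ windows yields a bound of order $N \cdot 2^{-j\eta_0}$ provided $\eta_0$ is chosen sufficiently small compared to $1/(2-\eta)$, which is consistent with the standing assumption $\eta_0 \ll \eta$.

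The main obstacle is the interplay between properties 4 and 5 at the same time, together with the fact that the vertical stretching is not pointwise: because the roof $f$ is symmetric, the Birkhoff sums $f'_n$ suffer cancellations and do not behave like $n^{2-\eta}$ at every time. The required polynomial gain $N/2^{j\eta_0}$ over the trivial bound $N$ therefore comes precisely from the density of times at which Proposition \ref{mpr} certifies $|f'_n| \gtrsim n^{2-\eta}$, transferred through property 4 of Proposition \ref{techn}. Carefully tracking this density across windows of length $L_j$, while simultaneously ruling out recurrence through properties 5 and 6, is the delicate point and the step most likely to consume the technical effort.
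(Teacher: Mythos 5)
Your outline is in the right direction---divide $[0,N]$ into windows, use properties 4 and 5 of Proposition~\ref{techn} per window, dispatch the small-$N$ regime with property~3---but the central quantitative step is misconfigured and the estimate does not close.

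The window length $L_j = 2^{j/(2-\eta)}$ is the wrong one for Proposition~\ref{techn}. Property~4 is formulated on the range $[-R_m^{1-10\eta},R_m^{1-10\eta}]$ and yields a single count $<3R_m^{(1-10\eta)(1-\eta_0)}$ for the whole range; it does \emph{not} say that on a subwindow of length $L_j$ the number of bad times shrinks to $O(L_j^{1-\eta_0})$. The bad times guaranteed small by Proposition~\ref{mpr} could in principle concentrate entirely inside your short window, so your per-window bound of $O(L_j^{1-\eta_0})$ is not justified by property~4 as stated. Indeed, since $R_m\approx 2^j$ and $L_j\approx 2^{j/(2-\eta)}\approx 2^{j/2}$, the bound $3R_m^{(1-10\eta)(1-\eta_0)}$ from property~4 is comparable to $2^{j(1-10\eta)(1-\eta_0)}$, which is already \emph{larger} than $L_j$; applied window by window it gives no saving at all, and summing over $N/L_j$ windows produces a bound like $N\cdot 2^{j[(1-10\eta)(1-\eta_0)-1/(2-\eta)]}$, which for small $\eta$ is of order $N\cdot 2^{j/2}\gg N$, worse than the trivial estimate. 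The correct calibration is to take windows of length $2^{j(1-10\eta)}$ (so $\approx R_m^{1-10\eta}$, matching the range in property~4): then each window contributes at most $\approx 2^{j(1-10\eta)(1-\eta_0)}$ times, and the ratio per window is $2^{-j(1-10\eta)\eta_0}$, which is the source of the polynomial gain. Property~5 applies on this longer window too, since its range $R_m/\log^5 R_m$ dominates $R_m^{1-10\eta}$.

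Two smaller issues. First, the role of property~6 is not to rule out ``returns'' after a split; it is to guarantee $2^j < N\log^6 N$, which ensures that $[0,N]$ is cut into more than one window and that the final ``$+1$'' from the ceiling in $N/2^{j(1-10\eta)}+1$ is absorbed into the target bound. Second, your choice of the first time $m$ of $U_N\cap A_j$ in each window (not just any $m$) is what makes properties~4 and~5 applicable, because one needs $m\in U_N$; you should say this explicitly. Also note $\eta_0$ appears with a fixed exponent; making it ``sufficiently small compared to $1/(2-\eta)$'' does not turn $2^{-j\eta_0/(2-\eta)}$ into $2^{-j\eta_0}$ --- these differ multiplicatively in the exponent, not additively --- though since the $\eta_0$ in Proposition~\ref{techn1} is effectively a renameable parameter, this alone would not be fatal if the rest of the estimate went through.
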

 Let us show how Proposition \ref{techn1} implies Theorem \ref{thm:vwb}.
\begin{proof}[Proof of Theorem \ref{thm:vwb}] 
Let $Q$ be the following partition of $M$: The set 
$\{(y,s)\in M\;:\;s\geq \xi_0^{-1}\}$ is one atom of $Q$; divide the (compact) set $\{(y,s)\in M\;:\;s< \xi_0^{-1}\}$
into atoms of diameter $<\xi_0$. Notice that 
by \eqref{anj}, for {$j\leq -1-\log_2\xi_0$},
$$
A_j^{N,\xi_0}(x,y,x',y')=\emptyset.
$$
Therefore, for every $(x,y)\in B$, $(x',y')\in B\cap \T^2\times C_y$ and  every $N\in\N$, we have using a., b. and c. and since $\xi_0<{\frac{2^{\eta_0}-1}{2}}$


\begin{multline*}
D^Q_N(x,y,x',y')\leq \frac{|U_N^c|}{N} + \frac{1}{N}\left|\{n\in U_N\;:\;  d(y_n,y'_n)<\xi_0 \}\right| \leq  \\
1/10+ \frac{1}{N}\sum_{j\geq 1-\log_2\xi_0}^{\infty}|U_N\cap A_j^{N,\xi_0}(x,y,x',y')| 
\leq 1/10+1/2<9/10 
\end{multline*}
and this finishes the proof.
\end{proof}
So it remains to prove Propostion \ref{techn1}.
\subsection{The main technical result; proof of Proposition \ref{techn1}}
The following proposition implies Proposition \ref{techn1}. Recall that $d$ is a metric on $M$ and we denote by $d_H$ and $d_V$ the distances on first and second coordinate respectively. Recall that $\eta_0\ll \eta$, $y_m:=K_{\vphi_m(x)}y$, $y'_m:=K_{\vphi_m(x')}y'$ and $R_m:=d_H(y_m,y'_m)^{-1}$.

\begin{proposition}\label{techn} There exist  a set $B\subset \T^2\times M$, $\lambda(B)>9/10$, a {measurable}  family of sets $(C_y)_{y\in M}$, $\mu(C_y)\geq 9/10$ and there exists $N'$ such that for every $N_3 \geq N'$ there exists $\xi_3>0$ {(small)} such that for every $(x,y)\in B$, $(x',y')\in B\cap \T^2\times C_y$ and  every $N\in\N$ there exists a set $U_N=U_N(x,y,x',y')\subset [0,N]$ such that
\begin{enumerate}
	 \item $|U_N|\geq \frac{9N}{10}$, 
    \item for every $n\in U_N$,  $y_n\notin\{(y,s)\in M\;:\;s<\xi_3^{-1}\}$, 
	 \item for every $j\in \N$ and $N\leq N_3$
$$U_N\cap A_j^{N_3,\xi_3}(x,y,x',y')=\emptyset;$$
    \item if $N\geq N_3$; for every $m\in U_N$ if $d(y_m,y_m')<\xi_3$,
 then 
\begin{multline*}
|\{n\in [-R_m^{1-10\eta},
R_m^{1-10\eta}]\;:
\;d_H(y_{n+m},y'_{n+m})=d_H(y_m,y'_m)\\
\text{ and }\;d_V(y_{n+m},y'_{n+m})<1
\}|<3R_m^{(1-10\eta)(1-\eta_0)};
\end{multline*}
\item If $N \ge N_3$, $m\in U_N$ and $d_H(y_m,y'_m)<\xi_3$ then for every $n\in [0,\frac{R_m}{\log^5R_m}]\cap \Z$ either $d_H(y_{n+m},y'_{n+m})=d_H(y_m,y'_m)$ or 
$$
d_H(y_{n+m},y'_{n+m})\geq 100d_H(y_m,y'_m);
$$
\item for every $N\geq N_3$
$$
\min_{0\leq i<N}d_H(y_i,y'_i)\geq \frac{1}{N\log^6N}\;\;\;\;\; i.e\;\;\max_{0\leq i<N}R_i\leq N\log^6N
$$
 \end{enumerate}
\end{proposition}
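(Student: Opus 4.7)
The plan is to construct the set $B$, the measurable family $\{C_y\}_{y \in M}$, and a suitable $U_N$ in that order, and then verify properties 1--6. For $B$, the key requirements are uniform control on the Birkhoff sums of $\vphi$ (so that $\vphi_n(x) \approx n\vphi_0$ with good statistical error, coming from a CLT or ergodic argument for the hyperbolic base) and a uniform lower bound on how far the $K_t$-orbit of $y$ stays from the cusp for most times in any long window. Both are almost sure events, so by an Egorov-type argument they may be made uniform on a set of measure greater than $9/10$. The family $\{C_y\}$ is constructed as the set of points $y'$ whose orbit does not approach $y$ too rapidly; Fubini together with the invariance of $\mu$ gives $\mu(C_y) \geq 9/10$ for a.e. $y$. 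Defining $U_N$ as the set of indices $n \in [0,N]$ that are good relative to the Egorov bounds (i.e. $y_n$ stays away from the cusp and the cocycle statistics behave as expected), properties 1 and 2 are immediate by construction.

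Properties 3, 5 and 6 should then follow with modest additional work. Property 6 is a quantitative recurrence statement that is built into the definition of $C_y$: excluding $y'$ whose $K_t$-orbit passes within $1/(N\log^6 N)$ of $y$ in the first $N$ steps only removes a small-measure set, which can be absorbed into the $9/10$ bound. Property 3 is essentially a choice of constants: for the finitely many $N \leq N_3$, $\xi_3$ can be chosen so small that the sets $A_j^{N_3,\xi_3}$ are empty on $U_N$. Property 5 is a rotation-theoretic statement that will follow from Lemma \ref{cons:dis} and the Diophantine condition $\alpha \in \cD$: because $\alpha$ is badly approximable, the orbit of the horizontal coordinate under $R_\alpha$ cannot gradually drift across small scales, so over the time-scale $R_m/\log^5 R_m$ (which is essentially the closest-return time for the relevant scale), the horizontal distance either stays fixed (same number of roof crossings for $y_{n+m}$ and $y'_{n+m}$) or jumps by a definite factor.

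The main obstacle is property 4, which is the content of Section \ref{sec.4}. The guiding intuition is vertical stretch: if at time $m$ the horizontal positions agree and at a later time $m+n$ they again agree, then the vertical gap is approximately the Birkhoff-sum difference $|f_n(y) - f_n(y')|$, controlled by the mean value theorem via $|f'_n| \cdot d_H(y_m, y'_m)$. Since typically $|f'_n| \sim n^{2-\eta}$ by Lemma \ref{koksi}, one expects the vertical gap to exceed $1$ as soon as $n \gg R_m^{1/(2-\eta)}$, and certainly on most of the window $[-R_m^{1-10\eta}, R_m^{1-10\eta}]$. The difficulty is that $f$ is symmetric near the singularity, so $f'_n$ suffers cancellations and can be much smaller than $n^{2-\eta}$ on exceptional times. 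Here I would invoke Proposition \ref{mpr}, which quantifies that the bad set of $n$ where $|f'_n|$ is anomalously small is polynomially sparse; this polynomial saving is exactly what produces the exponent gain $\eta_0$ in the bound $3R_m^{(1-10\eta)(1-\eta_0)}$. Combining this with the deterministic Denjoy--Koksma estimates of Lemma \ref{koksi} and a careful bookkeeping over the window will yield property 4. The proof of Proposition \ref{mpr} itself, which I would defer to Section \ref{p:mpr}, is where the genuinely probabilistic input (a variance/CLT-type argument for Birkhoff sums of $f'$) enters and is the real technical heart of the whole argument.
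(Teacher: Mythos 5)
Your proposal follows the paper's own proof in all essentials: an Egorov-type construction of $B$ encoding CLT control on $\vphi_n$, avoidance of the cusp via the sets $S_n$, and the sparsity from Proposition \ref{mpr} of times at which $|f'_n|$ degenerates; a quantitative-recurrence definition of $C_y$ via Fubini; $U_N$ as the times the orbit visits the good set $G$; the Diophantine Lemma \ref{cons:dis} for property 5; and, for property 4, vertical stretch driven by $|f'_n| \cdot d_H$ combined with Proposition \ref{mpr} to bound the exceptional set. This matches the paper's route, so no further comparison is needed.
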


The proof of Proposition \ref{techn} is the most involved part of the paper, it will be given in separate subsection.

\begin{proof}[Proof of Proposition \ref{techn1}]
Notice that a. and b. in Proposition \ref{techn1} follow by 1. and 2. in Proposition \ref{techn}. Moreover if $N\leq N_3$, then c. in Proposition \ref{techn1} follows by 3. in Proposition \ref{techn}. So it remains to prove c. in Proposition \ref{techn1} asuming that 4., 5. and 6. in Proposition \ref{techn} hold. \\
\\

\begin{figure}
\begin{center}
\includegraphics[width=5in]{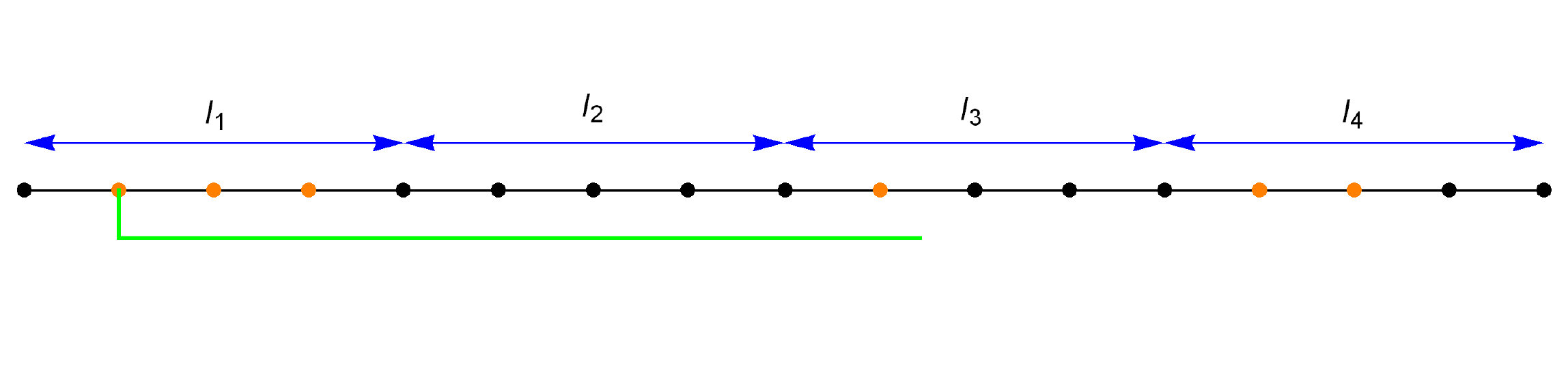}
\caption{Breaking up $[0,N]$}
\label{fig:prop41}
\end{center}
\end{figure}

Fix $j\in \N$ and assume that $U_N\cap A_j^{N,\xi_3}(x,y,x',y')\neq\emptyset$.  Divide the interval $[0,N]$ into intervals $I_1,...,I_k$ of length $2^{j(1-10\eta)}$. By 6. it follows that $2^{j}<N\log^6N$ and therefore $k>1$ assuming $N$ is large enough, i.e. $N_3$ large enough. 
Consider only those $I_i$, for which  $U_N\cap A_j^{N,\xi_3}(x,y,x',y')\cap I_i \neq \emptyset$.

 In Figure \ref{fig:prop41}, we see the scheme which we are about to pursue formally. We have already broken up the long interval
$[0,N]$ into smaller (blue) intervals $I_i$.
By dividing the interval $[0,N]$ into intervals of appropriate size, we may get estimates
on the times in $A_j^{N,\xi_3}$, represented by the orange points. The green interval represents the times after the first occurence of $A_j^{N,\xi_3}$ when we may apply the estimates of 4. and 5. of Proposition \ref{techn}. These guarantee to divergence to show that $A_j^{N,\xi_3}$ takes up very little
of $[0,N]$.

 Let $m$ be the first element of this intersection. Since $m\in U_N$ we obtain that if $n\in I_i$ then $n-m\leq 2^{j(1-10\eta)}<R_m^{(1-10\eta)}\leq \frac{R_m}{\log^5R_m}$ and hence by 5. either $d_H(y_n,y'_n)=
d_H(y_m,y'_m)$ or $d_H(y_n,y'_n)\geq 100d_H(y_m,y'_m)$.  If $n\in U_N\cap A_j^{N,\xi_3}(x,y,x',y')\cap I_i$ the second option is not possible by the definition of $A_j^{N,\xi_3}(x,y,x',y')$.

 So we obtained that:
\begin{multline*}
U_N\cap A_j^{N,\xi_3}(x,y,x',y')\cap I_i\subset \{n\in I_i\;:\;d(y_n,y'_n)<\xi_3\;\text{ and }\; 
d_H(y_n,y'_n)=
d_H(y_m,y'_m)\}.
\end{multline*}

%
Moreover, by 4. we get that
\begin{multline*}
 |\{n\in I_i\;:\;d(y_n,y'_n)<
\xi_3\;
\text{ and }\; 
d_H(y_n,y'_n)=
d_H(y_r,y'_r)\}|\leq 2^{j(1-10\eta)(1-\eta_0)+2}.
\end{multline*}
Therefore, summing over all $i\in \{1,...k\}$  we get 
$$
|U_N\cap A_j^{N,\xi_3}(x,y,x',y')| 
\leq \left[\frac{N}{2^{j(1-10\eta)}}+1\right]2^{j(1-10\eta)(1-\eta_0)}\leq
\frac{N}{2^{j\eta_0/2}}
$$
 

This gives c. in Proposition \ref{techn1}.
\end{proof}

The rest of the paper will be devoted for proving Proposition \ref{techn}. Properties 1., 2. will follow straightforward after we define the sets $B$ and $\{C_y\}_{y\in M}$. Properties 3., 5. and 6. are of intermediate level of difficulty, we will devote separate subsections for them. The heart of the problem is property 4. for which we devote a separate section.





\section{Construction of $B$ and $\{C_y\}_{y\in M}$}\label{bcy}
In this section we construct sets $B$ and $\{C_y\}_{y\in M}$ for Proposition \ref{techn}. Construction of $B$ is more involved
it uses precise estimates on Birkhoff sums for $\vphi, f,f'$ and we will conduct it in several subsections below. The construction of $C_y$ is much more straightforward, we will give it now.
\subsection{Construction of $\{C_y\}_{y\in M}$.} 
For $y\in M$ we have $y'\in C_y^i\subset M$ if 
$$
\inf_{t\in[-q_i\log q_i,q_i\log q_i]}d_H(y,K_ty')\geq \frac{1}{q_i\log^3q_i}.
$$
 
Notice that $\mu(C^i_y)\geq 1-\frac{2}{\log^{2}q_i}$.
Let 
$$
C^0_y:=\bigcap_{i\geq n_0}C^i_y,
$$
where $n_0\gg 1$ is such that $\mu(C^0_y)\geq 99/100$.  Define 
\begin{equation}\label{d:cy}
C_y=C^0_y\cap\{y'\in M\;:\; \min_{|t|<n_0^2}d_H(K_t(y'),y)\geq \frac{1}{n_0^3}\}
\end{equation}
Then $\mu(C_y) \geq 9/10$.
This way we defined a family of sets $\{C_y\}_{y\in M}$. We will later show that this family satisfies the assumptions of Proposition \ref{techn}. We will now construct the set $B$.
\subsection{Construction of $B$}
\paragraph{Scheme of the construction} The construction of $B$ brakes down into several steps. First we construct sets $E_0$ and $F_0$ for which the desired asymptotics for the hyperbolic part holds (see \eqref{yeg1} and \eqref{yegint}). Then we define the set $V$ (see Proposition \ref{mpr}) which gives correct asymptotics for $f'$, i.e. for $(x,y)\in V$ the derivative is large for most of the times (see the definition of $W_n$). This corresponds for spliting in vertical direction. We also have the set $S$ which allows to control the distance to singularity at scales $(q_n)$. This gives the set $G$ (see \eqref{gse}). The set $B$ is the set of points whose orbits visit $G$ with correct propotion.

\subsubsection{Birkhoff sums for $\vphi$}\label{sec:Ans}
Let us denote $\vphi_0=\int_{\T^2}\vphi \, d\lambda$ and $\vphi_n = \sum_{k=0}^{n-1} \varphi\circ A^k$, where $A$ is a hyperbolic toral automorphism. The following is an immediate consequence of the central limit theorem for H\"{o}lder continuous observables:

\begin{lemma}
For $\lambda$-almost every $x \in \T^2$,
\begin{equation}\label{lil}
\lim_{|n|\to+\infty}\frac{|\vphi_n(x)-n\vphi_0|}{|n|^{1/2}\log |n|}=0.
\end{equation}
\end{lemma}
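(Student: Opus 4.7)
The plan is to deduce \eqref{lil} from the law of the iterated logarithm (LIL) for Hölder observables over hyperbolic toral automorphisms. First I would pass to a symbolic model via a Markov partition: this gives a Hölder almost-conjugacy between $(\T^2,\lambda,A)$ and a subshift of finite type $(\Sigma_A,\mu_A,\sigma)$ carrying a Bernoulli/Markov measure, so that the Hölder function $\varphi$ pulls back to a Hölder function on $\Sigma_A$ with the same integral and the same Birkhoff sums along the coding.

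On the symbolic side, the Ruelle–Perron–Frobenius transfer operator acting on Hölder classes has a spectral gap. Classical consequences (via the martingale approximation of Gordin, or the almost-sure invariance principle of Denker–Philipp) then give, for $\lambda$-a.e.\ $x$,
$$\limsup_{n\to+\infty} \frac{|\vphi_n(x)-n\vphi_0|}{\sqrt{2\sigma^2\, n\log\log n}} \;=\; 1,$$
whenever the CLT variance $\sigma^2$ of $\varphi-\vphi_0$ is strictly positive. Since $\sqrt{n\log\log n}=o(n^{1/2}\log n)$, this immediately yields the one-sided version of \eqref{lil}.

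The degenerate case $\sigma^2=0$ is handled by Livshitz's theorem: vanishing of the variance forces $\varphi-\vphi_0$ to be a Hölder coboundary $\psi\circ A - \psi$, whereupon $\vphi_n - n\vphi_0$ is uniformly bounded and \eqref{lil} is trivial. The two-sided statement $|n|\to\infty$ then follows by applying the same LIL to the inverse automorphism $A^{-1}$ (which is again hyperbolic) acting on $\varphi\circ A^{-1}$.

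The argument is essentially routine; the only point requiring care is the coboundary case, which is disposed of by Livshitz regularity as above, so no serious obstacle remains.
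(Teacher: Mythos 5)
Your argument is correct and is essentially the standard route; the paper itself gives no proof here, remarking only that the lemma is ``an immediate consequence of the central limit theorem.'' In fact, strictly speaking the distributional CLT alone does not yield an almost-sure statement, so your appeal to the law of the iterated logarithm (equivalently, the almost-sure invariance principle of Denker--Philipp, or Gordin's martingale approximation together with the martingale LIL) is the precise tool that makes the argument rigorous rather than merely plausible: the bound $\vphi_n-n\vphi_0 = O\bigl(\sqrt{n\log\log n}\bigr)$ a.s.\ is strictly stronger than the claimed $o\bigl(n^{1/2}\log n\bigr)$. Your treatment of the degenerate case $\sigma^2=0$ via Livshitz regularity is also correct (and in fact superfluous for the paper's application, since the hypothesis~\eqref{eq:K} already rules out the coboundary case), and passing to $A^{-1}$ for negative $n$ is the right way to get the two-sided limit. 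An alternative, slightly more elementary route that avoids the LIL is to use a Berry--Esseen rate for the CLT (also available from the transfer-operator spectral gap) to get $\mathbb{P}\bigl(|\vphi_n-n\vphi_0|>\eps\, n^{1/2}\log n\bigr)\lesssim e^{-c\log^2 n}$ and then apply Borel--Cantelli, but this buys nothing over the cleaner LIL argument you gave.
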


Therefore, by Egorov's theorem, there exists $N_0\in \N$ and $E_0\subset \T^2$, $\lambda(E_0)>99/100$ such that for every $x\in E_0$ and every $N\in \Z$, $|N|\geq N_0$

\begin{equation}\label{yeg1}
|\vphi_n(x)-n\vphi_0|\leq |n|^{1/2}\log|n|.
\end{equation}
For simplicity we assume that $\vphi_0=1$. Then, by Egorov's theorem,
there exists a set $F_0$ and $N'$ such that for every $x\in F_0$ and $|n|\geq N'$
 \begin{equation}\label{yegint}
\vphi_n(x)\geq \frac{|n|}{2}.
\end{equation}

\subsubsection{Birkhoff sums for $f'$}\label{sec:Koch}
Recall that for $y\in M$, $y_0\in \T$ denotes the first coordinate of $y$. Let
$$
S_n:=\left\{y\in M\;:\;\bigcup_{t=-q_n\log q_n}^{q_n\log q_n}K_t(y)\notin [-\frac{1}{q_n\log^3q_n},\frac{1}{q_n\log^3q_n}]^f\right\}.
$$
Notice that $\mu(S_n)\geq 1- \frac{2}{\log^2q_n}$.
Let moreover, 
\begin{equation}\label{gse}
S:=\left(\bigcap_{n\geq n_1}S_n\right)\cap\{y\in M\;:\; y_0\notin[-n_1^{-2},n_1^{-2}]\},
\end{equation}
where $n_1\in \N$ is such that $\mu(S)>99/100$.

Let for $n\in \Z$,  $W_n:=\{y\in M \;: \;\abs{f'_n(y_0)}\geq |n|^{2-4\eta}\}$. 

We have the following proposition:

\begin{proposition}\label{mpr} Let $\delta=\frac{\eta^3}{1000}$. There exists $V\subset \T^2\times M$, $\lambda\times \mu(V)\geq 29/30$ and $N_1\in \N$ such that for every $(x,y)\in V$, $N\geq N_1$ we have
\begin{equation}\label{eq:m}
\left|\{i\in[-N,N]\;: y\notin W_{N(y,\vphi_i(x))}\;\}\right| \leq (2N)^{1-\delta}.
\end{equation}
\end{proposition}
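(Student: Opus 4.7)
The plan is to reduce Proposition \ref{mpr} to a purely rotational statement about the Birkhoff sums $f'_n(y_0)$, and then extract the polynomial gain $\delta=\eta^3/1000$ via probabilistic tools exploiting the Diophantine condition on $\alpha$ together with the sign-cancellation structure of $f'$ near the singularity.

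First, using the central limit estimate \eqref{yeg1} together with the positivity \eqref{yegint}, I would extract a set $E\subset\T^2$ of near-full $\lambda$-measure on which $\vphi_i(x)$ grows like $i$ up to fluctuations of order $|i|^{1/2}\log|i|$. In parallel, an Egorov-type argument (analogous to the construction of $S$) furnishes a set of $y\in M$ of large $\mu$-measure on which $f_n(y_0)$ itself grows linearly, since the rotation orbit does not come abnormally close to the cusp. Together these imply that $N(y,\vphi_i(x))$ lies in a narrow window around $i$, and in particular the event $y\notin W_{N(y,\vphi_i(x))}$ forces $|f'_n(y_0)|<|n|^{2-4\eta}$ for some integer $n\asymp i$. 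The problem is thereby reduced to showing that, for $\mu$-a.e.\ $y$, the bad set
$$\cal{B}_N(y):=\left\{n\in[-N,N]:\,|f'_n(y_0)|<|n|^{2-4\eta}\right\}$$
has cardinality at most $N^{1-2\delta}$ for all $N\geq N_1$.

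To control $|\cal{B}_N(y)|$, I would partition $[1,N]$ into the natural blocks $[q_s,q_{s+1})$ coming from the continued fraction expansion of $\alpha$. Lemma \ref{koksi} gives
$$|f'_n(y_0)|\geq |y^n_{\min}|^{-(2-\eta)}-8q_{s+1}^{2-\eta}\qquad(n\in[q_s,q_{s+1})),$$
so membership of $n$ in $\cal{B}_N(y)$ forces a near-balance between close returns of the orbit on the two sides of the singularity, where $f'$ has opposite sign. The Diophantine condition $\alpha\in\cal{D}$ controls the growth $q_{s+1}\lesssim q_s\log^{11/10} q_s$, keeping $q_s^{2-\eta}$ and $q_{s+1}^{2-\eta}$ essentially comparable and making the block-by-block analysis tractable.

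The principal obstacle is precisely this near-cancellation regime, where Denjoy-Koksma is too weak on its own: the error $q_{s+1}^{2-\eta}$ can be of the same order as the leading term $|y^n_{\min}|^{-(2-\eta)}$. This is where the probabilistic tools of Section \ref{p:mpr} enter. The idea is to view $f'_n(y_0)$ as a sum of heavy-tailed but nearly antisymmetric increments indexed by the returns of the rotation orbit to a shrinking neighbourhood of $0$, and to use a moment (or CLT-type) estimate combined with a maximal inequality inside each block to obtain a polynomial bound of the form $\mu\{y:n\in\cal{B}_N(y)\}\leq n^{-c\eta}$ for some constant $c>0$. A union bound (or Borel-Cantelli) across dyadic scales of $n$, followed by Fubini with the good set of $x$ from the first step, then delivers the set $V$ with $\lambda\times\mu(V)\geq 29/30$ and the target count $(2N)^{1-\delta}$, once $N_1$ is taken large enough to absorb all constants.
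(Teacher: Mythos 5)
Your high-level architecture matches the paper: reduce to a rotation-only statement about $f'_n$, establish a measure bound on the bad set $W_n^c$, run a maximal/Borel--Cantelli argument to make it pointwise almost everywhere, and then transfer to the skew-product times $N(y,\vphi_i(x))$ using the good sets for $\vphi$. However, there are two genuine gaps.

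First, the central measure estimate is only gestured at, and by a mechanism that is not what actually works. You propose to treat $f'_n(y_0)$ as a sum of heavy-tailed, nearly antisymmetric increments and invoke a moment/CLT-type bound to get $\mu\{y : n\in\mathcal B_N(y)\}\lesssim n^{-c\eta}$. This is vague and does not isolate the actual mechanism: the paper's Lemma~\ref{smd} never tries to control cancellations in $f'_n$ directly. Instead it observes that on each interval $I$ of the orbit partition $\{-i\alpha\}_{i=0}^{n-1}$ the function $f'_n$ goes from $-\infty$ to $+\infty$ and hence has a zero $x_I\in I$; since $f''$ is \emph{positive and of one sign} near the singularity (all limits in \eqref{asu3} equal $R_1>0$), the Denjoy--Koksma bound \eqref{koks2} gives $f''_n \gtrsim q_s^{3-\eta}\gtrsim n^{3-2\eta}$ uniformly, so by the mean value theorem $|f'_n(x)|\gtrsim n^{3-2\eta}|x - x_I|$ and the bad set on $I$ is a single interval of length $O(n^{-1-2\eta})$. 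Summing over the $n$ intervals yields $\mu(W_n^c)\lesssim n^{-2\eta}$. Your proposed ``near-balance of close returns'' picture misdiagnoses the obstacle: Denjoy--Koksma \eqref{koks1} on $f'_n$ is indeed lossy because $f'$ changes sign across the cusp, but the fix is to move to $f''_n$, which has no cancellation, not to run a probabilistic argument on the increments of $f'_n$.

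Second, the transfer from the rotation bad set to $\{i : y\notin W_{N(y,\vphi_i(x))}\}$ is not a routine Fubini step; this is exactly the content of the paper's Lemma~\ref{con} and is where the exponent degrades from $\eta/10$ to $\delta=\eta^3/1000$. The set $\{t : y\notin W_{N(y,t)}\}$ is a union of intervals of length at least $\inf f$, and one must count integers $i$ with $\vphi_i(x)$ landing in these intervals using monotonicity of $\vphi_i$ (after replacing $\vphi$ by a cohomologous strictly positive cocycle). Your claim that $N(y,\vphi_i(x))\asymp i$ so that a bad $i$ ``forces $|f'_n(y_0)|<n^{2-4\eta}$ for some $n\asymp i$'' is true but insufficient, because the CLT fluctuation of size $i^{1/2}\log i$ in $\vphi_i(x)$ means a single bad rotation time $n$ can be hit by many $i$'s, and one needs a genuine interval-counting argument to control this multiplicity. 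As written, the proposal therefore does not close the argument.
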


We will give the proof of Proposition \ref{mpr} in a separate subsection. We have now defined all sets, which are needed in the definition of $B$ in Proposition \ref{techn}. 

Consider the set 
\begin{equation}\label{defg}
G:=V\cap (F_0\cap E_0)\times(S\cap\{(y,s)\in M\;:\; f(y)<10^4\}),
\end{equation}
where $E_0$ commes from \eqref{yeg1}, $S$ from \eqref{gse} and $V$ from Proposition \ref{mpr}. Then it follows that $\lambda\times \mu(G)\geq 19/20$. Therefore:\\
\begin{multline}
\text{ there exists a set }B_{erg}\subset \T^2\times M, \lambda\times \mu(B_{erg})\geq 19/20\\
\text{ and there exists }N_2\in \N\text{ such that for every }(x,y)\in B_{erg}\text{, and every }N\geq N_2\text{, we have } 
\end{multline}
\begin{equation}\label{yeg2}
 \frac{1}{N}\left|\{i\in[0,N-1]\;:\;A_\vphi^i(x,y)\in G\}\right|\geq 9/10.
\end{equation}

Define \begin{equation}\label{un}U_N(x,y,x',y'):=\{i\in[0,N-1]
\;:\;A_\vphi^i(x,y)\in G\},
\end{equation}
for $N\geq N_2$ and $U_N=[0,N]$ if $N\leq N_2$. Finally define $B$ by 
\begin{equation}\label{d:b}
B:=B_{erg}\cap\left(\T^2\times \left(S\cap\left\{y\in M\;:\;\bigcup_{t=-N_2^2}^{N_2^2}K_t(y)\notin [-N_2^{-4},N_2^{-4}]^f\right\}\right)\right). 
\end{equation}

We will show that the set $B$, sets $\{C_y\}_{y\in M}$ (see \eqref{d:cy}) and $U_N$ satisfy the assumptions of Proposition \ref{techn}. \\

\paragraph{Properties 1. and 2. in Proposition \ref{techn}.}\label{12} Notice that 1. and 2. follow by the definition of $U_N$ (see \eqref{yeg2}) and the set $B$ (if $\xi_0$ is chosen sufficiently small).\\
\\
So to complete the proof of Proposition \ref{techn}, we need to show 3., 4., 5., 6.

\subsection{Proof of 5. in Proposition \ref{techn}}\label{p5}
Property 5. is a straightforward consequence of the following general lemma. Recall that $y_n:=K_{\vphi_n(x)}y$, $y'_n:=K_{\vphi_n(x')}y'$ and $R_n:=d_H(y_n,y'_n)^{-1}$, $R_0:=d_H(y,y')^{-1}$.

%
%
%
%

\begin{lemma}\label{cons:dis} 
There exists $N_0\in \N$ such that for every $x,x'\in \T^2$ and $y,y'\in M$, if $R_0\geq N_0$ and if $n\in [0,R_0\log^{-5}R_0]\cap \Z$ either 
$d_H(y_n,y'_n)=d_H(y,y')$ or 
$$
d_H(y_n,y'_n)\geq 100d_H(y,y')
$$
\end{lemma}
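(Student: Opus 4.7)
The plan is to use the special flow structure to reduce $d_H(y_n, y'_n)$ to a circle-norm expression of the form $\|(u - u') + M\alpha\|$ for an appropriate integer $M = M(n)$, and then combine the Diophantine hypothesis $\alpha \in \cD$ with a linear-in-$n$ upper bound on $|M|$ to rule out all intermediate levels of separation.

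Writing $y = (u, r)$ and $y' = (u', r')$, the explicit formula for the special flow gives $K_t(u, r) = (u + N(y,t)\alpha,\, t + r - f_{N(y,t)}(u))$, so the first coordinate of $K_{\varphi_n(x)}(y)$ is $u + N(y, \varphi_n(x))\alpha$. Consequently
\begin{equation*}
d_H(y_n, y'_n) \;=\; \bigl\|(u - u') + M\alpha\bigr\|, \qquad M := N(y, \varphi_n(x)) - N(y', \varphi_n(x')) \in \Z.
\end{equation*}
Since $f$ is continuous on $\T\setminus\{0\}$ and blows up at $0$, the infimum $f_{\min} := \inf f$ is strictly positive. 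Each passage through the roof therefore consumes time at least $f_{\min}$, giving the uniform bound $|N(z,t)| \leq 1 + |t|/f_{\min}$. Combined with $|\varphi_n(x)| \leq n\|\varphi\|_\infty$, this yields $|M| \leq Cn$ for a constant $C$ depending only on $\varphi$ and $f$.

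Next I would perform a trichotomy on $M$. If $M = 0$, then $d_H(y_n, y'_n) = \|u - u'\| = d_H(y, y')$, matching the first alternative. If $M \neq 0$ and $\|M\alpha\| \geq 101\, d_H(y,y')$, then the reverse triangle inequality on $\T$ gives $d_H(y_n, y'_n) \geq \|M\alpha\| - \|u - u'\| \geq 100\, d_H(y, y')$, matching the second alternative. The remaining case is $M \neq 0$ together with $\|M\alpha\| < 101\, d_H(y,y') = 101/R_0$; here I would invoke $\alpha \in \cD$, which (for $|M| \geq 2$) yields $\|M\alpha\| \geq C_\alpha/(|M|\,\log^{11/10}|M|)$ and hence forces $|M|\,\log^{11/10}|M| \gtrsim R_0$. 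The subcase $|M| = 1$ is ruled out directly, since $\|\alpha\| > 0$ is a fixed positive constant, incompatible with $\|\alpha\| < 101/R_0$ once $R_0$ is large.

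Finally, the hypothesis $n \leq R_0/\log^5 R_0$ gives $|M| \leq C R_0/\log^5 R_0$, so $\log|M| \sim \log R_0$ and
\begin{equation*}
|M|\,\log^{11/10}|M| \;\lesssim\; \frac{R_0}{\log^5 R_0}\,\log^{11/10} R_0 \;=\; \frac{R_0}{\log^{39/10} R_0},
\end{equation*}
which contradicts $|M|\,\log^{11/10}|M| \gtrsim R_0$ once $R_0 \geq N_0$ is chosen large enough. The argument is short; the only delicate point is calibrating the logarithmic exponents, and the factor $\log^{-5}$ in the assumed range of $n$ is comfortably larger than the $\log^{11/10}$ loss from the Diophantine bound, leaving plenty of slack.
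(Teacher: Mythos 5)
Your proof is correct and follows essentially the same route as the paper: reduce $d_H(y_n,y_n')$ to $\|(u-u')+M\alpha\|$ with $|M|\lesssim n$ via the special-flow formula and $\inf f>0$, split on $M=0$ versus $M\neq 0$, and in the latter case let the Diophantine hypothesis on $\alpha$ force $\|M\alpha\|$ to overwhelm $d_H(y,y')$ because the available range of $|M|\lesssim R_0\log^{-5}R_0$ is too small to approximate $\alpha$ at scale $1/R_0$. The paper dispenses with your intermediate trichotomy by plugging the bound on $|M|$ directly into the Diophantine estimate and verifying numerically that $\|M\alpha\|-1/R_0\geq 100/R_0$, but the underlying calculation is the same.
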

\begin{proof} By diophantine assumptions on $\a$ we have
for every $m\geq 1$
\begin{equation}\label{dia}
\inf_{1\leq |s|\leq m}\|s\a\|\geq \frac{C(\a)}{m\log^2m}.
\end{equation}
Take $N_0$ such that $\log N_0\gg C(\a)^{-1}$.
For $n\in[0,R_0\log^5R_0]\cap \Z$  the first coordinates of $y_n$ and $y'_n$ are respectively $y_0+m_n\a$, $y_0'+r_n\a$ for some $m_n,r_n\in \N$. Since $|\vphi|<C$ and $f>c$, it follows that $m_n,r_n<C_0n$ for some constant $C_0$. Therefore we have either $m_n=r_n$ in which case $d_H(y_n,y'_n)=|y_0-y_0'|=d_H(y,y')$ or by \eqref{dia} and $|m_n-r_n|<2C_0n<2C_0R_0\log^{-5}R_0\leq R_0\log^{-4}R_0$ 
we get  
\begin{eqnarray*}
d_H(y_n,y'_n)&\geq&\|(m_n-r_n)\a\|-d_H(y,y')\geq\frac{C(\a)}{R_0\log^{-4}R_0\log^2(R_0\log^{-4}R_0)}-\frac{1}{R_0}\\
&=&\frac{1}{R_0}\left(\frac{C(\a)\log^4R_0}{\log^2(R_0\log^{-4}R_0)}-1\right)> \frac{100}{R_0}=100d_H(y,y').
\end{eqnarray*}
This finishes the proof.
\end{proof}

\subsection{Proof of 3. and 6. in Proposition \ref{techn}}\label{p36}
We have the following easy lemma:

\begin{lemma}\label{shor:bl} There exists $N_4\in \N$ such that for every $y\in M$, every $y'\in C_y$ (see \eqref{d:cy}) and every $N\in \Z$, $|N|\geq N_4$ we have
\begin{equation}\label{sb}
d_H(y, K_Ny')\geq \frac{1}{N\log^5N}. 
\end{equation}
\end{lemma}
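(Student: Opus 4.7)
The plan is to reduce the desired lower bound on $d_H(y,K_N y')$ to the defining property of the approximating sets $C^i_y$ used to build $C_y$, evaluated at the scale $i = i(N)$ that is matched to $N$, and then to compare the resulting scale with the target $1/(N\log^5 N)$ using the Diophantine assumption $\alpha\in\cD$.

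First, for $|N|\geq N_4$ with $N_4$ to be chosen sufficiently large, I would let $i=i(N)$ be the minimal index with $q_i\log q_i\geq |N|$. For $N_4$ large enough this automatically forces $i\geq n_0+1$, so we have simultaneously $|N|\leq q_i\log q_i$ and $q_{i-1}\log q_{i-1}<|N|$. Since $y'\in C_y\subset C^0_y\subset C^i_y$ and $N$ lies in the window $[-q_i\log q_i, q_i\log q_i]$, the definition of $C^i_y$ yields directly
$$d_H(y,K_N y')\geq \frac{1}{q_i\log^3 q_i}.$$

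The next step is to compare $q_i\log^3 q_i$ with $|N|\log^5|N|$. Here I would use the Diophantine hypothesis $\alpha\in\cD$ in the form $|\alpha-p_{i-1}/q_{i-1}|\geq C/(q_{i-1}^2\log^{11/10}q_{i-1})$; combined with the standard continued-fraction relation $\|q_{i-1}\alpha\|\asymp 1/q_i$, this gives $q_i\leq C'q_{i-1}\log^{11/10}q_{i-1}$ for an absolute constant $C'$, and in particular $\log q_i\leq 2\log q_{i-1}$ for $q_{i-1}$ large. Therefore
$$q_i\log^3 q_i \leq 8C' q_{i-1}\log^{11/10+3}q_{i-1} = 8C'(q_{i-1}\log q_{i-1})\log^{31/10} q_{i-1} \leq 8C'|N|\log^{31/10}|N|,$$
where in the last step I used $q_{i-1}\log q_{i-1}<|N|$ and $\log q_{i-1}\leq \log|N|$. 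For $|N|$ large this is bounded by $|N|\log^5|N|$, which combined with the previous step delivers $d_H(y,K_N y')\geq 1/(|N|\log^5|N|)$ as required. Negative $N$ are handled identically because the window in the definition of $C^i_y$ is symmetric.

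I do not expect any serious obstacle: the argument is essentially definitional once the correct scale $i(N)$ is selected. The only subtle point is the use of $\alpha\in\cD$ to keep the growth rate $q_i/q_{i-1}$ logarithmic, which is exactly what guarantees that the scale $1/(q_i\log^3 q_i)$ supplied by $C^i_y$ is compatible with the conclusion's scale $1/(|N|\log^5|N|)$; a weaker Diophantine assumption on $\alpha$ would not suffice with these particular logarithmic exponents.
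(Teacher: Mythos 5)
Your proof is correct and takes essentially the same route as the paper: pick the scale $i=i(N)$ matched to $N$, invoke the defining property of $C^i_y$, and compare the resulting bound $1/(q_i\log^3 q_i)$ with the target $1/(|N|\log^5|N|)$ using the Diophantine consequence $q_{i}\lesssim q_{i-1}\log^{11/10}q_{i-1}$. The paper's own proof is a condensed two-line version of exactly this argument; your write-up just makes the choice of scale and the exponent bookkeeping explicit.
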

\begin{proof} 
Notice that by \eqref{d:cy}, for sufficiently large $n$, we have
$$
\min_{-q_n\log q_n\leq s\leq q_n\log q_n} d_H(y, K_{s}y')\geq \frac{1}{q_n\log^{3}q_n}.
$$
Since $q_{n+1}<q_n\log^{11/10}q_n$, \eqref{sb} follows.
\end{proof}
\begin{corollary}\label{cor:blo} For every $(x,y)\in B $, $(x',y')\in B\cap (\T^2\times C_y)$ and $n\geq 0$ we have
\begin{equation}\label{sb2}
R_n^{-1}=d_H(y_n,y_n')\geq \min\left(\frac{1}{N_4^3},\frac{1}{n\log^6n}\right).
\end{equation}
\end{corollary}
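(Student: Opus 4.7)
The plan is to reduce $d_H(y_n, y'_n)$ to a flowed horizontal distance $d_H(y, K_s(y'))$, bound $|s|$ polynomially in $n$ using $y' \in S$, and then invoke the defining property of $C_y$. Since the first coordinate of $K_t z$ equals $z_0 + N(z,t)\alpha$, setting $k_n := N(y, \varphi_n(x))$ and $k'_n := N(y', \varphi_n(x'))$ gives
\[
d_H(y_n, y'_n) = \|(y_0 - y'_0) + (k_n - k'_n)\alpha\| = d_H(y, K_s(y'))
\]
for any $s$ with $N(y', s) = k'_n - k_n$; taking the minimum such $s$, we have $|s| = f_{|k_n - k'_n|}(y'_0)$. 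Since $\varphi$ is bounded, $f \geq c > 0$, and $(x,y),(x',y') \in B_{\mathrm{erg}}$ gives $\varphi_n(x), \varphi_n(x') = O(n)$, we have $|k_n - k'_n| \leq C_0 n$ for some constant $C_0$.

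The heart of the matter is bounding $|s|$. Pick $s_*$ with $q_{s_*} \leq |k_n - k'_n| \leq q_{s_*+1}$; the Diophantine condition $\alpha \in \cD$ then gives $q_{s_*+1} \leq q_{s_*} \log^{11/10} q_{s_*} = O(n \log^{11/10} n)$. Using $y' \in S \subset S_{s_*+1}$, a bootstrap argument (arguing by contradiction on the first $j_0$ at which the rotation orbit hits the cusp neighborhood and estimating $f_{j_0}(y'_0)$ via Lemma \ref{koksi}, using $(q_{s_*+1} \log^3 q_{s_*+1})^{1-\eta} = o(q_{s_*+1} \log q_{s_*+1})$ since $\eta > 0$) shows that $\{y'_0 + j\alpha\}_{|j| \leq q_{s_*+1}}$ stays at distance $\geq 1/(q_{s_*+1} \log^3 q_{s_*+1})$ from the singularity. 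Then Lemma \ref{koksi} gives
\[
|s| = f_{|k_n - k'_n|}(y'_0) \leq (q_{s_*+1} \log^3 q_{s_*+1})^{1-\eta} + 3 q_{s_*+1} \leq 4 q_{s_*+1} = O(n \log^{11/10} n).
\]

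To conclude, for $|s|$ beyond a fixed constant threshold, pick $m \geq n_0$ minimal with $q_m \log q_m \geq |s|$; a second use of the Diophantine inequality (applied to $q_{m-1}$) gives $q_m \leq |s| \log^{1/10} |s|$, and then $y' \in C_y^m$ yields
\[
d_H(y_n, y'_n) \;\geq\; \frac{1}{q_m \log^3 q_m} \;\geq\; \frac{1}{|s| \log^{31/10} |s|} \;\geq\; \frac{1}{n \log^6 n}
\]
for $n$ sufficiently large. For small $|s|$ (which includes the case $k_n = k'_n$ with $s = 0$, where $d_H(y_n, y'_n) = d_H(y, y')$), the second clause in the definition of $C_y$, namely $\min_{|t| < n_0^2} d_H(K_t y', y) \geq 1/n_0^3$, yields the constant lower bound $1/N_4^3$ upon choosing $N_4 \geq n_0$. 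The main obstacle is the bootstrap converting the flow-time defining condition of $S_{s_*+1}$ into a rotation-count statement compatible with Denjoy-Koksma; the log bookkeeping is tight, but the $11/10$ slack in $\cD$ (used twice) together with the $1-\eta$ exponent from the singularity asymptotics of $f$ combines to give just enough margin for the promised $\log^6$ in the statement.
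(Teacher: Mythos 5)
Your proof follows the paper's intended route, but you flesh out a step that the paper's own (three-line) proof glosses over. The paper writes
$d_H(y_n,y'_n)=\|y_0-y'_0+(m_n-r_n)\alpha\|\geq\inf_{|s|\leq C_0n}d_H(y,K_sy')\geq \frac{1}{n\log^6n}$,
citing \eqref{d:b} and $y'\in C_y$. The middle inequality requires knowing that the rotation count $r_n-m_n$ is reached by $K_s y'$ within flow time $O(n)$ (or, with your weaker but sufficient bound, $O(n\log^{11/10}n)$); since the roof $f$ is unbounded this is not automatic, and it is precisely your bootstrap — using $y'\in S$ and Denjoy–Koksma to show that the rotation orbit $\{y'_0+j\alpha\}_{|j|\lesssim n}$ stays at distance $\gtrsim 1/(q_{s_*+1}\log^3q_{s_*+1})$ from the cusp, hence $f_{|k_n-k'_n|}(y'_0)\leq 4q_{s_*+1}$ — that justifies it. So your argument is the right one, and is in fact the content that the paper leaves implicit; the only difference is that you go directly to the defining inequality of $C_y^m$ rather than routing through Lemma~\ref{shor:bl}, which is cosmetic.

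Two small arithmetic slips in the last paragraph: from $q_{m-1}\log q_{m-1}<|s|$ one gets $q_m\leq q_{m-1}\log^{11/10}q_{m-1}<|s|\log^{11/10}|s|$ (not $\log^{1/10}$), and consequently $q_m\log^3q_m\lesssim |s|\log^{41/10}|s|$ (not $\log^{31/10}$). Since $|s|=O(n\log^{11/10}n)$ this still yields $q_m\log^3q_m=O(n\log^{52/10}n)$, which comfortably beats $n\log^6n$, so the conclusion is unaffected. Your handling of small $n$ via the $\min_{|t|<n_0^2}$ clause of $C_y$ is also the right mechanism; the paper simply restricts to $n\geq N_4$ and folds the small-$n$ cases into the $1/N_4^3$ term.
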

\begin{proof} Let $y+m_n\alpha$ and $y'+r_n\alpha$ denote first cooridnates of respectively $y_n, y'_n$. Since $|\vphi|<C, f>c$ it follows that $m_n, r_n\leq C_0 n$ for some $C_0>0$. Therefore for $n\geq N_4$, by \eqref{d:b} and $y'\in C_y$, we have 
$$
d_H(y_n,y_n')= \|y+y'+(m_n-r_n)\alpha\|\geq \inf_{|s|\leq C_0 n}d_H(y,K_sy)\stackrel{sb}{\geq} \frac{1}{n\log^6n}.
$$  
\end{proof}
Therefore 3. and 6. in Proposition \ref{techn} follow by \eqref{sb2} if only we take $N_3\gg N_4$ and $\xi_3\ll N_4^{-1}$. 

\section{Proof of 4. in Proposition \ref{techn}}\label{sec.4}
In this section we assume that Proposition \ref{mpr} holds (we prove Proposition \ref{mpr} in Section \ref{p:mpr}). Recall that $y_n:=K_{\vphi_n(x)}y$, $y'_n:=K_{\vphi_n(x')}y'$ and $R_n:=d_H(y_n,y'_n)^{-1}$, $R_0:=d_H(y,y')^{-1}$. For $y\in M$ let $y_0$ denote the first coordinate of $y$.

By the definition of $U_N$ (see \eqref{un}) and $B$ (see \eqref{d:b}), property 4. of Proposition \ref{techn} is a strightforward consequence of the following lemma (by applying the lemma to $y=y_m$ and $y'=y'_m$): 

\begin{lemma}\label{cruc} For  every $(x,y), (x',y')\in G$ (see \eqref{defg}) such that $R_0>N_3$ we have 
\begin{multline*}
|\{n\in [-R_0^{1-10\eta},
R_0^{1-10\eta}]\;:
\;d_H(y_n,y'_n)=d_H(y,y')\\
\;\text{ and }\;
d_V(y_n,y'_n)<1
\}|<3R_0^{(1-10\eta)(1-\eta_0)}.
\end{multline*}
\end{lemma}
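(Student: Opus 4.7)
The plan is to start from the special-flow formula for the vertical coordinate and exploit the constraint $d_H(y_n,y_n')=d_H(y,y')$. Setting $y=(y_0,s_0)$, $y'=(y_0',s_0')$ and $N_n:=N(y,\vphi_n(x))$, $N_n':=N(y',\vphi_n(x'))$, Lemma \ref{cons:dis} forces the dichotomy $N_n=N_n'$ or $d_H(y_n,y_n')\geq 100 d_H(y,y')$ on the window $|n|\leq R_0^{1-10\eta}$ (which sits comfortably inside $[-R_0\log^{-5}R_0,R_0\log^{-5}R_0]$ as soon as $N_3$ is large enough). Only the first alternative can contribute, so from the special flow one has
$$d_V(y_n,y_n')=|(s_0-s_0')+\Delta\vphi_n-(f_{N_n}(y_0)-f_{N_n}(y_0'))|,\qquad \Delta\vphi_n:=\vphi_n(x)-\vphi_n(x').$$
By the mean value theorem $f_{N_n}(y_0)-f_{N_n}(y_0')=f'_{N_n}(\xi_n)(y_0-y_0')$ for some $\xi_n$ in the tiny interval $[y_0',y_0]$ of length $R_0^{-1}$. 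Using the Denjoy--Koksma bound \eqref{koks2} for $f''_{N_n}$ together with the $S$-condition built into the definition of $G$ (which keeps the orbit $y_0+j\a$, $|j|\leq R_0^{1-10\eta}$, uniformly away from the cusp at a distance $\gg R_0^{-1}$), I would verify that $f'_{N_n}(\xi_n)=f'_{N_n}(y_0)(1+o(1))$ in the regime of interest, so that $|f'_{N_n}(\xi_n)|$ can be replaced throughout by $|f'_{N_n}(y_0)|$.

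The main input is Proposition \ref{mpr} applied to $(x,y)\in V\supset G$: outside an exceptional set of size $(2R_0^{1-10\eta})^{1-\delta}$, every $n\in[-R_0^{1-10\eta},R_0^{1-10\eta}]$ satisfies $|f'_{N_n}(y_0)|\geq |N_n|^{2-4\eta}$. For such ``good'' $n$, the assumption $d_V(y_n,y_n')<1$ combined with the linearisation step above yields
$$|f'_{N_n}(y_0)|\cdot R_0^{-1}\leq |\Delta\vphi_n|+O(1).$$
The CLT-type bound \eqref{yeg1} for $x,x'\in E_0$ gives $|\Delta\vphi_n|\leq 2\sqrt{|n|}\log|n|$, and \eqref{yegint} gives $|N_n|\geq |n|/2$. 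Feeding these in produces
$$|n|^{3/2-4\eta}\leq C R_0\log|n|,$$
which forces $|n|\leq R_0^{2/(3-8\eta)}(\log R_0)^{O(1)}\leq R_0^{2/3+O(\eta)}$. Thus the number of ``good'' $n$ in the target set is $\lesssim R_0^{2/3+O(\eta)}$, which is far smaller than $R_0^{(1-10\eta)(1-\eta_0)}$ once $\eta<1/100$.

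Adding the two contributions gives a total bounded by $R_0^{2/3+O(\eta)}(\log R_0)^{O(1)}+(2R_0^{1-10\eta})^{1-\delta}\leq 3R_0^{(1-10\eta)(1-\eta_0)}$, provided $\eta_0$ is chosen no larger than $\delta$ (which is compatible with the convention $\eta_0\ll\eta$ and $\delta=\eta^3/1000$) and $R_0\geq N_3$ with $N_3$ sufficiently large. The most delicate step is the linearisation: I need the MVT error $R_0^{-1}\max_{[y_0',y_0]}|f''_{N_n}|$ to remain a small fraction of $|f'_{N_n}(y_0)|\geq |N_n|^{2-4\eta}$ uniformly over $|N_n|\leq R_0^{1-10\eta}$, which requires combining the scale-by-scale Denjoy--Koksma bound \eqref{koks2} with the orbit-avoidance property encoded in $S$. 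Once this is in place, the rest of the argument is book-keeping.
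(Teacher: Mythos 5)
Your proposal is correct and takes essentially the same route as the paper: decompose into the exceptional set from Proposition \ref{mpr} (where $y\notin W_{N(y,\vphi_n(x))}$) plus the remaining good $n$'s, and for good $n$'s use linearization of $f_{N_n}$ together with the CLT bound \eqref{yeg1} on $\Delta\vphi_n$ to force $d_V\geq 1$. The paper packages the second step into Lemma \ref{sub2}, which shows via a first-order Taylor expansion with second-order error (controlled by Sublemmas \ref{sub1} and \ref{znp}) that $|f_{N_n}(y_0)-f_{N_n}(y'_0)|\geq|n|^{3/4}$ once $|n|>R_0^{9/10}$; your direct solve of the inequality yields the sharper threshold $R_0^{2/3+O(\eta)}$, but both are comfortably inside $R_0^{(1-10\eta)(1-\eta_0)}$, so the two presentations are interchangeable. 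One small inaccuracy: you cite \eqref{yegint} for $|N_n|\geq|n|/2$, but \eqref{yegint} controls $\vphi_n(x)$, not $N(y,\vphi_n(x))$; converting this into a lower bound on $N_n$ requires the Denjoy--Koksma/roof estimates in Sublemma \ref{sub1}, which gives $N_n\geq|n|/\log^6|n|$ — only a logarithmic loss, so the conclusion survives.
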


\begin{proof} 
By the definition of special flow every $n$ which belongs to the set satisfies $N(y,\vphi_n(x))=N(y',\vphi_n(x'))$ and we have
\begin{equation}\label{horsm}
 \left|\vphi_n(x)-\vphi_n(x')+
f_{N(y,\vphi_n(x))}(y_0)-f_{N(y,\vphi_n(x))}(y'_0)\right|<2.
\end{equation}
But since $x,x'\in E_0$, $|\vphi_n(x)-\vphi_n(x')|\leq 2|n|^{1/2}\log |n|$. Moreover, if $|n|>R_0^{9/10}$ and $y\in W_{N(y,\vphi_n(x))}$, then by Lemma \ref{sub2},  
$$
|f_{N(y,\vphi_n(x))}(y_0)-f_{N(y,\vphi_n(x))}(y_0')|\geq |n|^{3/4},
$$
so \eqref{horsm} does not hold. 
It remains to notice, that since $y\in V$, by Propostion \ref{mpr}  
$$
\left|\{n\in[-R_0^{1-10\eta},R_0^{1-10\eta}]\;:\; y \not\in W_{N(y,\vphi_n(x))}\}\right|\leq 2R_0^{(1-10\eta)(1-\eta_0)}.
$$
This finishes the proof.
\end{proof}
 
\begin{lemma}\label{sub2}Let $(x,y)\in G$ and $y'\in M$, $R_0>N_3$. \\
Then for every $n \in[-R_0^{(1-10\eta)},-R_0^{9/10}]\cup [R_0^{9/10},R_0^{(1-10\eta)}]$ such that $y\in W_{N(y,\vphi_n(x))}$ we have
$$ 
|f_{N(y,\vphi_n(x))}(y_0)-f_{N(y,\vphi_n(x))}(y'_0)|\geq |n|^{3/4}.
$$
\end{lemma}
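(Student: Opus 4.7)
The plan is to apply the mean value theorem to expand $f_N(y_0) - f_N(y'_0)$, where $N := N(y, \vphi_n(x))$, using the hypothesis $y \in W_N$ to secure a large derivative $f'_N(y_0)$, and then showing that $f'_N$ does not vary appreciably on the short segment from $y_0$ to $y'_0$ (of length $R_0^{-1}$). First I would check that $|N| \asymp |n|$: since $(x,y) \in G$, we have $x \in E_0 \cap F_0$, so \eqref{yeg1} and \eqref{yegint} give $|n|/2 \leq |\vphi_n(x)| \leq 2|n|$, while $y \in S$ forces the $R_\alpha$-orbit of $y_0$ to stay at distance $\gtrsim (|n|\log^{3}|n|)^{-1}$ from $0$ on the relevant scale; combined with \eqref{koks0} this yields $|f_N(y_0)| \asymp |N|$ and hence $|N| \asymp |\vphi_n(x)| \asymp |n|$.

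By the mean value theorem, $f_N(y_0) - f_N(y'_0) = f'_N(\xi)(y_0 - y'_0)$ for some $\xi$ between $y_0$ and $y'_0$; since $|y_0 - y'_0| = R_0^{-1}$, the lemma reduces to showing $|f'_N(\xi)| \geq 2|n|^{3/4} R_0$. The hypothesis $y \in W_N$ gives $|f'_N(y_0)| \geq |N|^{2-4\eta}$, so I would estimate the deviation $|f'_N(\xi) - f'_N(y_0)| \leq |f''_N(\zeta)| R_0^{-1}$ for some $\zeta$ in the same interval via a second application of the mean value theorem. By \eqref{koks2}, $|f''_N(\zeta)| \leq f''(\zeta^N_{min}) + 8 q_{s+1}^{3-\eta}$ with $q_s \leq |N| \leq q_{s+1}$. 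Since $y \in S$ and $R_0^{-1}$ is much smaller than the clearance of the $y_0$-orbit from $0$, one has $\zeta^N_{min} \gtrsim (|N|\log^{3}|N|)^{-1}$, so $f''(\zeta^N_{min}) \lesssim |N|^{3-\eta}(\log |N|)^{O(1)}$; the Diophantine condition $\alpha \in \cD$ gives $q_{s+1} \lesssim q_s (\log q_s)^{11/10}$, yielding $|f''_N(\zeta)| \lesssim |N|^{3-\eta}(\log |N|)^{O(1)}$.

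Combining the bounds, since $|n| \leq R_0^{1-10\eta}$, the error satisfies $|f'_N(\xi) - f'_N(y_0)| \lesssim |N|^{3-\eta} R_0^{-1} (\log |N|)^{O(1)} \leq R_0^{(1-10\eta)(3-\eta) - 1 + o(1)} = R_0^{1 - 31\eta + O(\eta^2)}$; and since $|n| \geq R_0^{9/10}$, the main term obeys $|f'_N(y_0)| \geq |N|^{2-4\eta} \geq R_0^{(9/10)(2-4\eta)} = R_0^{1.8 - 3.6\eta}$. For $\eta < 1/100$ the main term dominates the error, so $|f'_N(\xi)| \geq \tfrac12 |N|^{2-4\eta}$, and hence $|f_N(y_0) - f_N(y'_0)| \geq \tfrac12 |N|^{2-4\eta} R_0^{-1} \gtrsim R_0^{0.8 - 3.6\eta} \geq R_0^{0.75(1-10\eta)} \geq |n|^{3/4}$, as required.

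The main obstacle is ensuring that perturbing $y_0$ to a nearby $\zeta$ does not break the Denjoy--Koksma bound \eqref{koks2} for $f''_N$, which would happen if the $R_\alpha$-orbit of $\zeta$ made a much closer return to $0$ than that of $y_0$. This is prevented by taking $R_0 \geq N_3$ with $N_3$ sufficiently large, so that $R_0^{-1}$ is much smaller than the safety margin built into the definition of $S$, and by $\alpha \in \cD$ controlling $q_{s+1}$ in terms of $q_s$ so that the error term $q_{s+1}^{3-\eta}$ does not swamp the main contribution. The case $n < 0$ is handled symmetrically using the definitions of $\vphi_n$ and $N(y, \cdot)$ for negative arguments.
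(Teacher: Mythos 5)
Your overall plan matches the paper's proof of Lemma \ref{sub2}: expand $f_N(y_0)-f_N(y'_0)$ by Taylor/mean-value, use $y\in W_N$ to make the first-derivative term large, bound the second-derivative term by Denjoy--Koksma, and use $y\in S$ together with the Diophantine condition to keep the singularity out of $[y_0,y'_0]$ so the expansion is legitimate. The paper writes this as $|f_N(y_0)-f_N(y'_0)|\geq |f'_N(y_0)||y_0-y'_0|-|f''_N(\theta)||y_0-y'_0|^2$, controls $N$ via Sublemma~\ref{sub1} and $f''_N$ via Sublemma~\ref{znp}, and shows the second term is dominated by the first; your two applications of MVT are the same thing repackaged.

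There is, however, a genuine flaw in your exponent comparison. You bound the error by $R_0^{(1-10\eta)(3-\eta)-1+o(1)}$ and record this as $R_0^{1-31\eta+O(\eta^2)}$, but $(1-10\eta)(3-\eta)-1=2-31\eta+10\eta^2$, so the correct exponent is $2-31\eta+O(\eta^2)$, not $1-31\eta+O(\eta^2)$. With the corrected exponent, your claim ``the main term $R_0^{1.8-3.6\eta}$ dominates the error $R_0^{2-31\eta}$'' is \emph{false} for small $\eta$ (indeed for every $\eta<0.2/27.4\approx 0.0073$, which is inside the allowed range $\eta\in(0,1/100)$). The underlying problem is that you plugged the \emph{upper} end of the range $n\leq R_0^{1-10\eta}$ into the error term and the \emph{lower} end $n\geq R_0^{9/10}$ into the main term, effectively comparing bounds taken at two different values of $N$. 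The correct argument fixes $N$ and compares the ratio:
$$\frac{|f''_N(\zeta)|R_0^{-1}}{|f'_N(y_0)|}\lesssim \frac{|N|^{3-\eta}(\log|N|)^{O(1)}R_0^{-1}}{|N|^{2-4\eta}}=|N|^{1+3\eta}(\log|N|)^{O(1)}R_0^{-1}\lesssim R_0^{(1-10\eta)(1+3\eta)-1+o(1)}=R_0^{-7\eta+O(\eta^2)+o(1)}\to 0,$$
which is exactly the content of the paper's inequality $\tfrac{n^{2-4\eta}}{\log^{6}n}|y_0-y'_0|\geq 2n^{3+3\eta}|y_0-y'_0|^2$. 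So the conclusion you want, $|f'_N(\xi)|\geq\tfrac12|N|^{2-4\eta}$, is true and your subsequent derivation of $|n|^{3/4}$ from it is fine; but the step establishing that the error is dominated needs to be redone as a ratio at fixed $N$ (or equivalently $n$), not as a comparison of worst-case exponents taken at opposite ends of the interval.
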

\begin{proof} Let us conduct the proof for $n>0$, the case $n<0$ is analogous. Let $k\in \N$ be unique such that $\frac{1}{q_{k+1}}\leq d_H(y,y')=\|y_0-y'_0\|<\frac{1}{q_k}$, i.e. $q_k<R_0\leq q_{k+1}$. Since $\sup_{\T^2}|\vphi|<C$ and $\inf_\T f>c$, we have $N(y,\vphi_n(x))<C_0n$ for some constant $C_0$. By diophantine assumptions on $\a$ it follows that $n<{R_0^{1-10\eta}\leq q_{k+1}^{1-10\eta}}\leq q_k^{1-\frac{\delta}{2}}$ for some $\delta>0$. Since $y\in S$ it follows that 
$$
\sup_{0\leq i <N(y,\vphi_n(x))}d(y_0+i\a,0)\geq q_k^{-1+\frac{\delta}{4}}.
$$
Therefore, by $\|y_0-y'_0\|<\frac{1}{q_k}$, we have 
for $i=0,...,N(y,\vphi_n(x))$
$$
-i\a\notin [y_0,y'_0].
$$
So, for some $\th\in[y_0,y'_0]$.
\begin{equation}\label{byh}
|f_{N(y,\vphi_n(x))}(y_0)-f_{N(y,\vphi_n(x))}(y'_0)|\geq 
|f'_{N(y,\vphi_n(x))}(y_0)||y_0-y'_0|-
|f''_{N(y,\vphi_n(x))}(\th)||y_0-y'_0|^2
\end{equation}
But since $y\in W_{N(y,\vphi_n(x))}$ and by Sublemma \ref{sub1},
$$|f'_{N(y,\vphi_n(x))}(y_0)|\geq \left(N(y,\vphi_n(x))\right)^{2-4\eta}\geq  \frac{n^{2-4\eta}}{\log^{12} n}.
$$
Moreover by Sublemma \ref{znp} and since $|\vphi_n(x)|<C_0n$

$$|f''_{N(y,\vphi_n(x))}(\th)|\leq \left(\vphi_n(x)\right)^{3+2\eta}<n^{3+3\eta}.
$$

But $n\leq |y_0-y'_0|^{-1+10\eta}$ and so
$$
\frac{n^{2-4\eta}}{\log^6 n}|y_0-y'_0|\geq 2
n^{3+3\eta}|y_0-y'_0|^2.
$$
Therefore, in \eqref{byh} we have since $|y_0-y'_0|^{-1}\geq n\geq |y_0-y'_0|^{-9/10}$
$$
|f_{N(y,\vphi_n(x))}(y_0)-f_{N(y,\vphi_n(x))}(y'_0)|\geq 
1/2|f'_{N(y,\vphi_n(x))}(y_0)||y_0-y'_0|\geq |y_0-y'_0|^{-3/4}\geq n^{3/4}.
$$

This finishes the proof.
\end{proof}

\begin{sublemma}\label{sub1} If $(x,y)\in G$ then for $|n|\geq N_3$ 
$$N(y,\vphi_n(x))\geq \frac{|n|}{\log^6|n|}.$$ 
\end{sublemma}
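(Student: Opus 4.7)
The plan is to derive the bound from three ingredients: a linear lower bound on $\vphi_n(x)$ coming from $x \in F_0$, the defining inequality of $N(y,t)$, and a Denjoy--Koksma upper bound on $f_{N+1}(y_0)$ coming from $y \in S$. I will treat $n \geq N_3$; the case $n \leq -N_3$ is symmetric. Since $(x,y) \in G$ (see \eqref{defg}), $x \in F_0$ gives $\vphi_n(x) \geq n/2$ by \eqref{yegint}, and $y = (y_0, s_0)$ satisfies $s_0 \leq f(y_0) < 10^4$. With $N := N(y, \vphi_n(x))$, the defining relation \eqref{D-C} of the special flow yields $f_{N+1}(y_0) \geq \vphi_n(x) + s_0 \geq n/2$. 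Hence it is enough to produce an upper bound of the shape $f_{N+1}(y_0) \leq C (N+1) \log^{11/10}(N+1)$; combined with the a priori estimate $N+1 \leq 2n/\inf f$, this gives $N \geq n/\log^6 n$ once $n$ is large.

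For the upper bound on $f_M(y_0)$ with $M = N+1$, I would apply Lemma \ref{koksi}: if $M \in [q_s, q_{s+1}]$ then $f_M(y_0) \leq f(y_{\min}^M) + 3 q_{s+1}$, and $\alpha \in \cD$ gives $q_{s+1} \leq C q_s \log^{11/10} q_s \leq C M \log^{11/10} M$. What remains is the key lower bound
\[ y_{\min}^M \geq \frac{1}{q_{s+1} \log^3 q_{s+1}}, \]
since by \eqref{asu} this yields $f(y_{\min}^M) \leq 2(q_{s+1} \log^3 q_{s+1})^{1-\eta} \leq q_{s+1}$ for $q_{s+1}$ large (using $q_{s+1}^\eta \gg \log^3 q_{s+1}$). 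Equivalently, setting $M_{s+1} := N(y, q_{s+1}\log q_{s+1})$, the inclusion $y \in S_{s+1}$ already forces $\|y_0 + j\alpha\| \geq 1/(q_{s+1}\log^3 q_{s+1})$ for all $0 \leq j \leq M_{s+1}$, so the lower bound would follow from the bootstrap claim $M_{s+1} \geq q_{s+1}$.

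This bootstrap is the heart of the argument and where I expect the main obstacle. I would argue by contradiction: if $M_{s+1} < q_{s+1}$, applying Lemma \ref{koksi} to $M_{s+1}$ together with the $S_{s+1}$ bound on $y_{\min}^{M_{s+1}}$ already gives $f_{M_{s+1}}(y_0) \leq 4 q_{s+1}$, which is much smaller than $q_{s+1}\log q_{s+1}$. The defining property of $M_{s+1}$ then forces
\[ f(y_0 + M_{s+1}\alpha) = f_{M_{s+1}+1}(y_0) - f_{M_{s+1}}(y_0) \geq \tfrac{1}{2}\, q_{s+1} \log q_{s+1}, \]
so by \eqref{asu} the distance $\|y_0 + M_{s+1}\alpha\|$ is at most $C(q_{s+1}\log q_{s+1})^{-1/(1-\eta)}$, which is strictly smaller than $1/(q_{s+1}\log^3 q_{s+1})$ for $q_{s+1}$ large thanks to the gap between the singularity exponent $1-\eta$ and $1$. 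But $y_0 + M_{s+1}\alpha$ is the first coordinate of $K_t(y)$ at flow time $t = f_{M_{s+1}}(y_0) - s_0 \leq 4q_{s+1} < q_{s+1}\log q_{s+1}$, so the inclusion $y \in S_{s+1}$ gives $\|y_0 + M_{s+1}\alpha\| \geq 1/(q_{s+1}\log^3 q_{s+1})$ -- contradiction. The remaining edge case $M < q_{n_1}$, where no $S_{s+1}$ with $s+1 \geq n_1$ is available to start the bootstrap, is handled by choosing $N_3 \geq 2 q_{n_1}^3$: then $y \in S_{n_1}$ together with the same argument applied at scale $n_1$ yields the crude bound $f_M(y_0) \leq M (q_{n_1}\log^3 q_{n_1})^{1-\eta} \leq q_{n_1}^3 < n/2$, which is already incompatible with $f_{N+1}(y_0) \geq n/2$.
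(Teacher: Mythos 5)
Your proof is correct and follows essentially the same route as the paper: use $x\in F_0$ to get $\vphi_n(x)\geq n/2$, read this as a lower bound on $f_{N+1}(y_0)$ via the defining inequality of $N(y,t)$, then use $y\in S$ together with Lemma \ref{koksi} and $\alpha\in\cD$ to get the polynomial-in-$N$ upper bound on $f_{N+1}(y_0)$ that forces $N\gtrsim n/\mathrm{polylog}(n)$. The one genuine difference is that you make explicit the bootstrap needed to convert the $S$-condition, which constrains \emph{flow times}, into a lower bound on $\|y_0+j\alpha\|$ over a range of \emph{rotation indices} $j\leq M$. The paper selects $k$ with $q_k\log q_k\leq N< q_{k+1}\log q_{k+1}$ and asserts the distance bound directly, leaving this conversion implicit; your contradiction argument via $M_{s+1}=N(y,q_{s+1}\log q_{s+1})$ (using the exponent gap $1/(1-\eta)>1$ to rule out $M_{s+1}<q_{s+1}$) spells it out correctly, and your handling of the initial edge case by taking $N_3$ large is reasonable. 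The net gain of your version is transparency on the flow-time/rotation-count dictionary; the paper's version is shorter at the cost of leaving that point to the reader.
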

\begin{proof} Assume that $n>0$, the proof in case $n<0$ is analogous. Since $(x,y)\in G$, we know that $x\in F_0$. Hence for $n\geq N_3$, by \eqref{yegint} we know that $\vphi_n(x)\geq \frac{n}{2}$. Therefore, by the definition of special flow, we have
$$
\frac{n}{2}\leq \vphi_n(x)<f_{N(y,\vphi_n(x))+1}.
$$ 
Let $k\in \N$ be unique such that $q_k\log q_k\leq N(y,\vphi_n(x))<q_{k+1}\log q_{k+1}$. Then since  $y\in S$ (see \eqref{gse}), we know that
$$
\min_{0\leq i<N(y,\vphi_n(x))+1}d_H(y_0+i\alpha,0)\geq \frac{1}{q_{k+1}\log^{4}q_{k+1}}.
$$

Therefore, and by \eqref{koks0} in Lemma \ref{koksi}, we have
$$
f_{N(y,\vphi_n(x))}\leq 10q_{k+1}\leq N(y,\vphi_n(x))\log^3 N(y,\vphi_n(x)). 
$$
So $\frac{n}{2}< N(y,\vphi_n(x))\log^3 N(y,\vphi_n(x))$ and this finishes the proof. 
\end{proof}

\begin{sublemma}\label{znp} Fix $y\in S$ and $t\in \R$. For every $\th\in \T$ such that $\|\th-y_0\|<\frac{1}{|t|\log^3|t|}$ we have 
$$|f''_{N(y_0,t)}(\th)|<t^{3+3\eta}.$$
\end{sublemma}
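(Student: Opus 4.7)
The plan is to apply the Denjoy--Koksma estimate \eqref{koks2} of Lemma~\ref{koksi} directly at the point $\theta$. Setting $N := N(y_0, t)$ and choosing $s$ with $N \in [q_s, q_{s+1}]$, this gives
$$|f''_N(\theta)| \leq f''(\theta_{\min}^N) + 8 q_{s+1}^{3-\eta},$$
so it suffices to bound each term by something $\ll |t|^{3+3\eta}$. The second term is the easy one: because $f$ is bounded below by a positive constant, $N \leq C|t|$, and the diophantine hypothesis $\alpha \in \cD$ gives $q_{s+1} \leq q_s \log^{11/10} q_s \leq C|t|\log^{2}|t|$ for large $|t|$. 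Hence $8 q_{s+1}^{3-\eta} \leq C|t|^{3-\eta}\log^{c}|t| \ll |t|^{3+3\eta}$.

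The substance of the argument is the lower bound on $\theta_{\min}^N$. First I would use $y\in S$ to control $y_{\min}^N$: let $n$ be the smallest integer with $q_n \log q_n \geq |t|$. Then $y \in S_n$ forces the orbit of $y$ under $K_s$ for $|s| \leq q_n\log q_n$ to avoid the cusp of horizontal width $1/(q_n\log^3 q_n)$, so every horizontal coordinate $y_0 + j\alpha$ for $0 \leq j \leq N$ satisfies $\|y_0+j\a\|\geq 1/(q_n\log^3 q_n)$, giving $y_{\min}^N \geq 1/(q_n \log^3 q_n)$. The diophantine bound $q_n \leq q_{n-1}\log^{11/10} q_{n-1} \leq |t|\log^{11/10}|t|$ then yields $y_{\min}^N \geq c_0/(|t|\log^{c_1}|t|)$ for fixed constants $c_0, c_1 > 0$. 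Transferring to $\theta$: from $\|\theta + j\alpha\| \geq \|y_0 + j\alpha\| - \|\theta - y_0\|$ for every $j$, together with the hypothesis $\|\theta - y_0\| < 1/(|t|\log^3|t|)$, I obtain
$$\theta_{\min}^N \geq y_{\min}^N - \|\theta - y_0\| \geq |t|^{-1-\delta}$$
for any small $\delta > 0$ once $|t|$ is large enough, since any positive power of $|t|$ eventually dominates any power of $\log|t|$.

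To conclude, invoking the asymptotic $f''(u) \leq C|u|^{-(3-\eta)}$ from \eqref{asu3} gives $f''(\theta_{\min}^N) \leq C|t|^{(1+\delta)(3-\eta)}$. Choosing $\delta$ small enough, say $\delta < 4\eta/(3-\eta)$, forces $(1+\delta)(3-\eta) < 3+3\eta$, so this term is also comfortably below $|t|^{3+3\eta}$, completing the proof.

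The principal obstacle is the careful bookkeeping of logarithmic factors in the comparison between $y_{\min}^N$ and $\|\theta - y_0\|$: one must ensure the diophantine-controlled lower bound on $y_{\min}^N$ genuinely dominates $\|\theta - y_0\|$ for all sufficiently large $|t|$, and this is what ultimately exploits the polynomial slack between the natural Kochergin scale $|t|^{3-\eta}$ and the target $|t|^{3+3\eta}$, which absorbs the stray logarithmic exponents.
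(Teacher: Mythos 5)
Your overall plan is a reasonable and slightly more direct variant of the paper's route. The paper first uses \eqref{koks0} and \eqref{koks2} to derive $|f''_{N(y_0,t)}(\theta)|\leq |f_{N(y_0,t)}(\theta)|^{3+2\eta}$ and then bounds $f_N(\theta)$ by $t\log^4 t$; you instead apply \eqref{koks2} directly and estimate $f''(\theta_{\min}^N)$ using \eqref{asu3}. Both approaches run through exactly the same bottleneck: a quantitative lower bound on $\theta_{\min}^N$. Your bound on the ``non-resonant'' term $8q_{s+1}^{3-\eta}$ is correct, as is the identification of $y\in S$ as the source of the control on $y_{\min}^N$.

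However, there is a genuine gap in the crucial subtraction step. You claim
$$\theta_{\min}^N\geq y_{\min}^N-\|\theta-y_0\|\geq |t|^{-1-\delta},$$
invoking the remark that any power of $|t|$ beats any power of $\log|t|$. That remark justifies lower-bounding \emph{each} of $y_{\min}^N$ and $1/(|t|\log^3|t|)$ by $|t|^{-1-\delta}$, but it says nothing about the \emph{difference} $y_{\min}^N-\|\theta-y_0\|$, and in fact the two terms are of the same order with the subtrahend potentially larger. Concretely: with $n$ the smallest integer satisfying $q_n\log q_n\geq|t|$, the diophantine condition forces only $q_{n-1}\geq c|t|/\log^{21/10}|t|$ and allows $q_n$ to be as large as $C|t|\log^{1/10}|t|$, so your lower bound reads $y_{\min}^N\geq 1/(q_n\log^3 q_n)\geq c_0/(|t|\log^{31/10}|t|)$. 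Since $31/10>3$, this is \emph{smaller} than the permitted $\|\theta-y_0\|<1/(|t|\log^3|t|)$ for large $|t|$, so $y_{\min}^N-\|\theta-y_0\|$ can be nonpositive and your chain of inequalities collapses — before you even reach the point where the polynomial slack $|t|^{3+3\eta}$ versus $|t|^{3-\eta}$ could absorb logs.

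For what it is worth, a literal reading of the paper's own line ``since $y\in S$ and $\|\theta-y_0\|<1/(t\log^3t)$ we know that $\theta_{\min}^N\geq 1/(2q_{k+1}\log^3 q_{k+1})$'' runs into the same arithmetic, so the gap is not an artifact of your restructuring. What makes the sublemma safe to use is that in its only application (inside Lemma \ref{sub2}) one has $\|\theta-y_0\|\leq\|y_0-y'_0\|=R_0^{-1}$ while $t<C_0R_0^{1-10\eta}$, so $\|\theta-y_0\|$ is in fact polynomially smaller than $1/(t\log^3 t)$ and the subtraction is harmless. If you want a proof of the statement exactly as written, you must either strengthen the hypothesis on $\|\theta-y_0\|$ (say to $\|\theta-y_0\|<|t|^{-1-\epsilon}$), or supply a sharper lower bound for $y_{\min}^N$ than $y\in S$ gives at the scale $q_n\log q_n\approx|t|$; as it stands, the key inequality $\theta_{\min}^N\geq|t|^{-1-\delta}$ is not a consequence of what you wrote.
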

\begin{proof}We will conduct the proof in the case $t>0$, the oposite case is analogous.
By Lemma \ref{koksi}  (using the estimates in \eqref{koks0} and \eqref{koks2}) we have
$$
|f''_{N(y_0,t)}(\th)|\leq |f_{N(y_0,t)}(\th)|^{3+2\eta}.
$$
 Therefore it is enough to show that
$$
|f_{N(y_0,t)}(\th)|\leq t\log^{4}t.
$$
Let $k\in \N$ be unique such that $q_k\leq N(y_0,t)<q_{k+1}$.
Notice that since $y\in S$ and $\|\th-y_0\|<\frac{1}{t\log^3t}$ we know that 
$$
\min_{0\leq i<N(y_0,t)}d(\th+i\a,0)\geq \frac{1}{2q_{k+1}\log^3 q_{k+1}}.
$$
Therefore, by Lemma \ref{koksi} and diophantine assumptions on $\a$, and $N(y_0,t)\leq (\inf_\T f)t$
$$
f_{N(y_0,t)}(\th)< 2q_{k+1}\leq N(y_0,t)\log^3N(y_0,t)\leq t\log^4t.
$$
This finishes the proof.
\end{proof}

\section{Growth of the derivative, proof of Proposition \ref{mpr}}\label{p:mpr}
Recall that for $y\in M$, $y_0\in \T$ denote the first coodinate of $y$ and that $W_n=\{y\;:\; |f'_n(y_0)|\geq |n|^{2-4\eta}\}$.

Proposition \ref{mpr} will follow from the proposition and lemma below:

\begin{proposition}\label{prob} We have 
$$
\lim_{|N|\to +\infty }|N|^{-1+\frac{\eta}{10}}\sum_{i=0}^{N-1}\chi_{W_i^c}(y)=0.
$$
\end{proposition}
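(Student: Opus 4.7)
The plan is to reduce Proposition~\ref{prob} to a polynomial Lebesgue-measure estimate on $W_n^c$ and then pass to the pointwise statement via Chebyshev + Borel--Cantelli along a geometric subsequence. Concretely, if one can show $\mu(W_n^c)\leq Cn^{-\delta_0}$ for some $\delta_0>\eta/10$, then by Fubini $\sum_{i=0}^{N-1}\mu(W_i^c)\leq C'N^{1-\delta_0}$, so Chebyshev along $N_k=2^k$ gives
$$
\mu\Bigl\{y\in M:\sum_{i=0}^{N_k-1}\chi_{W_i^c}(y)>\varepsilon N_k^{1-\eta/10}\Bigr\}\leq \frac{C'}{\varepsilon}\,N_k^{\eta/10-\delta_0},
$$
which is summable in $k$. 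Borel--Cantelli combined with $\varepsilon\downarrow 0$ along a countable sequence yields $N_k^{-(1-\eta/10)}\sum_{i=0}^{N_k-1}\chi_{W_i^c}(y)\to 0$ for $\mu$-a.e.\ $y$, and monotonicity of the partial sums together with $N_{k+1}=2N_k$ extends this to the full limit $N\to+\infty$; the case $N\to-\infty$ is analogous.

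\textbf{The measure bound.} The probabilistic heart is proving $\mu(W_n^c)\leq Cn^{-\delta_0}$. By Lemma~\ref{koksi}, for $n\in[q_s,q_{s+1}]$ with $y^n_{min}\leq c/q_{s+1}$ (and $c>0$ small enough),
$|f'_n(y_0)|\geq (y^n_{min})^{-(2-\eta)}-8q_{s+1}^{2-\eta}\gg q_{s+1}^{2-\eta}\gg n^{2-4\eta}$,
so such $y_0$ is automatically in $W_n$. Hence $W_n^c\subset\{y^n_{min}>c/q_{s+1}\}$, where the Koksma error and the main term are comparable and cancellations in the alternating Birkhoff sum $f'_n(y_0)$ can in principle push $|f'_n|$ below $n^{2-4\eta}$. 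Exploiting the near-odd symmetry of $f'$ at $0$ (recall $f'(y)\sim -\sgn(y)|y|^{-(2-\eta)}$), one decomposes $f'_n$ over the close-return scales $q_s, q_{s-1},\ldots$ and obtains, up to a controlled remainder, a leading contribution of the form $-\sgn(\beta_s)|\beta_s|^{-(2-\eta)}F(\tilde a(y_0))$, where $F(\tilde a)=\sum_{m\in\Z}\sgn(m-\tilde a)|m-\tilde a|^{-(2-\eta)}$ has simple zeros at $\tilde a\in\{0,1/2\}$ and $\tilde a(y_0)\in(-1/2,1/2)$ is an affine function of $y_0$ with derivative $\sim q_{s+1}$. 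The condition $|f'_n|<n^{2-4\eta}$ then forces $|F(\tilde a)|<Cn^{-3\eta}$, which constrains $\tilde a$ to a set of Lebesgue measure $O(n^{-3\eta})$. Pulling back via $y_0\mapsto\tilde a$ (Jacobian $\sim q_{s+1}\sim n$) and accounting for the roof weight $f(y_0)\lesssim q_{s+1}^{1-\eta}\lesssim n^{1-\eta}$ (valid since $|y_0|\geq y^n_{min}>c/q_{s+1}$ on $W_n^c$) yields $\mu(W_n^c)\leq Cn^{-4\eta+\varepsilon}$, which far exceeds the required threshold $\eta/10$.

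\textbf{Main obstacle.} The difficult step is the measure bound itself: Lemma~\ref{koksi} alone is insufficient, since its error $8q_{s+1}^{2-\eta}$ is of the same order as the main term $(y^n_{min})^{-(2-\eta)}$ at the generic scale $y^n_{min}\sim n^{-1}$, and so cannot detect the required non-cancellation. One must identify the phase $\tilde a$ as the correct random variable, analyse the zeros of the Hurwitz-type function $F$, and carefully verify that the sub-leading scales $q_{s'}$ (with $s'<s$) and the other close-return orbits contribute only a controlled remainder that does not destroy the cancellation estimate for the leading term. Once this decomposition is in place, the transition from measure decay to a.e.\ pointwise statement is a standard Chebyshev + Borel--Cantelli argument.
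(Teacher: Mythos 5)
Your second step (Chebyshev along the dyadic times $N_k=2^k$, Borel--Cantelli, then interpolation using monotonicity of partial sums and the doubling $N_{k+1}=2N_k$) is correct and essentially equivalent to what the paper does: the paper instead invokes an $L^2$ version (Lyons' lemma, that $\sum\|Y_n\|_2^2<\infty$ forces $Y_n\to 0$ a.s.) along a slowly-growing sequence with gaps $|N_{k+1}-N_k|\le N_k^{1-\eta/5}$, but the underlying mechanism is the same. Your $L^1$/geometric variant is perfectly fine and arguably cleaner.

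The genuine gap is in the measure estimate $\mu(W_n^c)\le C n^{-\delta_0}$, which you yourself flag as the ``main obstacle'' and do not actually carry out. You try to control $|f'_n|$ directly through Koksma, correctly observe that the error $8q_{s+1}^{2-\eta}$ and the main term are comparable at the generic scale $y^n_{\min}\sim n^{-1}$, and then propose a phase/cancellation analysis with a Hurwitz-type function $F(\tilde a)=\sum_m \sgn(m-\tilde a)|m-\tilde a|^{-(2-\eta)}$. This route may be viable but you leave its hardest steps (the decomposition over close-return scales, control of the sub-leading contributions, the Jacobian change of variables together with the roof weight) unverified, so the argument as written has a hole exactly where the difficulty lies.

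The paper avoids this cancellation analysis entirely, and it is worth seeing why its route is both shorter and robust. The key observation is that $f''>0$ near the singularity, so $f''_n$ is (up to a bounded negative contribution from the regular part of $f$) a sum of positive terms with \emph{no cancellations}. Lemma~\ref{koksi} then yields the uniform lower bound $f''_n(\theta)\ge f''(\theta^n_{\min})\gtrsim q_s^{3-\eta}\ge n^{3-2\eta}$ for every $\theta$, using only $\theta^n_{\min}\lesssim q_s^{-1}$ and the Diophantine condition $q_{s+1}\le q_s\log^{11/10}q_s$. Now restrict to one interval $I=(a,b]$ of the partition $\cI$ of $\T$ by $\{-i\alpha\}_{i=0}^{n-1}$. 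On $I$ the function $f'_n$ is $C^1$, tends to $-\infty$ at $a^+$ and $+\infty$ at $b^-$, and hence has a unique zero $x_I\in I$. The mean value theorem gives $|f'_n(x)|=|f''_n(\theta_x)|\,|x-x_I|\ge n^{3-2\eta}|x-x_I|$, so $|f'_n(x)|\ge n^{2-4\eta}$ as soon as $|x-x_I|\ge n^{-(1+2\eta)}$. Consequently $W_n^c\cap I$ is contained in an interval of length $2n^{-(1+2\eta)}$ around $x_I$, there are $n$ intervals $I$, and after weighting by the roof one gets $\mu(W_n^c)\le Cn^{-3\eta/2}$. This bypasses the delicate phase analysis altogether: the cancellation problem you worried about in $f'_n$ simply does not occur in $f''_n$, and $f''_n$ controls how far $f'_n$ can stay small. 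You should replace your phase-function argument by this mean-value/second-derivative argument; as it stands, your measure bound is not established.
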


\begin{lemma}\label{con} Fix $(x,y)\in \T^2\times M$ and $M\in \Z$. If  
$|\vphi_M(x)-M|\leq M^{2/3}$, $|f_M(y)-M|<M^{1-\frac{\eta}{2}}$ and 

\begin{equation}\label{ase}\left|\{i\in[0,M]\;:\; y\notin W_i\}\right|<|M|^{1-\frac{\eta}{10}},
\end{equation}
 then 
\begin{equation}\label{eq:con}
\left|\{i\in[0,M]\;:\; y\notin W_{N(y,\vphi_i(x))}\}\right|<|M|^{1-\frac{\eta^3}{1000}}.
\end{equation}
\end{lemma}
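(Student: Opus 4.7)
The plan is to change variables from the skew-product time index $i \in [0, M]$ to the base time $n_i := N(y, \vphi_i(x))$, then exploit the structure of the singularity of $f$ via $\alpha \in \cD$. Write $\mathcal{B} := \{n \in [0, M]: y \notin W_n\}$ for the bad set of $n$-indices, so that hypothesis \eqref{ase} gives $|\mathcal{B}| < M^{1-\eta/10}$.

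First I would bound the range of the map $i \mapsto n_i$ on $[0, M]$. The defining inequality $f_{n_i}(y_0) \le \vphi_i(x) < f_{n_i+1}(y_0)$ combined with the hypotheses $|\vphi_M(x)-M| \le M^{2/3}$ and $|f_M(y_0)-M| < M^{1-\eta/2}$ forces $n_i \in [0, M + CM^{1-\eta/2}]$ for all $i \in [0, M]$. Enlarging to $\mathcal{B}' := \mathcal{B} \cup \{n \in (M, M + CM^{1-\eta/2}] : y \notin W_n\}$ gives $|\mathcal{B}'| \le |\mathcal{B}| + CM^{1-\eta/2} = O(M^{1-\eta/10})$, using $\eta/10 < \eta/2$. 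Partitioning the target set by the value of $n_i$,
\[
\left|\{i \in [0, M] : y \notin W_{n_i}\}\right| = \sum_{n \in \mathcal{B}'} \left|\{i \in [0, M] : n_i = n\}\right|.
\]
The fiber $\{i : n_i = n\}$ is the $i$-preimage of the interval $[f_n(y_0), f_{n+1}(y_0))$, whose length is $f(y_0 + n\alpha)$. Since $\vphi$ is smooth and (essentially) positive with mean $\vphi_0 = 1$, the sequence $\vphi_i(x)$ is nearly monotone with unit drift, so $|\{i : n_i = n\}| \le C(f(y_0+n\alpha) + 1)$.

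The heart of the proof is then bounding $\sum_{n \in \mathcal{B}'} f(y_0 + n\alpha)$, which is maximized when $\mathcal{B}'$ consists of those $n$'s for which $y_0 + n\alpha$ is closest to the singularity at $0$. The Diophantine assumption $\alpha \in \cD$, via the three-distance theorem, rules out orbit clustering: the $k$-th closest point of $\{y_0 + n\alpha : n \in [0, M+CM^{1-\eta/2}]\}$ to $0$ lies at distance at least $ck/(M \log^c M)$. Combined with the singular profile $f(y) \sim |y|^{-(1-\eta)}$ from \eqref{asu}, the $k$-th largest value $F_k$ of $f$ on this orbit satisfies $F_k \le C(M \log^c M/k)^{1-\eta}$, so
\[
\sum_{n \in \mathcal{B}'} f(y_0 + n\alpha) \;\le\; \sum_{k=1}^{|\mathcal{B}'|} F_k \;\le\; \tfrac{C}{\eta}\, M^{1-\eta}(\log M)^c \cdot |\mathcal{B}'|^{\eta} \;\le\; \tfrac{C}{\eta}\, M^{1-\eta^{2}/10}(\log M)^c,
\]
using the identity $(1-\eta)+\eta(1-\eta/10)=1-\eta^{2}/10$. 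Since $\eta^{2}/10 \gg \eta^{3}/1000$ for small $\eta$, this is comfortably below $M^{1-\eta^{3}/1000}$ for $M$ large.

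The main obstacle is the sum estimate above. Establishing quantitative control on close approaches of $\{y_0 + n\alpha\}$ to $0$ without any Diophantine hypothesis on $y_0$ requires combining $\alpha \in \cD$, the three-distance theorem, the power-law profile of $f$ from \eqref{asu}, and the Denjoy-Koksma framework already set up in Lemma \ref{koksi}; the global bound $|f_M(y_0)-M| < M^{1-\eta/2}$ cross-checks the individual $F_k$ estimates. Combining the fiber bound with this $f$-sum estimate then gives $|\{i \in [0, M]: y \notin W_{n_i}\}| \le C M^{1-\eta^{2}/10}(\log M)^c < M^{1-\eta^{3}/1000}$, which is the desired conclusion.
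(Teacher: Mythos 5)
Your approach is genuinely different from the paper's, but it has a gap at the step you identify as "the heart of the proof."

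You reduce to bounding $\sum_{n\in\mathcal B'} f(y_0+n\alpha)$ and propose to do so by sorting the values $f(y_0+n\alpha)$, $n\le M'$, as $F_1\ge F_2\ge\dots$ and claiming $F_k\le C(M\log^c M/k)^{1-\eta}$. The three-distance theorem and the Diophantine condition justify this only for $k\ge 2$: the orbit points are spaced $\gtrsim 1/q_{s+1}\gtrsim 1/(M\log^{11/10}M)$ apart, so the $k$-th closest point to $0$ is at distance $\gtrsim (k-1)/(M\log^{11/10}M)$. But $y_0$ itself carries no Diophantine hypothesis, so the \emph{closest} orbit point can be arbitrarily close to $0$ and $F_1$ can be as large as $\Theta(M)$, far exceeding your claimed bound $M^{1-\eta}\log^c M$. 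The hypothesis $|f_M(y)-M|<M^{1-\eta/2}$ constrains only the aggregate $\sum_n f(y_0+n\alpha)$, not the single largest term; e.g.\ $F_1\approx M/2$ with the remaining $M-1$ terms summing to $\approx M/2$ is consistent with it together with the two-sided Denjoy--Koksma estimate in Lemma~\ref{koksi}. Since the corresponding index $n^*$ can lie in $\mathcal B'$ (membership in $W_{n^*}$ is governed by $|f'_{n^*}(y_0)|$, which does not include the term $f'(y_0+n^*\alpha)$ and may be small), your intermediate bound $\sum_{n\in\mathcal B'}f(y_0+n\alpha)\lesssim M^{1-\eta^2/10}$ is not established. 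Your remark that the global bound on $f_M(y)$ ``cross-checks'' the $F_k$ estimates gestures at the missing ingredient but does not supply it.

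The paper's proof circumvents exactly this difficulty by never isolating individual $f$-values. It passes to the Lebesgue measure of the bad flow-time set $V_M\subset\R$, cuts the index range into consecutive blocks of length $q_k\approx M^{\eta^2/100}$, and applies a refined Denjoy--Koksma \emph{lower} bound $f_{q_k}(z)\ge q_k-2q_k^{1-\eta}$ on each block entirely disjoint from $\{i: y\notin W_i\}$. Summing over good blocks yields a lower bound $\ge M - o(M)$ for the good flow-time, and subtracting from the known total $f_M(y)\le M+M^{1-\eta/2}$ bounds the bad flow-time without ever needing an upper bound on a single $f(y_0+n\alpha)$. If you want to salvage your pointwise decomposition, you would need to replace the $F_k$ estimate for small $k$ with precisely this complementary-sum argument: show $\sum_{n\notin\mathcal B'}f(y_0+n\alpha)\ge M-O(M^{1-\eta^3/100})$ via block-Denjoy--Koksma, then subtract. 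At that point you have essentially reproduced the paper's proof.
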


\begin{proof}[Proof of Proposition \ref{mpr}]
By Proposition \ref{prob} and Egorov theorem, there exists a set $V_0\in M$, $\mu(V_0)\geq 99/100$ and $N_0\in \N$ such that for every $y\in V_0$ and $|M|\geq N_0$ \eqref{ase} is satisfied.   Define $V:=(\T^2\times V_0)\cap (E_0\times M)\cap (\T^2\times S)$ (see \eqref{yeg1} and \eqref{gse}) and $N_1$ sufficiently large. Then since $x\in E_0$,  $|\vphi_N(x)-N|\leq N^{2/3}$. Moreover, 
let $s$ be unique such that $q_s\leq N<q_{s+1}$
 since $y\in S$, for $s\geq n_0$
$$
\{x+i\a\}_{i=-q_{s+1}\log q_{s+1}}^{q_{s+1}\log q_{s+1}}\notin [-\frac{1}{q_{s+1}\log^3 q_{s+1}},\frac{1}{q_{s+1}\log^3 q_{s+1}}].
$$
Hence, by \eqref{koks0} in Lemma \ref{koksi}
$$|f_N(y)-N|< N^{1-\frac{\eta}{2}}.
$$
So for any $(x,y)\in V$ and $|N|\geq N_1$ sufficiently large, the assumptions of Lemma \ref{con} are satisfied. Therefore \eqref{eq:con} holds for $y$ and $M$. By taking $M=N, -N$ we get that also \eqref{eq:m} holds . The proof of Proposition \ref{mpr} is thus finished.  
\end{proof}

Therefore it remains to prove Proposition \ref{prob} and Lemma \ref{con}.

\paragraph{Proof of Proposition \ref{prob}}
We will first prove the following lemma:
\begin{lemma}\label{smd}
There exists a constant $C>0$ such that for every $n\in \Z$ 
$$
\mu(W^c_n)<C|n|^{-\frac{3\eta}{2}}.
$$
\end{lemma}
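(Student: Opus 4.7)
The plan is to exploit the monotonicity of $f'_n$ on each arc of $\T$ cut out by the singularities $\{-j\a : 0\leq j < |n|\}$ of $f'_n$. By \eqref{asu3}, $f''$ has a positive singularity of order $y^{-(3-\eta)}$ near $0$ from both sides, and by \eqref{koks2} in Lemma \ref{koksi}, $f''_n(y) \geq f''(y^n_{min})$ globally. Since $y^n_{min}$ is always at most half the largest arc of the partition of $\T$ induced by the orbit $\{-j\a\}$, we have $f''_n > 0$ everywhere and $f'_n$ is strictly increasing on each arc $I_j$. Combined with $f'_n \to \pm\infty$ at the arc endpoints, this yields exactly one zero $y_j^*\in I_j$.

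Next I would estimate the length of $W_n^c\cap I_j = \{y\in I_j : |f'_n(y)| < |n|^{2-4\eta}\}$. Writing $I_j = (a_j,a_{j+1})$ and $\ell_j = a_{j+1}-a_j$, for every $y\in I_j$ one has $y^n_{min}(y) = \min(|y-a_j|,|y-a_{j+1}|) \leq \ell_j/2$. Since $f''$ is decreasing in $|y|$ near $0$, the lower bound $f''_n(y) \geq f''(y^n_{min}(y)) \geq c(\ell_j/2)^{-(3-\eta)}$ holds uniformly on $I_j$. By the Mean Value Theorem applied to $f'_n$ around $y_j^*$,
$$
|W_n^c\cap I_j| \leq \frac{2|n|^{2-4\eta}}{\inf_{I_j} f''_n} \leq C |n|^{2-4\eta} \ell_j^{3-\eta}.
$$
Summing over all $|n|$ arcs and using $\sum_j \ell_j = 1$ together with the elementary inequality $\sum_j \ell_j^{3-\eta} \leq \ell_{\max}^{2-\eta} \sum_j \ell_j$,
$$
\mu(W_n^c) \leq C |n|^{2-4\eta} \ell_{\max}^{2-\eta}.
$$

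To close the estimate I would invoke the three-gap theorem together with $\a\in\cD$, which via $q_{s+1}\leq q_s\log^{11/10}(q_s)/C$ yields, for $|n|\in[q_s,q_{s+1}]$, a maximal arc length $\ell_{\max} \leq C'/q_s \leq C'' \log^{11/10}(|n|)/|n|$. Substituting gives
$$
\mu(W_n^c) \leq C_0 |n|^{-3\eta} (\log|n|)^{11(2-\eta)/10} \leq C|n|^{-3\eta/2}
$$
for $|n|$ sufficiently large, while small $|n|$ is absorbed by enlarging $C$. The case $n < 0$ reduces to the positive case via the identity $f'_{-n}(y) = -f'_n(y - n\a)$.

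The principal obstacle is the strict positivity of $f''_n$ used to produce a unique zero on each arc: although $f''$ may take small or negative values away from $0$, the global lower bound in \eqref{koks2} of Lemma \ref{koksi} handles this. Once this is in hand, the remainder is a straightforward geometric summation in which the power $|n|^{2-4\eta}$ is defeated by the $|n|^{-(2-\eta)}$ coming from the smallness of $\ell_{\max}$, leaving the required polynomial gain $|n|^{-3\eta/2}$ up to logarithmic losses absorbed by the slightly weakened exponent.
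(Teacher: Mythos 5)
Your proof follows essentially the same strategy as the paper's: partition $\T$ by the orbit points $\{-j\a\}_{j=0}^{n-1}$, use the Denjoy--Koksma lower bound \eqref{koks2} to force $f''_n>0$ and hence a unique zero $x_I$ of $f'_n$ on each arc, apply the mean value theorem around that zero, and sum over arcs using the Diophantine control on $\ell_{\max}$. The only genuine technical departure is in the per-arc lower bound on $f''_n$: you use $y^n_{\min}\leq \ell_j/2$ to get $\inf_{I_j}f''_n\gtrsim\ell_j^{-(3-\eta)}$ and then sum $\ell_j^{3-\eta}\leq\ell_{\max}^{2-\eta}\ell_j$, whereas the paper uses the uniform estimate $y^n_{\min}\leq\ell_{\max}/2\lesssim 1/q_s$ to obtain a single bound $\inf f''_n\gtrsim q_s^{3-\eta}\geq n^{3-2\eta}$, leading to $|W_n^c\cap I|\leq 2n^{-(1+2\eta)}$ for every arc and a total of $\leq 2n^{-2\eta}$. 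Your refinement actually yields the slightly stronger $n^{-3\eta}(\log n)^{O(1)}$, but both comfortably give the claimed $Cn^{-3\eta/2}$, so there is no substantive gain — it is the same argument in a different dressing.

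One subtlety that you (like the paper, to be fair) leave implicit: the estimate $\sum_j|W_n^c\cap I_j|\leq Cn^{2-4\eta}\ell_{\max}^{2-\eta}$ is a bound on the \emph{Lebesgue} measure of the projection $\{y_0:\,|f'_n(y_0)|<n^{2-4\eta}\}\subset\T$, not directly on $\mu(W_n^c)=\int_{\{|f'_n|<n^{2-4\eta}\}}f\,d\lambda$, and $f$ is unbounded near $0$. Passing to $\mu$ requires the additional observation that every critical point $x_I$ satisfies $|f'(x_{I,\min}^n)|<8q_{s+1}^{2-\eta}$ (by \eqref{koks1}, since $f'_n(x_I)=0$), hence $\|x_I\|\gtrsim 1/q_{s+1}$, so the bad intervals $I_{bad}$ avoid a $\sim 1/q_{s+1}$ neighbourhood of the singularity and $\sup_{I_{bad}}f\lesssim q_{s+1}^{1-\eta}$. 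The resulting extra factor is what the $3\eta/2$ exponent (as opposed to $2\eta$ or $3\eta$) is designed to absorb; without this step the deduction from a Lebesgue bound to a $\mu$-bound does not, in general, follow (H\"older alone gives only $n^{-2\eta^2}$, which is \emph{weaker} than $n^{-3\eta/2}$ for small $\eta$). I would add a sentence to that effect, but I do not consider it a defect of your proposal since the paper's own proof elides the same point.
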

\begin{proof} Let us conduct the proof in the case $n>0$ the case $n<0$ is analogous. Let $s\in \N$ be unique such that $q_s\leq n<q_{s+1}$. Let $I=(a,b]$ be any interval in the partition $\cI$ of  $\T$ given by $\{-i\a\}_{i=0}^{n-1}$. It follows by \eqref{asu},\eqref{asu2}, \eqref{asu3}, that $f_n$ is $C^2$ on $I$. Moreover $\lim_{x\to b^-}f'_n(x)=+\infty$ and $\lim_{x\to a^+}f'_n(x)=-\infty$. Hence there exists $x_I\in I$ such that $f'_n(x_I)=0$. Then  for $x\in I$ there exists $\th_x\in I$ such that
\begin{equation}\label{c2f}
|f'_n(x)|=|f'_n(x)-f'_n(x_I)|=|f''_n(\th_x)||x-x_I|.
\end{equation}
Moreover, by Lemma \ref{koksi} $|f''_n(\th_x)|\geq q_s^{3-\eta}\geq n^{3-2\eta}$ (the last inequality by diophantine assumptions on $\a$). Let $I_{bad}:=[-\frac{1}{n^{1+2\eta}}+x_I,x_I+\frac{1}{n^{1+2\eta}}]
$.
Then by \eqref{c2f}
$$
 (W^c_n\cap I)\subset I_{bad}.
$$

So
$$ 
W^c_n\subset \bigcup_{I\in \cI}I_{bad}^f;
$$
and therefore  
$\mu(W^c_n)\leq Cn^{-\frac{3\eta}{2}}$.
\end{proof}
Now we can give the
\begin{proof}[Proof of Proposition \ref{prob}]
The proof uses some ideas of the proof of Theorem 1. in \cite{Lyo}.
We will use the following simple lemma:
\begin{lemma}\label{lio}\cite{Lyo} If $Y_n$ are random variables such that $\sum_{n\geq 1} \|Y_n\|_2^2<\infty$, then $Y_n\to 0$ a.s.
\end{lemma}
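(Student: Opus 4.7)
\medskip

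\noindent\textbf{Proof plan for Lemma \ref{lio}.} The plan is to combine a Chebyshev-type tail bound with the Borel--Cantelli lemma; alternatively (and perhaps more cleanly) one can argue via monotone convergence applied to $\sum_n Y_n^2$.

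\smallskip

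First, I would deduce the summability statement $\sum_n Y_n^2 < \infty$ almost surely. By Tonelli's theorem applied to the non-negative random variables $Y_n^2$, one has
\[
\mathbb{E}\left[\sum_{n\geq 1} Y_n^2\right] \;=\; \sum_{n\geq 1}\mathbb{E}[Y_n^2] \;=\; \sum_{n\geq 1}\|Y_n\|_2^2 \;<\;\infty,
\]
so the random variable $\sum_n Y_n^2$ is integrable and in particular almost surely finite. Since the general term of a convergent series of non-negative numbers tends to $0$, one has $Y_n^2 \to 0$ almost surely, and hence $Y_n \to 0$ almost surely.

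\smallskip

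If one prefers to avoid exchange of sum and expectation, the alternative route is Chebyshev plus Borel--Cantelli: for each $\eps > 0$, Chebyshev's (Markov's) inequality gives
\[
\mathbb{P}(|Y_n| > \eps) \;\leq\; \frac{\|Y_n\|_2^2}{\eps^2},
\]
so $\sum_n \mathbb{P}(|Y_n|>\eps) < \infty$. Borel--Cantelli then yields $\mathbb{P}(|Y_n|>\eps \text{ infinitely often})=0$. Applying this for $\eps = 1/k$ with $k\in\N$ and taking the intersection over $k$ of the resulting full-measure sets produces a full-measure event on which $Y_n\to 0$.

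\smallskip

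There is no genuine obstacle here: the lemma is a standard consequence of summability of second moments, and either of the two approaches above will work in two or three lines. I would favour the Tonelli argument, since it avoids introducing an auxiliary parameter $\eps$ and makes the dependence on the $L^2$ norms transparent.
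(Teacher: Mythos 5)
The paper does not prove this lemma; it cites it directly from \cite{Lyo} and uses it as a black box. Both of your arguments are correct and complete: the Tonelli/monotone-convergence route (integrability of $\sum_n Y_n^2$ forces a.s.\ finiteness, hence $Y_n^2\to 0$) and the Chebyshev--Borel--Cantelli route each establish the claim in full, and the first is indeed the cleaner of the two.
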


Let us denote $X_i:=\chi_{W^c_i}$. 
Notice that by Lemma \ref{smd}, we have   
$$
\left\|\frac{1}{|M|^{1-\frac{\eta}{10}}}\sum_{i=0}^{M-1}X_i\right\|_2^2\leq M^{-\frac{4\eta}{5}}.
$$
Therefore there exists a sequence $(N_k)$, $|N_{k+1}-N_k|\leq |N_k|^{1-\frac{\eta}{5}}$ such that
$$
\sum_{k\geq 1}\left\|\frac{1}{|N_k|^{1-\frac{\eta}{5}}}\sum_{i=0}^{N_k-1}
X_i\right\|_2^2< +\infty.
$$
By Lemma \ref{lio} we get $\frac{1}{|N_k|^{1-\frac{\eta}{5}}}\sum_{i=0}^{N_k-1}X_i\to 0$ a.s. 
as $k\to +\infty$. Let $k\in \N$ be unique such that $N_k\leq M<N_{k+1}$. Then
\begin{multline*}
\left|\frac{1}{|M|^{1-\frac{\eta}{10}}}\sum_{i=0}^{M-1}X_i\right|\leq 
\left|\frac{1}{|N_k|^{1-\frac{\eta}{10}}}\sum_{i=0}^{N_k-1}X_i\right|+\max_{0\leq s\leq N_{k+1}-N_k}\left|\frac{1}{|N_k|^{1-\frac{\eta}{10}}}
\sum_{i=N_k+1}^{N_k+s}X_i\right|\leq\\
\left|\frac{1}{|N_k|^{1-\frac{\eta}{10}}}\sum_{i=0}^{N_k-1}X_i\right|+
\frac{|N_{k+1}-N_k|}{|N_k|^{1-\frac{\eta}{10}}},
\end{multline*}
which finishes the proof since $|N_{k+1}-N_k|<|N_k|^{1-\frac{\eta}{5}}$.
\end{proof}

\paragraph{Proof of Lemma \ref{con}}

\begin{proof} 
Without loss of generality, we may assume $\vphi$ is strictly positive (we may replace $\vphi$ with a cohomologous function which is strictly positive, if necessary). Denote $\delta=\eta/10$.
Let $V_M:=\{t\in [-M,M]\;:\; y\not\in W_{N(y,t)}\}$. We will show the following:
\begin{equation}\label{lamt}
\lambda(V_M)<M^{1-2\delta^3}.
\end{equation}
Notice that \eqref{eq:con} follows by \eqref{lamt}. Indeed, we have
$$\{i\in[0,M]\;:\; y\not\in W_{N(y,\vphi_i(x))}\}\subset \{i\in [0,M]\;:\; \vphi_i(x)\not\in V_M\}\cup \{i\in [0,M]\;:\; |\vphi_i(x)|\geq M\}.$$
The set $V_M$ is a union of disjoint intervals of length $\geq \inf_\T f$ (since $N(y,t)$ is piecewise constant). For each such interval $I$ we have 
$$
\left|\{i\in [0,M]\;:\; \vphi_i(x)\in I\}\right|\leq c|I|+1
$$
(for  a constant $c$ such that $\vphi>\frac{1}{2c}$). So by \eqref{lamt}
$$\left|\{i\in [0,M]\;:\; \vphi_i(x)\not\in V_M\}\right|\leq cM^{1-2\delta}.
$$
Moreover, since $|\vphi_M(x)-M|\leq M^{2/3}$ and $\vphi>\frac{1}{2c}$ we have
$$
\left|\{i\in [0,M]\;:\; \vphi_i(x)\geq M\}\right|\leq cM^{2/3}.
$$
This gives \eqref{eq:con}. So it remains to show \eqref{lamt}.\\
\\
Since $|f_M(x)-M|\leq M^{1-4\delta}$ and $\inf_\T f>0$ it is enough to show that for 
$$V_{f,y}:=\{t\in [0,f_M(y)]\;:\;y\in W_{N(y,t)}\}
$$ 
we have 
$$
\lambda(V_{f,y})<M^{1-\delta^3}. 
$$
Divide the interval $[0,f_M(x)]$ into intervals of length $q_k$, where $q_k$ is the denominator closest to $M^{\delta^2}$ (notice that the endpoints of the intervals are integers).  Denote these intervals by $[N_j,N_{j+1}]$, $j=0,..., \left[\frac{f_M(x)}{q_k}\right]-1$. Notice that by \eqref{ase}, for  at least  $\left[\frac{f_M(y)}{q_k}\right]-M^{1-2\delta}$ of j's we have
\begin{equation}\label{coh}
[N_j,N_{j+1}]\cap \{i\in[0,M]\;:\; y\in W_i\}=\emptyset
\end{equation}
For any such $j$ let $T_j$ be such that $N(y,T_j)=N_j$. Then by \eqref{coh}, 
$$
[T_j,T_{j+1}]\cap V_{f,y}=\emptyset.
$$
By definition $N_{j+1}-N_j=q_k$. Therefore and by \eqref{koks0}, we have 
$$|T_{j+1}-T_j|\geq f_{N(y,T_{j+1}-T_j)}(y)=f_{N_{j+1}-N_j}(y)=f_{q_k}(y)\geq  q_k-2q_k^{1-\eta}.
$$

So  finally
$$
\lambda(V_{f,y})\leq f_M(y)- \left(\left[\frac{f_M(y)}{q_k}\right]-M^{1-2\delta}\right)(q_k-2q_k^{1-\eta})\leq
M^{1-\delta^3},
$$
which finishes the proof.
\end{proof}

\section*{Appendix: Proof of Lemma \ref{generatingpartition}}

By Krieger generator theorem and inducing as necessary we may assume that $K$ is the special flow over a subsystem (with respect to some invariant measure) of a full shift $S$ on $2$ symbols with a roof function $f$ such that $\left|f-\int f\right|<\epsilon$ for some $\epsilon$ small enough (say $\epsilon<\frac{\int f}{100}$). That is, there is a measure $\nu_0$ on $\Sigma_2$, a function $f:\Sigma_2\to(0,\infty)$, $\left|f-\int fd\nu_0\right|<\epsilon$ for $\epsilon<\frac{\int fd\nu_0}{100}$, such that $$M=\{(w,s)\;:\;w\in\Sigma_2\;\mbox{and}\; 0\leq s<f(w)\} \;\;\;\;\;\;\;\;\mbox{and}\;\;\;\;\;\;\;\;K_t(w,s)= (w,t+s)$$ where we use the identification as in formula (\ref{FlowSpace}). As in formula (\ref{D-C}) we get for $(w,s)\in M$,

$$K_t(w,s) = (S^{N(w,s,t)}(w) , t+s-   f_{N(w,s,t)} (w))$$ 
   where $N(w,s,t)$ is the unique integer such that 

\begin{equation*}
0 \leq  t+s-   f_{N(w,s,t)} (w) \leq f(S^{N(w,s,t)}(w)),\end{equation*}    
and
$$
f_n(y)=\left\{\begin{array}{ccc}
f(w)+\ldots+f(S^{n-1}(w)) &\mbox{if} & n>0\\
0&\mbox{if}& n=0\\
-(f(S^n(w))+\ldots+f(S^{-1}(w)))&\mbox{if} &n<0.\end{array}\right.$$

Inducing again if necessary we may assume that $\int f \, d\nu_0$ is large enough so that for every $x\in\Sigma_d$ and $w\in \Sigma_2$, $|\varphi(x)|<f(w)$. This implies that 
\begin{equation}
\label{shortjump}
|N(w,s,\varphi(x))|\leq 1\;\;\;\;\;\;\;\mbox{and}\;\;\;\;\;\;\;|N(w,s,-\varphi(S^{-1}x))|\leq 1
\end{equation}
for every $(x,(w,s))\in\Sigma_d\times M$. Moreover, if $\varphi(x)\geq 0$ then $N(w,s,\varphi(x))\geq 0$ and if $\varphi(x)\leq 0$ then $N(w,s,\varphi(x))\leq 0$. Let $\hat P_2$ be the partitions into cylinders of $\Sigma_2$ centered at $0$ and length $3$ (one to the past and one to the future) and let $Q$ be the induced partition on $M$ with time atoms of size $1/H$ for $H$ large but $\frac{1}{H}>10\epsilon$, i.e. let $H\geq 0$ be large and $L\geq 0$ be integers such that $\frac{L}{H}<\int fd\nu_0-\epsilon<\int fd\nu_0+\epsilon<\frac{L+1}{H}$ (we assume here that $L$ is also very large). Then $(w,s)$ is in the same atom of $(w',s')$ if and only if the $-1, 0, 1$ positions of $w$ and $w'$ coincide and $s,s'\in[\frac{i}{H},\frac{i+1}{H})$ for $i\leq L$ or $s,s'\geq \frac{L}{H}$, in particular, if $(w,s)$ is in the same atom as $(w',s')$ then $|s-s'|<1/H$.

Assume now that $(x,(w,s))$ and $(x',(w',s'))$ are in the same atom of $\displaystyle \bigvee_{i=-\infty}^\infty T^i(P_d\times Q)$. Then $x=x'$ since $P_d$ is the partition into cylinders. Furthermore, since $T$ is a skew-product, for $i>0$ 
\begin{eqnarray*}
K_{\varphi_i(x)}(w,s)&=:&(w^{(i)},s^{(i)})=K_{\varphi(S^{i-1}x)}(w^{(i-1)},s^{(i-1)})\\
&=&(S^{N(w^{(i-1)},s^{(i-1)},\varphi(S^{i-1}x))}w^{(i-1)},s^{(i-1)})
\end{eqnarray*} lies in the same atom as 
\begin{eqnarray*}
K_{\varphi_i(x)}(w',s')&=:&(w'^{(i)},s'^{(i)})=K_{\varphi(S^{i-1}x)}(w'^{(i-1)},s'^{(i-1)})\\
&=&(S^{N(w'^{(i-1)},s'^{(i-1)},\varphi(S^{i-1}x))}w'^{(i-1)},s'^{(i-1)}),
\end{eqnarray*}
and similarly with $i<0$. By the choice of $\epsilon$ and $H$ and equation (\ref{shortjump}) (and its comment below), we have that for every $i$, if $(w,s)$ and $(w',s')$ lie in the same atom and $x\in\Sigma_d$ then $$|N(w,s,\varphi(x))-N(w',s',\varphi(x))|\leq 1.$$ Hence, in case $\varphi(x)\geq 0$, either $$N(w,s,\varphi(x))=N(w',s',\varphi(x))\;\;\;\;\;\mbox{or}\;\;\;\;\; N(w,s,\varphi(x))=1,\;N(w',s',\varphi(x))=0$$ or viceversa. 

\noindent{\bf Claim.} 
We use the notations above. If 
$(w,s)$ and $(w^{(1)},s^{(1)})$ are on the same atom of $Q$ as $(w',s')$ and $(w'^{(1)},s'^{(1)})$ respectively then $$N(w,s,\varphi(x))=N(w',s',\varphi(x)).$$


\begin{proof}[Proof of Claim.]
Assume that $\varphi(x)>0$, the case $\varphi(x)<0$ is analogous. Assume by contradiction that $$N(w,s,\varphi(x))=1,\;\;\;\mbox{and}\;\;\;N(w',s',\varphi(x))=0,$$ i.e. $$0\leq\varphi(x)+s-f(w)<f(Sw)\;\;\;\;\;\mbox{and}\;\;\;\;\; 0\leq \varphi(x)+s'<f(w').$$ Then $s^{(1)}=\varphi(x)+s-f(w)$ and $s'^{(1)}=\varphi(x)+s'$, so $s'^{(1)}-s^{(1)}=s'-s+f(w)$. Hence $$|s'^{(1)}-s^{(1)}|\geq f(w)-|s'-s|\geq\int f \, d\nu_0-\epsilon-\frac{1}{H}\geq \frac{L-1}{H}-\epsilon\geq \frac{2}{H}$$ contradictiong the choice of $\epsilon$, $L$ and $H$.
\end{proof}
Using the claim we obtain that $$N(w^{(i)}, s^{(i)}, \varphi(S^ix))=N(w'^{(i)}, s'^{(i)}, \varphi(S^ix))$$ for every $i$. Since the jumps of the $N$ are at most $1$ this implies that $w=w'$ and $s,s'$ are on the same time atom. The next claim solves the problem. 

\noindent{\bf Claim.}
There is a set of full measure $\tilde X$ so that if $(x,(w,s))$ and $(x',(w',s'))$ are on the same atom of $\displaystyle \bigvee_{i=-\infty}^\infty T^i(P_d\times Q)\cap \tilde X$ then $x=x', w=w$ and $s=s'$. 

\begin{proof}[Proof of Claim] We already showed that $x=x'$ and $w=w'$. Assume that $(x,(w,s))$ and $(x,(w,s'))$ are on the same atom.

$K_{\varphi_i(x)}(w,s)=(S^{N_i}w,\varphi_i(x)+s-f_{N_i}(w))$ where $N_i:=N(w,s,\varphi_i(x))$. By the previous claim we know that $N(w,s,\varphi_i(x))=N(w,s',\varphi_i(x))$ hence we get that $s^{(i)}=\varphi_i(x)+s-f_{N_i}(w)$ and $s'^{(i)}=\varphi_i(x)+s'-f_{N_i}(w)$.

By ergodicity of $T$, there is a set of full measure $\tilde X$ such that if $(x,(w,s))\in\tilde X$ and $\delta>0$ then there is $i$ such that $s^{(i)}\in\left(\frac{1}{H}-\delta,\frac{1}{H}\right)$. Assume by contradiction that $(x,(w,s))$ and $(x,(w,s'))\in\tilde X$ and let $\delta=\frac{s'-s}{2}>0$. Then $s'^{(i)}=s'-s+s^{(i)}>s'-s+\frac{1}{H}-\delta=\frac{1}{H}+\delta$ contradicting $s'^{(i)}$ and $s^{(i)}$ are on the same atom. 
\end{proof}

\bibliographystyle{plain}
\bibliography{skew-prods}

\end{document}